\theoremstyle{plain}
\newtheorem{theorem}{Theorem}[section]
\newtheorem{corollary}[theorem]{Corollary}
\newtheorem{lemma}[theorem]{Lemma}
\newtheorem{proposition}[theorem]{Proposition}
\theoremstyle{definition}
\newtheorem{definition}[theorem]{Definition}
\newtheorem{example}[theorem]{Example}
\newtheorem{remark}[theorem]{Remark}
\newcommand{\begintheorem}{\addtocounter{equation}{1}\begin{theorem}}
\newcommand{\beginlemma}{\addtocounter{equation}{1}\begin{lemma}}
\newcommand{\beginproposition}{\addtocounter{equation}{1}\begin{proposition}}
\newcommand{\begindefinition}{\addtocounter{equation}{1}\begin{definition}}
\newcommand{\begincorollary}{\addtocounter{equation}{1}\begin{corollary}}
\begin{document}
\title{ On pseudo weakly compact operators of  order $ P$}
\author{M. Alikhani}
\address{$^{a}$Department of Mathematics, University of Isfahan, Isfahan, Iran \newline}
\thanks{
 e-mail: {\sf m2020alikhani@yahoo.com}, 
   } \address[]{}

\begin{abstract} 
In this paper, we introduce the concept of a pseudo weakly compact operator of order $ p $ 
between  Banach spaces.\ Also we study  the notion of  $ p $-Dunford-Pettis relatively compact property   which is in ``general"  weaker than the
Dunford-Pettis relatively compact property and 
gives some characterizations of  Banach spaces which have this property.\ Moreover,  by
using  the notion of  $ p $-Right subsets of a dual Banach space, we study the concepts of $ p $-sequentially Right  and weak $ p $-sequentially Right properties   on Banach spaces.\ Furthermore, we obtain some suitable conditions on Banach spaces  $  X$ and
$ Y $ such that projective tensor and injective tensor products between $ X $ and $ Y $ have the $ p $-sequentially Right  property.\ Finally, we introduce  two  properties for the Banach spaces, namely  $ p $-sequentially  Right$ ^{\ast} $ and weak $ p $-sequentially  Right$ ^{\ast} $ properties and obtain some characterizations of these properties.\
\end{abstract}
\maketitle
{\bf keyword}: Dunford-Pettis relatively compact property, pseudo weakly compact operators, sequentially Right property.\
\section{ Introduction}
The notion of Dunford-Pettis set was introduced by Andrews \cite{An} as follows: ``
A bounded subset $ K $ of a Banach space $ X$ is called Dunford-Pettis, if  every weakly compact operator from $ X $ to an arbitrary Banach space $ Y$ maps $ K$ onto a relatively norm compact set of  $ Y $''.\ Equivalently, a bounded 
subset $ K$ of a Banach space $ X $ is a Dunford-Pettis set if and only if
every weakly null sequence $(x^{\ast}_{n})_n $ in $ X^{\ast}, $ converges uniformly to zero on  the set $K;$ that is,
$$\displaystyle\lim_{n\rightarrow\infty}\sup_{x\in K}\vert x^{\ast}_{n}( x)\vert=0,$$\
where $ X^{\ast} $ is a dual space of $ X. $\
It is clear that the class of  Dunford-Pettis sets  strictly contains  the class of relatively compact sets.\ But in general the converse is not true.\ For example, the closed unit ball $ B_{c_{0}} $ is a Dunford-Pettis set in $ c_{0} ,$ while it is not relatively compact.\
The concept of  Dunford-Pettis relatively compact property on Banach spaces presented by Emmanuele \cite{e1}
 as follows: `` A Banach space $ X $ has the Dunford-Pettis relatively compact property (in short $ X\in(DPrcP) $), if every Dunford-Pettis subset of $ X$ is relatively compact".\ For instance, reflexive spaces and Schur spaces (i.e., weak and norm convergence of sequences in $ X $ are coincide) have the $ (DPrcP). $\
Recently, Wen and Chen \cite{w} introduced the concept of Dunford-Pettis completely continuous operators between two arbitrary Banach spaces $ X $ and $ Y$ and obtained some properties of this concept related to some well-known classes of
operators and especially, related to the  Dunford-Pettis relatively compact  property 
of a space $  X$ or $ Y. $\ A bounded linear operator $ T:X\rightarrow Y $ is Dunford-Pettis completely continuous, if carry
Dunford-Pettis and weakly null sequences to norm null ones.\ The class of  Dunford-Pettis completely continuous operators from $ X $ to $ Y$ is denoted
by $ DPcc(X, Y ) .$\\
Peralta et al. \cite{pvwy}, proved
that for a given Banach space $ X $ there is a locally convex topology on $ X, $ which is
called  the`` Right topology ", such that a linear map $ T$ from $ X $ into
a Banach space $ Y$ is weakly compact if and only if it is  Right-to-norm sequentially continuous.\ 
Also, they introduced
the concepts of pseudo weakly compact operators and sequentially Right property on Banach spaces as follows:
\begin{itemize}
\item A  bounded linear $ T $ from a Banach space $ X $ to a Banach space $ Y $ is called pseudo weakly compact, if it transforms Right-null sequences into norm-null sequences; in the other words, when $ (x_{n})_{n}\subset X $ in the Right topology converge to zero, then $ \Vert T(x_{n})  \Vert\rightarrow 0. $\ The class of  pseudo weakly compact operators is denoted by $ PwC(X, Y ) .$\
\item A Banach space $ X $ has the sequentially Right property (in short $ X\in (SR) $), if every pseudo weakly compact
operator $ T $ from $ X $ to a Banach space $ Y $ is weakly compact.
\end{itemize} 

Later on  Kacena \cite{ka} by introducing the notion of  Right set in $ X^{\ast}, $  showed that a Banach space $ X $ has the sequentially Right property if and only if every Right subset of $ X^{\ast} $ is relatively weakly compact.\
A bounded subset $ K $ of $ X^{\ast} $ is a Right set, if every Right-null sequence $ (x_{n})_{n}$ in $ X $ converges uniformly to zero on $ K;$ that is,
$$\displaystyle\lim_{n\rightarrow\infty}\sup_{x^{\ast}\in K}\vert x^{\ast}( x_{n})\vert=0.$$\
Retbi and Wahbi \cite{rw}, introduced
the concepts of $( L) $-Dunford-Pettis sets and $ (L)$-Dunford-Pettis property as follows:\
A bounded subset $ K $ of $ X^{\ast} $ is called an $(L)$-Dunford-Pettis set, if every weakly null sequence $ (x_{n})_{n} $ whose the corresponding set of its a Dunford-Pettis set in $ X $ converges uniformly to zero on $ K;$ that is,
$$\displaystyle\lim_{n\rightarrow\infty}\sup_{x^{\ast}\in K}\vert x^{\ast}( x_{n})\vert=0.$$\
A Banach space $ X $ has the $( L )$-Dunford-Pettis property, if every $ (L) $-Dunford-Pettis subset of $ X^{\ast} $ is relatively weakly compact.\ Recently, Cilia and  Emmanuele in \cite{ce1} and  Ghenciu in \cite{g8}
 obtained a  characterization for Right null sequences.\ In fact, they showed that a sequence $ (x_{n})_{n} $ in a Banach space $  X$ is Right null if and only if it is Dunford-Pettis and weakly null.\
Hence, 
from the above observations,  we get the following results:\
\begin{itemize}
\item A bounded subset $ K $ of  $ X^{\ast} $ is a  Right set   if and only if it is an $(L)$-Dunford-Pettis set.\
\item The class of  Dunford-Pettis completely continuous operators and the class of pseudo weakly compact
operators between Banach spaces coincide.\
\item A Banach space  $ X $ has the $ (SR) $ property if and only if it has the $( L) $-Dunford-Pettis property.\
\end{itemize}
Recently, Ghenciu  \cite{g9} introduced
the concepts  of Dunford-Pettis $  p$-convergent operators, $ p $-Dunford-Pettis relatively compact property (in short $ p $-$ (DPrcP) $), $ p $-Right sets and $ p $-sequentially Right property ( in short $ p $-$ (SR) $) as follows:
\begin{itemize}
\item  A bounded linear operator $T:X\rightarrow Y $ is called Dunford-Pettis $  p$-convergent, if it takes Dunford-Pettis weakly $ p $-summable sequences to norm null sequences.\ 
\item  A Banach space $ X$ has the $ p $-Dunford-Pettis relatively compact property (in short $ p $-$ (DPrcP) $), if every  Dunford-Pettis weakly p-summable sequence $ (x_{n})_{n} $ in $ X $ is norm null.\ 
\item A  bounded subset $ K $ of the dual space $ X^{\ast} $ is called a $ p$-Right set, if every Dunford-Pettis weakly $ p$-summable sequence $ (x_{n})_{n} $ in $ X$ converges uniformly to zero on $ K,$ that is, 
$$\displaystyle\lim_{n} \displaystyle\sup_{x^{\ast}\in K}\vert  x^{\ast}( x_{n}) \vert=0.$$\ 
\item A  Banach space  $ X $ has the $ p $-sequentially Right property  ( in short $ p$-$(SR) $), if every $ p $-Right set  in $ X^{\ast}$ is relatively weakly compact.\
\end{itemize}
 Motivated by the above works, 
in Section 2, we introduce the concept of pseudo weakly compact
operators of order $ p, $  which is the extension of pseudo weakly compact
operators and obtain some  characterizations  of these operators.\ 
Then, we study the notion of $ p $-$ (DPrcP) $ which is in “general” weaker than
the Dunford-Pettis relatively compact property and give some characterizations of  Banach spaces which
have this property.\ Also,
it is proved that
under which conditions  the class of  $ p $-convergent operators and the class of  pseudo weakly compact operators of order $ p $
between Banach spaces coincide.\ 

Section 3 is concerned with $ p $-sequentially Right property, which is a generalization of the sequentially Right property.\
In this section, we answer to the following question: ``Under which conditions the class of weakly compact  operators and the class of pseudo
weakly compact operators of order $ p $ between Banach spaces coincide".\
In addition, we investigate the stability of $ p $-sequentially Right property for some subspaces of  bounded linear operators,  projective tensor product and injective tensor product between Banach spaces $ X$ and $Y.$\
Finally, by introducing the notion of weak $ p $-sequentially Right  property, we give an operator characterization from the class of  $ p $-Right sets which
are  weakly precompact.\ 
In the last  section of the present paper, 
motivated by the notions of $ p $-Right sets and
$  p$-sequentially Right property,
 we introduce the  concepts of
$ p $-Right$ ^{\ast} $ sets, $ p $-sequentially Right$ ^{\ast} $ property and weak
$ p $-sequentially Right$ ^{\ast} $ property on Banach spaces and obtain some characterizations of these properties.

In what follows we introduce some notation and notions which will be used
in the sequel.\ Throughout this paper  $ X,Y $ and $  Z$ are arbitrary Banach spaces and $ 1\leq p\leq \infty. $\ We suppose  $p^{\ast}$ is the H$\ddot{\mathrm{o}}$lder conjugate of $p;$ if $ p=1,~~ \ell_{p^{\ast}} $
 plays the role of $ c_{0} .$\ The unit coordinate vector in $ \ell_{p} $ (resp.\ $ c_{0} $ or  $\ell_{\infty} $) is denoted by $ e_{n}^{p} $ (resp.\ $ e_{n} $).\
The space $ X $ embeds in $ Y, $ if $ X $ is isomorphic to a closed subspace of $  Y$ (in short we denote $ X\hookrightarrow Y $).\ We denote two isometrically isomorphic spaces $ X $ and $ Y $ by
$ X\cong Y.$\ Also we use $ \langle  x,x^{\ast}\rangle $ or $ x^{\ast}(x) $
for the duality between $ x\in X $ and $ x^{\ast}\in X^{\ast}. $\ For a bounded linear operator $ T : X \rightarrow Y, $ the adjoint of the operator $ T $
is denoted by $ T^{\ast}. $\  The space of all  bounded linear operators, weakly
compact operators, and compact operators from $  X$ to $ Y $ will be denoted by
$ L(X, Y ), W(X, Y ),  $ and $ K(X, Y ) ,$ respectively.\
We refer the reader for undefined terminologies to the
classical references \cite{AlbKal, di,du, di1,djt}.\ 
A  sequence $ (x_{n})_{n} $ in $ X $ is called weakly $ p $-summable, if $ (x^{\ast}(x_{n}))_{n} \in \ell_{p}$ for each $ x^{\ast}\in X^{\ast} $ \cite{djt}.\ We denote the set of all weakly $ p $-summable sequences in $ X $ by $ \ell_{p} ^{w}(X).$\
A sequence $(x_{n})_{n}$ in $ X $ is said to be weakly $ p $-convergent to
$ x\in X$ if $ (x_{n} - x)_{n}\in \ell_{p} ^{w}(X).$\ The concept of weakly $  p$-Cauchy sequence 
introduced by Chen et al. \cite{ccl}.\ A sequence $(x_{n})_{n}$ in a Banach space $ X $
is weakly $  p$-Cauchy if for each pair of strictly increasing sequences $(k_{n})_{n}$ and $(j_{n})_{n}$ of positive integers, the sequence $(x_{k_{n}}- x_{j_{n}})_{n}$ is weakly $ p $-summable in $ X .$\ Notice that, every weakly $ p $-convergent sequence is weakly $ p $-Cauchy, and the weakly $ \infty $-Cauchy sequences are precisely the weakly Cauchy sequences.\ 
In what follows we give some concepts which will be used
in the sequel:\
\begin{itemize}
\item A subset $ K $ of a Banach space $ X $ is called relatively weakly $p$-compact, if each sequence in $ K $ admits a weakly $ p$-convergent subsequence with limit in $ X .$\ If the ``limit point” of each weakly $ p $-convergent subsequence lies in $ K, $ then we say that $ K $ is a weakly p-compact set \cite{cs}.\ 
\item A bounded subset $ K $ of $ X^{\ast} $ is a $ p $-$ (V ) $ set, if $ \displaystyle \lim_{n\rightarrow\infty}\displaystyle \sup_{x^{\ast}\in K}\vert x^{\ast}(x_{n}) \vert =0,$
for every weakly $ p $-summable sequence $ (x_{n})_{n} $ in $ X $ \cite{ccl1}.\
\item A Banach space $ X $ has
Pelczy\'{n}ski's  property $ (V) $ of order $  p$ ( $
p$-$ (V) $ property), if every $ p $-$ (V) $ subset of $
X^{\ast}$ is relatively weakly compact \cite{ccl1}.\
\item A bounded subset $ K $ of $ X $ is a $ p $-$ (V^{\ast} ) $ set, if
$  \lim_{n\rightarrow\infty} \sup_{x\in K}\vert x^{\ast}_{n}(x) \vert =0,$
for every weakly $ p $-summable sequence $ (x^{\ast}_{n})_{n} $ in $ X^{\ast}$ \cite{ccl1}.\
\item A bounded linear operator $ T : X \rightarrow  Y  $ is called weakly $ p $-compact, if $ T (B_{X}) $ is a relatively weakly $ p $-compact set in $ Y$ \cite{cs}.\
\item A bounded linear operator $ T : X \rightarrow Y $ is called  weakly limited, if $ T(B_{X}) $ is a Dunford-Pettis set in $ Y $ \cite{w}.\ 
\item  A subset $ K $ of a Banach space $ X $ is called weakly $ p $-precompact,
if every sequence from $  K$ has a weakly $  p$-Cauchy subsequence.\ The weakly $ \infty $-precompact sets are precisely the weakly precompact sets \cite{ccl}.
\item A bounded linear operator $T:X\rightarrow Y$ called $ p$-convergent, if it transforms any weakly $ p $-summable sequence into norm-null sequence.\ The $  \infty$-convergent operators are precisely the completely continuous operators \cite{cs}.
\item A Banach space $ X $ has the $ p $-Schur property (in short $ X\in(C_{p}) $), if every weakly $ p$-summable sequence in $ X $ is norm null.\ It is  clear that, $ X $ has the $ \infty $-Schur property if and only if every weakly null sequence
in $  X$ is norm null.\ So the $ \infty $-Schur property coincides with the Schur property \cite{dm}.
\item A bounded linear operator $ T : X \rightarrow  Y  $ is called  strictly singular, if there is no infinite dimensional subspace $ Z \subseteq X $ such that $ T\vert_{Z} $ is an isomorphism onto its range \cite{AlbKal}.
\item A Banach space $  X$ has the Dunford-Pettis property  (in short $X \in (DPP)$), if for every weakly null sequence $ (x_{n})_n $ in $ X $ and weakly null sequence $ (x^{\ast}_{n})_n $ in $ X^{\ast},$ we have $x^{\ast}_{n}(x_{n})\rightarrow 0$ as $ n\rightarrow\infty$ \cite{di1}.\
\item A Banach space $  X$ has the Dunford-Pettis property of order $p$ (in short $X \in (DPP_{p})$), if for every weakly $ p$-summable sequence $ (x_{n})_n $ in $ X $ and weakly-null sequence $ (x^{\ast}_{n})_n $ in $ X^{\ast},$ we have $x^{\ast}_{n}(x_{n})\rightarrow 0$ as $ n\rightarrow\infty$ \cite{cs}.\

\item A Banach space $ X $ has the $ p $-Gelfand-Phillips property (in short $ p $-$ (GPP) $), if every limited and weakly $ p$-summable sequence in $ X $ is norm null.\  It is  clear that, $ X $ has the $ \infty $-Gelfand-Phillips property if and only if  every limited and weakly null sequence in $ X $ is norm null \cite{fz1}.\
\item 
 
 A bounded linear operator $ T$ from the space of continuous functions defined on a Hausdorff
compact space $  K$ with values in a Banach space $ X $ ( in short $ C(K,X) $), taking values in a Banach space $ Y $ is dominated, if there exists a positive linear functional $ L $ on $  C(K)^{\ast}$ such that $ \Vert T(f) \Vert\leq L(\Vert  f\Vert) , ~~~~ f\in C(K,X)  $ \cite{e1}.
\end{itemize}
\section{ $ p $-Dunford-Pettis relatively compact property}

Here, we introduce the concept of pseudo weakly compact operators of order $ p $ and obtain some characterizations of a Banach space with the  $ p $-$ (DPrcP).  $\ The main goal of this section is to answer to the following question:`` Under which conditions every  dominated operator $ T:  C(K,X)\rightarrow Y$
is $ p $-convergent?".\

\begin{definition}\label{d1}
$ \rm{(i)} $ A bounded linear  operator $ T:X\rightarrow Y $ is called pseudo weakly compact  of order
$  p,$ if $T$ carries  Dunford-Pettis weakly $ p $-compact subset of $ X $ to relatively norm compact in $ Y. $\\
$ \rm{(ii)} $ A sequence $ (x_{n})_{n}$ in a Banach space $ X $ is $ p $-Right null, if $(x_{n})$ is  Dunford-Pettis weakly $ p $-summable.\\
$ \rm{(ii)} $ A sequence $ (x_{n})_{n}$ in a Banach space $ X $ is $ p $-Right Cauchy, if $(x_{n})$ is  Dunford-Pettis weakly $ p $-Cauchy.\
\end{definition}
The class of  pseudo weakly compact  operators of order $ p $ from  $ X $ into $ Y $ is denoted by $ PwC_{p}(X,Y). $\ It is clear that 
if $ 1 \leq p_{1} < p_{2}\leq \infty, $ then $  PwC_{p_{2}}(X,Y)\subseteq  PwC_{p_{1}}(X,Y). $\ In particular, $ PwC(X,Y)\subseteq  PwC_{p}(X,Y). $\ It is clear that the class $  PwC_{p}(X,Y)$ is a closed linear subspace of $ L(X, Y ), $ which has the ideal property, that is, for each $ T\in PwC_{p}(X,Y)$  and each two bounded linear operators $  R$ and $ S, $ which can be composed with
$ T, $ one has that $ R\circ T\circ S $ is also a  pseudo weakly compact  operator.\\
We begin with a simple, but extremely useful, characterization of  pseudo weakly compact operators  of order $ p. $ 
\begin{theorem}\label{t1} Let $ T : X \rightarrow Y $  be a bounded linear operator.\ The following statements  are equivalent:
$ \rm{(i)} $ $  T\in PwC_{p}(X,Y),$\\
$ \rm{(ii)} $ $ T $ maps $ p $-Right null  sequences onto norm null sequences,\\
$ \rm{(iii)} $ $  T$ maps  $ p $-Right Cauchy sequences onto norm convergent sequences.
\end{theorem}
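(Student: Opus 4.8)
The plan is to prove the cycle of implications $(\mathrm{i})\Rightarrow(\mathrm{iii})\Rightarrow(\mathrm{ii})\Rightarrow(\mathrm{i})$, after first isolating two elementary facts that carry all the weight. First I would record that the family of Dunford-Pettis subsets of a Banach space is stable under passing to subsets, under finite unions, and under Minkowski differences $A-B$; each of these is immediate from the sequential description of Dunford-Pettis sets recalled in the introduction, since $\sup_{x\in A-B}|x_n^{\ast}(x)|\le \sup_{x\in A}|x_n^{\ast}(x)|+\sup_{x\in B}|x_n^{\ast}(x)|$ for every weakly null $(x_n^{\ast})_n\subset X^{\ast}$. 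Second, a weakly $p$-summable sequence $(x_n)_n$ is, by definition, weakly $p$-convergent to $0$ and is weakly null; hence every subsequence is again weakly $p$-convergent to $0$, and the standard argument (as for a norm-convergent sequence together with its limit) shows that $\{x_n:n\}\cup\{0\}$ is a weakly $p$-compact set. Combining these: if $(x_n)_n$ is $p$-Right null then $\{x_n:n\}\cup\{0\}$ is a Dunford-Pettis weakly $p$-compact subset of $X$, and if $(x_n)_n$ is $p$-Right Cauchy and $(k_n)_n,(j_n)_n$ are strictly increasing, then $(x_{k_n}-x_{j_n})_n$ is weakly $p$-summable with range contained in $\{x_n:n\}-\{x_n:n\}$, hence $p$-Right null.

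For $(\mathrm{i})\Rightarrow(\mathrm{iii})$ I would argue by contradiction. Let $(x_n)_n$ be $p$-Right Cauchy and suppose $(Tx_n)_n$ is not norm convergent; since $Y$ is complete this means it is not norm Cauchy, so there are $\varepsilon>0$ and strictly increasing $(k_n)_n,(j_n)_n$ with $\|Tx_{k_n}-Tx_{j_n}\|\ge\varepsilon$ for all $n$. By the preliminary remarks, $\{x_{k_n}-x_{j_n}:n\}\cup\{0\}$ is a Dunford-Pettis weakly $p$-compact subset of $X$, so by $(\mathrm{i})$ its image under $T$ is relatively norm compact and some subsequence $(Tx_{k_{n_\ell}}-Tx_{j_{n_\ell}})_\ell$ converges in norm, say to $z$. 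But $x_{k_{n_\ell}}-x_{j_{n_\ell}}\to 0$ weakly, whence $Tx_{k_{n_\ell}}-Tx_{j_{n_\ell}}\to 0$ weakly and $z=0$, contradicting $\|Tx_{k_{n_\ell}}-Tx_{j_{n_\ell}}\|\ge\varepsilon$.

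For $(\mathrm{iii})\Rightarrow(\mathrm{ii})$: given a $p$-Right null sequence $(x_n)_n$, interleave it with zeros, $w_{2n-1}=x_n$, $w_{2n}=0$. Inserting zeros preserves weak $p$-summability and does not enlarge the range, so $(w_n)_n$ is Dunford-Pettis weakly $p$-summable, in particular weakly $p$-Cauchy, i.e.\ $p$-Right Cauchy; by $(\mathrm{iii})$, $(Tw_n)_n$ converges in norm, and since it contains the null subsequence $(Tw_{2n})_n$ its limit is $0$, so $\|Tx_n\|=\|Tw_{2n-1}\|\to 0$. For $(\mathrm{ii})\Rightarrow(\mathrm{i})$: let $K$ be a Dunford-Pettis weakly $p$-compact subset of $X$ and $(x_n)_n$ any sequence in $K$; weak $p$-compactness yields a subsequence $x_{n_k}\to x$ weakly $p$ with $x\in K$, so $(x_{n_k}-x)_k$ is weakly $p$-summable with range inside $K-\{x\}$ (a Dunford-Pettis set), hence $p$-Right null, and $(\mathrm{ii})$ gives $\|Tx_{n_k}-Tx\|\to 0$. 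Thus every sequence in $T(K)$ has a norm convergent subsequence, i.e.\ $T(K)$ is relatively norm compact and $T\in PwC_p(X,Y)$.

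I expect the only delicate point to be the bookkeeping in the preliminary step: one must check carefully that the auxiliary sequences built by interleaving zeros and by forming differences $x_{k_n}-x_{j_n}$ remain Dunford-Pettis weakly $p$-summable (respectively, weakly $p$-Cauchy), which rests precisely on the stability of Dunford-Pettis sets under subsets, finite unions and Minkowski differences, together with the implications ``weakly $p$-summable $\Rightarrow$ weakly $p$-convergent to $0$'' and ``$\{x_n:n\}\cup\{0\}$ is weakly $p$-compact''. Once these are in place, the three implications are routine subsequence arguments and require no further hypotheses on $X$ or $Y$.
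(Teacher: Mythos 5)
Your proof is correct and rests on exactly the same ingredients as the paper's: that $\{x_n:n\}\cup\{0\}$ is a Dunford--Pettis weakly $p$-compact set when $(x_n)_n$ is $p$-Right null, that differences of subsequences of a $p$-Right Cauchy sequence are $p$-Right null, and that (relative) weak $p$-compactness of $K$ yields weakly $p$-convergent (hence $p$-Right Cauchy) subsequences. You merely traverse the cycle in the opposite order, $(\mathrm{i})\Rightarrow(\mathrm{iii})\Rightarrow(\mathrm{ii})\Rightarrow(\mathrm{i})$ rather than $(\mathrm{i})\Rightarrow(\mathrm{ii})\Rightarrow(\mathrm{iii})\Rightarrow(\mathrm{i})$, using an interleaving-with-zeros step where the paper deduces $(\mathrm{ii})$ directly from $(\mathrm{i})$; the extra bookkeeping you supply on stability of Dunford--Pettis sets under Minkowski differences is a welcome but inessential elaboration of what the paper leaves implicit.
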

\begin{proof}
(i) $ \Rightarrow $ (ii)
 Let $ (x_{n})_{n} $  be a $ p $-Right null sequence in $ X $ and $ K:=\lbrace x_{n}: n\in \mathbb{N} \rbrace \cup \lbrace 0\rbrace. $\ It is clear that $ K $ is a Dunfotd-Pettis weakly $ p $-compact set in $ X. $\  Since $ T: X\rightarrow Y $ is pseudo weakly compact of order $ p,~ $ $T(K)$ is relatively norm compact in $ Y, $ and so $ ( T(x_{n}))_{n} $ is norm convergent to zero.\\
(ii) $ \Rightarrow $ (iii) Let
$ (x_{n})_{n} $ is a weakly $ p $-Right Cauchy sequence in $ X. $\ Therefore for any two subsequences $  (a_{n})_{n}$ and $ (b_{n})_{n} $ of $ (x_{n})_{n}, $
$(a_{n}-b_{n})_{n}$ is a $ p $-Right null sequence in $ X. $\ So, $ \rm{(ii)} $ implies that
 $ (T(a_{n})-T(b_{n}))_{n} $ is a norm null sequence in $ Y. $\ Hence, $ ( T(x_{n}))_{n} $ is norm convergent.\\
(iii) $ \Rightarrow $ (i)
Suppose that
$ K $ is a Dunford-Pettis weakly $ p $-compact subset of $  X.$\ Let $ (y_{n})_{n} $ be a sequence in $ T(K). $\ Therefore there is a sequence $ (x_{n})_{n}\subseteq K $ such that $ y_{n}=T(x_{n}) ,$ for each $ n\in \mathbb{N} .$\  Since $ K $ is a  weakly $ p $-compact set,
$ (x_{n})_{n} $ has a weakly $ p $-Cauchy subsequence.\ Without
loss of generality we can assume that $ (x_{n})_{n} $ is a $ p $-Right Cauchy sequence.\ Therefore by $ \rm{(iii)} ,$
$ (T(x_{n}))_{n} $ is norm-convergent
in $ Y.$\ Hence $ T(K) $ is relatively  norm compact.\
\end{proof}

Note that Theorem \ref{t1} shows that a bounded liner operator $ T:X\rightarrow Y $ is pseudo weakly compact  of order $  p$ if and only if $ T $ is Dunford-Pettis $  p$-convergent.\ In this note, we use the
terminology  pseudo weakly compact  operators  of order $ p $ instead of  Dunford-Pettis $ p $-converging operators.\
\begin{corollary}\label{c1}
$ \rm{(i)} $ Let   $ T : X \rightarrow Y $ be a bounded linear operator.\ If $ T^{\ast\ast}\in PwC_{p}(X^{\ast\ast},Y^{\ast\ast}) ,$ then 
$ T \in PwC_{p}(X,Y). $\\
$ \rm{(ii)} $ If $ X^{\ast\ast} $ has the $ p $-$ (DPrcP), $ then $ X $ has the same property.\
\end{corollary}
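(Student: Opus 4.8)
The plan for part (i) is to unwind the definition via Theorem \ref{t1}: it suffices to show that $T$ maps every $p$-Right null sequence to a norm-null sequence. So suppose $(x_n)_n$ is a $p$-Right null sequence in $X$, i.e.\ Dunford-Pettis and weakly $p$-summable. The first step is to verify that the canonical image $(\hat{x}_n)_n$ in $X^{\ast\ast}$ is again $p$-Right null. That $(\hat{x}_n)_n$ is weakly $p$-summable is immediate, since for $x^{\ast\ast\ast}\in X^{\ast\ast\ast}$ one has $x^{\ast\ast\ast}(\hat{x}_n)$ controlled by the restriction of $x^{\ast\ast\ast}$ to $X$, hence lies in $\ell_p$; and the canonical embedding $X\hookrightarrow X^{\ast\ast}$ is an isometry. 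That $(\hat{x}_n)_n$ is a Dunford-Pettis set in $X^{\ast\ast}$ follows because a weakly null sequence $(\Phi_n)_n$ in $X^{\ast\ast\ast}$ restricts to a weakly null sequence $(\Phi_n|_X)_n$ in $X^{\ast}$ (weak-to-weak continuity of the restriction map $X^{\ast\ast\ast}\to X^{\ast}$, which is the adjoint of the embedding), and then $\sup_n|\Phi_n(\hat{x}_n)| = \sup_n|(\Phi_n|_X)(x_n)| \to 0$ because $\{x_n\}$ is Dunford-Pettis in $X$. Having established this, apply the hypothesis $T^{\ast\ast}\in PwC_p(X^{\ast\ast},Y^{\ast\ast})$ together with Theorem \ref{t1} to conclude $\|T^{\ast\ast}\hat{x}_n\|\to 0$. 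Since $T^{\ast\ast}\hat{x}_n = \widehat{Tx_n}$ and the embedding $Y\hookrightarrow Y^{\ast\ast}$ is isometric, $\|Tx_n\|\to 0$, which is what we needed.

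For part (ii), the argument is the standard localization to the identity operator. Recall that a Banach space $Z$ has the $p$-$(DPrcP)$ precisely when the identity $\mathrm{id}_Z$ lies in $PwC_p(Z,Z)$ — indeed by Theorem \ref{t1} this says every $p$-Right null sequence in $Z$ is norm null, which is exactly the defining condition of the $p$-$(DPrcP)$. So if $X^{\ast\ast}$ has the $p$-$(DPrcP)$, then $\mathrm{id}_{X^{\ast\ast}}\in PwC_p(X^{\ast\ast},X^{\ast\ast})$; but $\mathrm{id}_{X^{\ast\ast}} = (\mathrm{id}_X)^{\ast\ast}$, so part (i) applied to $T=\mathrm{id}_X$ gives $\mathrm{id}_X\in PwC_p(X,X)$, i.e.\ $X$ has the $p$-$(DPrcP)$.

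I do not expect a serious obstacle here; the whole corollary is a formal consequence of Theorem \ref{t1} and the elementary behavior of the canonical embeddings under biadjoints. The one point requiring mild care is the claim that a Dunford-Pettis weakly $p$-summable sequence in $X$ remains so when pushed into $X^{\ast\ast}$: the weak$^*$ versus weak topologies on the various duals must not be confused, and the key is simply that the restriction map $X^{\ast\ast\ast}\to X^{\ast}$ is the (norm-one, linear) adjoint of the embedding $X\to X^{\ast\ast}$ and hence weak-to-weak continuous, so it carries weakly null sequences to weakly null sequences. Everything else is bookkeeping with the identities $T^{\ast\ast}\circ \iota_X = \iota_Y\circ T$ and $(\mathrm{id}_X)^{\ast\ast} = \mathrm{id}_{X^{\ast\ast}}$.
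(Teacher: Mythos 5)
Your proof is correct and follows exactly the route the paper intends: the corollary is stated without proof as an immediate consequence of Theorem \ref{t1}, and your argument --- showing that the canonical image of a $p$-Right null sequence is again $p$-Right null (weak $p$-summability via restriction of functionals, the Dunford--Pettis property via weak-to-weak continuity of $\iota_X^{\ast}:X^{\ast\ast\ast}\to X^{\ast}$), then invoking $T^{\ast\ast}\circ\iota_X=\iota_Y\circ T$ and, for (ii), $(\mathrm{id}_X)^{\ast\ast}=\mathrm{id}_{X^{\ast\ast}}$ --- is precisely the standard fill-in. The only blemish is notational: in the Dunford--Pettis step you should write $\lim_{n}\sup_{k}\vert\Phi_{n}(\hat{x}_{k})\vert=0$ rather than $\sup_{n}\vert\Phi_{n}(\hat{x}_{n})\vert\to 0$, but the underlying reasoning is sound.
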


 \begin{theorem}\label{t2} Let   $ T : X \rightarrow Y $ be a bounded linear operator.\ The following statements are equivalent:\\
$ \rm{(i)} $ $  T\in PwC_{p}(X,Y).$\\
$ \rm{(ii)} $ For an arbitrary Banach space $ Z $ and every  weakly limited and weakly $ p $-compact operator  $ S:Z\rightarrow X,$ the
operator $ T\circ S $ is compact.\\
$ \rm{(iii)} $ Same as $ \rm{(ii)} $ with $ Z=\ell_{p^{\ast}} .$
 \end{theorem}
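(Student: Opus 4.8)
The plan is to prove the chain $\mathrm{(i)} \Rightarrow \mathrm{(ii)} \Rightarrow \mathrm{(iii)} \Rightarrow \mathrm{(i)}$. The implication $\mathrm{(ii)} \Rightarrow \mathrm{(iii)}$ is immediate, since $\ell_{p^{\ast}}$ is one particular Banach space, so the real content lies in the other two implications. For $\mathrm{(i)} \Rightarrow \mathrm{(ii)}$, suppose $T \in PwC_p(X,Y)$ and let $S : Z \rightarrow X$ be weakly limited and weakly $p$-compact. Then $S(B_Z)$ is a Dunford-Pettis set (weak limitedness) and a relatively weakly $p$-compact set (weak $p$-compactness) in $X$. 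The point is that the norm closure $K$ of $S(B_Z)$ is then a Dunford-Pettis weakly $p$-compact subset of $X$: being Dunford-Pettis is preserved under norm closure, and relative weak $p$-compactness of $S(B_Z)$ gives that every sequence in $K$ has a weakly $p$-convergent subsequence. Applying the hypothesis that $T$ is pseudo weakly compact of order $p$, we get that $T(K)$ — hence $T(S(B_Z))$ — is relatively norm compact, i.e. $T \circ S$ is compact.

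For $\mathrm{(iii)} \Rightarrow \mathrm{(i)}$, I would use the sequential characterization from Theorem \ref{t1}: it suffices to show that $T$ maps every $p$-Right null sequence to a norm null sequence. So let $(x_n)_n$ be a Dunford-Pettis weakly $p$-summable sequence in $X$. The standard device here is to factor the sequence through $\ell_{p^{\ast}}$ (or through $c_0$ when $p=1$): define $S : \ell_{p^{\ast}} \rightarrow X$ by $S\big((a_n)_n\big) = \sum_n a_n x_n$. Because $(x_n)_n \in \ell_p^w(X)$, this series converges for each $(a_n)_n \in \ell_{p^{\ast}}$ and $S$ is a bounded linear operator with $S(e_n^{p^{\ast}}) = x_n$ (and $S(e_n) = x_n$ in the case $p=1$). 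One then checks that $S$ is weakly $p$-compact — this is the well-known fact that an operator from $\ell_{p^{\ast}}$ is weakly $p$-compact precisely when the image of the unit vector basis is weakly $p$-summable — and that $S$ is weakly limited, because the set $\{x_n : n\}$, being a Dunford-Pettis set, forces $S(B_{\ell_{p^{\ast}}})$ to be a Dunford-Pettis set (a subset of the closed absolutely convex hull of a Dunford-Pettis set is Dunford-Pettis). Then $\mathrm{(iii)}$ applies and $T \circ S$ is compact; since $T\circ S$ is compact and the $e_n^{p^{\ast}}$ converge weakly to $0$ in $\ell_{p^{\ast}}$ (for $1 < p \leq \infty$; for $p=1$ the $e_n$ are weakly null in $c_0$), a compact operator sends this weakly null sequence to a norm null one, so $\|T(x_n)\| = \|(T\circ S)(e_n^{p^{\ast}})\| \to 0$, as required.

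The main obstacle I anticipate is verifying that the factorization operator $S : \ell_{p^{\ast}} \rightarrow X$ is simultaneously weakly limited and weakly $p$-compact, and in particular making the Dunford-Pettis part airtight: one needs that $S(B_{\ell_{p^{\ast}}})$ lies in (the norm closure of) the absolutely convex hull of the Dunford-Pettis set $\{x_n : n \in \mathbb{N}\} \cup \{0\}$, together with the stability of the class of Dunford-Pettis sets under closed absolutely convex hulls. This stability is classical (it follows from the characterization of Dunford-Pettis sets via uniform convergence of weakly null sequences in $X^{\ast}$, since that property is obviously inherited by absolutely convex hulls and by norm closures), but it should be invoked explicitly. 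The edge case $p=1$, where $p^{\ast} = \infty$ and $\ell_{p^{\ast}}$ is replaced by $c_0$, needs a separate sentence but works identically because the unit vector basis of $c_0$ is weakly null and weakly $1$-summable is the same as weakly summable. Everything else is bookkeeping with the sequential characterizations already established in Theorem \ref{t1}.
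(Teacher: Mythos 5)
Your overall route is the same as the paper's: $(i)\Rightarrow(ii)$ by observing $S(B_Z)$ is a Dunford--Pettis weakly $p$-compact set, $(ii)\Rightarrow(iii)$ trivially, and $(iii)\Rightarrow(i)$ by factoring a $p$-Right null sequence through the operator $S:\ell_{p^{\ast}}\rightarrow X$, $S(a)=\sum_n a_n x_n$. The first two implications are fine (your extra care about taking the norm closure of $S(B_Z)$ is a genuine improvement on the paper, which elides it). The problem is exactly the step you flagged as the ``main obstacle'': the containment of $S(B_{\ell_{p^{\ast}}})$ in the closed absolutely convex hull of $\{x_n:n\in\mathbb{N}\}\cup\{0\}$ is \emph{false} whenever $p^{\ast}>1$, i.e.\ for every finite $p$. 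The closed absolutely convex hull consists of the vectors $\sum_n a_n x_n$ with $\sum_n|a_n|\leq 1$, whereas $B_{\ell_{p^{\ast}}}$ only constrains $\bigl(\sum_n|a_n|^{p^{\ast}}\bigr)^{1/p^{\ast}}\leq 1$ (and for $p=1$, where $\ell_{p^{\ast}}$ is $c_0$, only $\sup_n|a_n|\leq 1$); these coefficient sets are vastly different. Worse, the conclusion itself can fail: $S(B_{\ell_{p^{\ast}}})$ need not be a Dunford--Pettis set. Take $X=\ell_2$, $p=p^{\ast}=2$, partition $\mathbb{N}$ into blocks $B_j$ with $|B_j|=N_j\rightarrow\infty$, and set $x_n=N_j^{-1/2}f_j$ for $n\in B_j$, where $(f_j)_j$ is the unit vector basis. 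Then $\sum_n|\langle x^{\ast},x_n\rangle|^2=\|x^{\ast}\|^2$, so $(x_n)_n$ is weakly $2$-summable, and $\|x_n\|\rightarrow 0$, so $\{x_n\}$ is relatively compact, hence a Dunford--Pettis set; thus $(x_n)_n$ is $2$-Right null. But $S\bigl(N_j^{-1/2}\sum_{n\in B_j}e_n\bigr)=f_j$, so $S(B_{\ell_2})$ contains the whole basis $(f_j)_j$ and is not relatively compact, hence (since $\ell_2$ is reflexive) not a Dunford--Pettis set. So this $S$ is not weakly limited, and statement $(iii)$ cannot be applied to it.

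You should know that the paper's own proof commits the identical error: it asserts $S(B_{\ell_{p^{\ast}}})=\{\sum_n\alpha_nx_n:\sum_n|\alpha_n|\leq1\}$, silently replacing the $\ell_{p^{\ast}}$-ball by the $\ell_1$-ball of coefficients, and concludes from this that $S$ is weakly limited. So you have faithfully reconstructed the published argument, including its gap. Repairing $(iii)\Rightarrow(i)$ requires a different device: one must either produce a weakly limited \emph{and} weakly $p$-compact operator whose image of a bounded sequence recovers $(x_n)_n$ (the natural alternative, defining $S$ on $\ell_1$ so that $S(B_{\ell_1})$ is the closed absolutely convex hull and hence Dunford--Pettis, sacrifices weak $p$-compactness of the image), or weaken/modify statement $(iii)$. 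As written, the implication $(iii)\Rightarrow(i)$ is not proved, either by you or by the paper; only $(i)\Rightarrow(ii)\Rightarrow(iii)$ is established.
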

\begin{proof}
(i) $ \Rightarrow $ (ii) Let $ S:Z\rightarrow X $ be a weakly limited and weakly $ p $-compact operator.\ Therefore,  $S(B_{Z})  $ is a Dunford-Pettis weakly $ p $-compact set.\ Hence, $ \rm{(i)} $ implies that $ TS(B_{Z})$  is relatively compact.\\
(ii) $ \Rightarrow $ (iii) It is obvious.\\
(iii) $ \Rightarrow $ (i) Let $ (x_{n})_{n} $ be a $ p $-Right null sequence in $ X. $\ Define the operator $ S : \ell_{p^{\ast}} \rightarrow X$ by
$$ S(\alpha_{1},\alpha_{2},...)=\sum_{n=1}^{\infty} \alpha _{n}x_{n}. $$\
One can see that $ S(B_{\ell_{p^{\ast}}}) =\lbrace   \sum_{n=1}^{\infty} \alpha  _{n}x_{n}:\sum_{n=1}^{\infty}\vert\alpha_{n}\vert \leq1 \rbrace$  is a Dunford-Pettis set in $ X. $\ 
Since $ B_{\ell_{p^{\ast}}} $  is weakly  $ p $-compact \cite{cs}, $S (B_{\ell_{p^{\ast}}})$ is weakly  $ p $-compact.\ Hence, we conclude that  $S(B_{\ell_{p^{\ast}}}) $ is a weakly $ p $-compact set in  $ X. $\ This implies that $ S $ is a weakly limited and weakly $ p $-compact operator.\ 
Hence, $ T\circ S $ is a compact operator and so, $ \lbrace  T(x_{n}):n\in \mathbb{N} \rbrace$ is a compact set.\ \ Furthermore, it is clear that for 
 the canonical basis sequence $ (e^{p^{\ast}}_{n}) $ of $ \ell_{p^{^{\ast}}}, $ the sequence  $(TS(e^{p^{\ast}}_{n}))_{n} =(T(x_{n}))_{n}$ is weakly null.\
This implies that every subsequence of $ \lbrace T(x_{n}):n\in \mathbb{N} \rbrace$ has a subsequence
converging in norm to zero.\ Hence, we have $ \Vert T(x_{n}) \Vert\rightarrow 0.$
\end{proof}

As an immediate consequence of the  Theorems \ref{t1} and \ref{t2}, we can conclude that the following result:
 \begin{proposition}\label{p1}
  Let $ X $ be a Banach space.\ The following assertions  are equivalent:\\
$ \rm{(i)} $   $ X $ has the $ p $-$(DPrcP) ,$\\
$ \rm{(ii)} $  The identity operator  $ id_{X}:X\rightarrow X $ is pseudo weakly compact of order $ p, $\\
 $ \rm{(iii)} $ Every $ p $-Right Cauchy sequence  in $ X $ is norm convergent,\\
$ \rm{(iv)} $ Every   weakly limited and weakly $ p $-compact operator $ S:\ell_{p^{\ast}}\rightarrow X ,$ is compact.
\end{proposition}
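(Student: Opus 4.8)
The plan is to obtain every equivalence by applying Theorems~\ref{t1} and~\ref{t2} to the identity operator $id_{X}\colon X\to X$, so that statement~(ii) serves as the hub to which (i), (iii) and (iv) are each linked. First I would record the purely terminological point that, by Definition~\ref{d1}, a sequence in $X$ is $p$-Right null (resp.\ $p$-Right Cauchy) precisely when it is Dunford-Pettis weakly $p$-summable (resp.\ Dunford-Pettis weakly $p$-Cauchy); and since $id_{X}$ maps a sequence to itself, ``$id_{X}$ carries $p$-Right null sequences onto norm null sequences'' simply asserts that every such sequence in $X$ is norm null, i.e.\ that $X$ has the $p$-$(DPrcP)$.

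For (i)$\Leftrightarrow$(ii) and (ii)$\Leftrightarrow$(iii) I would invoke Theorem~\ref{t1} with $T=id_{X}$: its clause (ii) reads ``$id_{X}$ maps $p$-Right null sequences onto norm null sequences'', which by the observation above is exactly (i), while its clause (iii) reads ``$id_{X}$ maps $p$-Right Cauchy sequences onto norm convergent sequences'', i.e.\ every $p$-Right Cauchy sequence in $X$ is norm convergent, which is (iii). For (ii)$\Leftrightarrow$(iv) I would invoke Theorem~\ref{t2} with $T=id_{X}$ in the form of its clause (iii) (that is, with $Z=\ell_{p^{\ast}}$): since $id_{X}\circ S=S$, the condition ``$id_{X}\circ S$ is compact for every weakly limited and weakly $p$-compact operator $S\colon\ell_{p^{\ast}}\to X$'' is precisely (iv). Chaining these gives (i)$\Leftrightarrow$(ii)$\Leftrightarrow$(iii)$\Leftrightarrow$(iv).

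I do not expect a genuine obstacle here: the mathematical content is already contained in Theorems~\ref{t1} and~\ref{t2}, and the only care required is to match the set-theoretic definitions of the operator ideal $PwC_{p}$ and of the $p$-$(DPrcP)$ with the sequential reformulations furnished by Theorem~\ref{t1}. If anything, the step deserving the most attention is checking that the Dunford-Pettis weakly $p$-compact \emph{set} appearing in the definition of $PwC_{p}$ and the Dunford-Pettis weakly $p$-summable \emph{sequence} appearing in the definition of the $p$-$(DPrcP)$ really correspond under the identity operator; but this is exactly the (i)$\Leftrightarrow$(ii) implication of Theorem~\ref{t1}, so it needs no new argument.
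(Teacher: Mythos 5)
Your proposal is correct and matches the paper exactly: the paper states Proposition~\ref{p1} as an immediate consequence of Theorems~\ref{t1} and~\ref{t2}, obtained precisely by specializing both theorems to $T=id_{X}$ as you do. The terminological identification of $p$-Right null sequences with Dunford-Pettis weakly $p$-summable sequences, and of $id_{X}\circ S$ with $S$, is all that is needed, and you have checked it.
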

\begin{remark}\label{r1}\rm 
$ \rm{(i)} $  If $ X $ has the $ p $-Schur property, then $ X$ has the $ p $-$(DPrcP), $	but
in general the converse is not true.\ For example  for all $ p\geq 2,$ $ \ell_{2} $ has the $ p $-$ (DPrcP), $ while $ \ell_{2}\not\in C_{p}.$\\
 $ \rm{(ii)} $ If $ X $ has the $ (DPrcP), $ then $ X $ has the $ p $-$ (DPrcP). $\ But, the converse is not true,\ For example,
  the space $ L_{1}[0, 1] $ contains no copy of $ c_{0} .$\ Therefore $ L_{1}[0, 1] $ has the $ 1 $-Schur property {\rm (\cite[Corollary 2.9]{dm})}.\ Hence, $ L_{1}[0, 1] $ has the $ 1 $-$ (DPrcP).$\ While, $ L_{1}[0, 1] $ does not have the $ (DPrcP). $\\
$ \rm{(iii)} $ It is clear that $ B_{c_{0}} $ is a Dunford-Pettis set.\ Also, if $ (e_{n})_{n} $ is the standard basis of $ c_{0}, $ then 
 $ (e_{n})_{n}\in \ell_{1}^{w}(c_{0}) $ and so $ (e_{n})_{n}\in \ell_{p}^{w}(c_{0}) $ for all $ p\geq 1. $\ Since  $ \Vert  e_{n} \Vert=1 $  for all $ n\in \mathbb{N} , $ we have $ c_{0}$ does not have the $ p $-$ (DPrcP).$\\
 $ \rm{(iv)} $ There exists a Banach space $ X $ with the $ p $-$ (DPrcP)$ such that if $ Y $ is a closed subspace of it, then the quotient space $ \frac{X}{Y}$ does not have this property.\ For example $c_0$ does not have the $ p $-$ (DPrcP),$ while $ \ell_{1}$ has the $ p $-$ (DPrcP)$  and $c_0$ is isometrically isomorphic to a quotient of $ \ell_{1}$ {\rm (\cite[Corollary 2.3.2]{AlbKal})}.\\
 $ \rm{(v)} $ If $ X $ has the  $ p $-$ (DPrcP), $ then  $ X $ has the $ p $-$ (GPP) ,$  but in general the converse is false.\ For example,  $ c_{0} $ has the $p $-$ (GPP), $ while $c_0$ does not have the $ p $-$ (DPrcP).$\\
 $ \rm{(vi)} $ There exists a Banach space $ X $ with the $ 1 $-$ (DPrcP)  $ such that  $ X^{\ast\ast} $does not have this property.\ For example J.\ Bourgain and F.\ Delbaen \cite{bd} constructed a Banach space $ X_{BD} $ such
that $ X_{BD} $ has the Schur property and $ X^{\ast\ast}_{BD} $ is isomorphically universal for separable
Banach spaces.\ Therefore, there exists a closed subspace $ X_{0} $ 
of $  X^{\ast\ast}_{BD} $  such that  $ X_{0} $ is isomorphic to  $ c_{0}. $\ This implies that  $  X^{\ast\ast}_{BD} $  does not have the  $ 1 $-$ (DPrcP) ,$ while $ X_{BD} $  has the $ 1 $-$ (DPrcP) .$\
\end{remark}

\begin{theorem}\label{t3}
 Let $ X$ be a Banach space.\ The following assertions  are equivalent:\
$ \rm{(i)} $  $ X $ has the $ p $-$ (DPrcP),$\\
$ \rm{(ii)} $ For each Banach space $ Y,~ $ $ PwC_{p}(X, Y ) = L(X, Y ), $\\
$ \rm{(iii)} $ For each Banach space $ Y,~ $ $ PwC_{p}(Y, X ) = L(Y, X ), $\\
$ \rm{(iv)} $ Every closed separable subspace of $ X $ has the $ p $-$ (DPrcP),$\\
$ \rm{(v)} $ $ X $ is the direct sum of two Banach spaces with the $ p $-$ (DPrcP).$
\end{theorem}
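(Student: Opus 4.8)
The plan is to reduce all five statements to Proposition~\ref{p1} (which identifies the $p$-$(DPrcP)$ with the assertion $id_{X}\in PwC_{p}(X,X)$, equivalently with ``every $p$-Right Cauchy sequence in $X$ is norm convergent'') together with the characterisation of $PwC_{p}$ in Theorem~\ref{t1}. Besides these I will use only two routine stability facts: that a bounded operator sends a Dunford--Pettis weakly $p$-summable sequence to a Dunford--Pettis weakly $p$-summable sequence (weak $p$-summability is preserved by any bounded operator, and the image of a Dunford--Pettis set under a bounded operator is a Dunford--Pettis set because the adjoint is weak-to-weak continuous), and that a Dunford--Pettis set of a closed subspace $Z\subseteq X$ is a Dunford--Pettis set of $X$ (restriction $X^{\ast}\to Z^{\ast}$ is weak-to-weak continuous), while weak $p$-summability in $Z$ lifts to $X$ by Hahn--Banach.

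I would first do (i) $\Leftrightarrow$ (ii). If $X$ has the $p$-$(DPrcP)$, then by Proposition~\ref{p1} every $p$-Right null sequence of $X$ is norm null, so any $T\in L(X,Y)$ sends $p$-Right null sequences to norm null sequences and hence belongs to $PwC_{p}(X,Y)$ by Theorem~\ref{t1}; conversely, taking $Y=X$ and $T=id_{X}$ in (ii) yields $id_{X}\in PwC_{p}(X,X)$, which is (i). The same specialisation $Y=X$, $T=id_{X}$ proves (iii) $\Rightarrow$ (i), and for (i) $\Rightarrow$ (iii) I would take $T\in L(Y,X)$ and a $p$-Right null sequence $(y_{n})_{n}$ in $Y$: then $(Ty_{n})_{n}$ is $p$-Right null in $X$ by the first stability fact, hence norm null by (i), so $T\in PwC_{p}(Y,X)$ by Theorem~\ref{t1}.

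For (i) $\Leftrightarrow$ (iv): the second stability fact shows that a $p$-Right null sequence of a closed subspace $Z\subseteq X$ is $p$-Right null in $X$, so (i) for $X$ descends to all closed (in particular separable) subspaces; this gives (i) $\Rightarrow$ (iv). For (iv) $\Rightarrow$ (i), let $(x_{n})_{n}$ be a $p$-Right null sequence in $X$ and put $W=\overline{\operatorname{span}}\{x_{n}:n\in\mathbb{N}\}$, a closed separable subspace. The step I expect to be the main obstacle is to verify that $(x_{n})_{n}$ is \emph{still} $p$-Right null in $W$: its weak $p$-summability in $W$ is immediate from Hahn--Banach, but showing that $\{x_{n}\}$ remains a Dunford--Pettis subset of the smaller space $W$ --- i.e.\ that Dunford--Pettis-ness of this sequence is separably determined --- is the delicate point and is what really makes the separable reduction work. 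Granting it, $W$ has the $p$-$(DPrcP)$ by (iv), so $(x_{n})_{n}$ is norm null in $W$ and therefore in $X$, and Proposition~\ref{p1} gives (i). (Alternatively, once $\{x_{n}\}$ is Dunford--Pettis in $W$, the operator $S\colon\ell_{p^{\ast}}\to W$, $S(\alpha)=\sum_{n}\alpha_{n}x_{n}$, is weakly limited and weakly $p$-compact exactly as in the proof of Theorem~\ref{t2}; Proposition~\ref{p1}(iv) then makes it compact, so $\{x_{n}\}=\{S(e_{n}^{p^{\ast}})\}$ is relatively norm compact, and being weakly null it is norm null.)

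Finally, (i) $\Leftrightarrow$ (v): the implication (i) $\Rightarrow$ (v) is immediate, writing $X\cong X\oplus\{0\}$. For (v) $\Rightarrow$ (i), suppose $X=X_{1}\oplus X_{2}$ with $X_{1}$ and $X_{2}$ having the $p$-$(DPrcP)$, and let $(x_{n})_{n}$ be $p$-Right null in $X$; the bounded coordinate projections $P_{i}\colon X\to X_{i}$ give $p$-Right null sequences $(P_{i}x_{n})_{n}$ in $X_{i}$ (first stability fact), which are norm null, so $\|x_{n}\|\le\|P_{1}x_{n}\|+\|P_{2}x_{n}\|\to 0$. Thus every implication funnels through the single characterisation of Proposition~\ref{p1}, and I would organise the write-up as the four equivalences (i)$\Leftrightarrow$(ii), (i)$\Leftrightarrow$(iii), (i)$\Leftrightarrow$(iv), (i)$\Leftrightarrow$(v).
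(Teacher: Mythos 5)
Your reductions for (i)$\Leftrightarrow$(ii), (i)$\Leftrightarrow$(iii), (i)$\Rightarrow$(iv) and (i)$\Leftrightarrow$(v) are correct and essentially the paper's own arguments (the paper phrases (v)$\Rightarrow$(i) in terms of Dunford--Pettis weakly $p$-compact sets rather than sequences, but that is cosmetic). The problem is (iv)$\Rightarrow$(i), where you explicitly ``grant'' the one step that carries all the content: that the Dunford--Pettis property of the set $\{x_{n}\}$ descends from $X$ to the separable subspace $W=\overline{\operatorname{span}}\{x_{n}\}$. This is a genuine gap, not a routine verification. A weakly null sequence in $W^{\ast}$ need not be the restriction of a weakly null sequence in $X^{\ast}$ (Hahn--Banach extensions of the individual functionals give no control on the weak convergence of the extended sequence), so a set that is Dunford--Pettis in $X$ can in principle fail to be Dunford--Pettis in a closed subspace containing it; indeed this is the same phenomenon that prevents the Dunford--Pettis property from passing to subspaces. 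Your parenthetical alternative via the operator $S\colon\ell_{p^{\ast}}\to W$ does not avoid this, since it also presupposes that $\{x_{n}\}$ is Dunford--Pettis in $W$.

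The paper closes this gap by \emph{not} working with $W$ itself: it invokes Theorem~1.6 of Heinrich and Mankiewicz to produce a (possibly larger) separable subspace $Z\supseteq[x_{n}]$ of $X$ together with an isometric embedding $j\colon Z^{\ast}\to X^{\ast}$ satisfying $j(z^{\ast})(z)=z^{\ast}(z)$ for all $z\in Z$, and then argues by contradiction: if $(x_{n})_{n}$ is not norm null, then since $Z$ has the $p$-$(DPrcP)$ by hypothesis (iv), the set $\{x_{n}\}$ cannot be Dunford--Pettis in $Z$, so there is a weakly null sequence $(z_{n}^{\ast})_{n}$ in $Z^{\ast}$ and a subsequence with $z_{n}^{\ast}(x_{n})=1$; pushing forward by $j$ (bounded linear, hence weak-to-weak continuous) gives a weakly null sequence $(x_{n}^{\ast})_{n}$ in $X^{\ast}$ with $x_{n}^{\ast}(x_{n})=1$, contradicting the fact that $\{x_{n}\}$ is Dunford--Pettis in $X$. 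You need this (or an equivalent separable-determination device) to make the implication work; as written, your proof of (iv)$\Rightarrow$(i) is incomplete.
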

\begin{proof}
 (i) $ \Rightarrow $ (ii). Assume that $ T \in L(X, Y )  $ and that $ (x_{n})_{n} $ is a  $ p $-Right null  sequence in $ X. $\  Since $ X $ has the $ p $-$ (DPrcP) ,$ $ (x_{n})_{n} $ is norm null.\ So $ \Vert T(x_{n}) \Vert\rightarrow 0;$ that is $  PwC_{p}(X, Y ) = L(X, Y ).$\\
(ii) $ \Rightarrow $ (i). If $ Y = X, $ then (ii) implies that the identity operator on $ X $ is  pseudo weakly compact of order $  p.$\ Hence,
$ X $ has the $ p $-$ (DPrcP). $\\
The proof of the equivalence of $ \rm{(i)} $ and $ \rm{(iii)} $ is similar.\\
$\rm{(i)}\Rightarrow \rm{(iv)}  $ If $ X $ has the $ p $-$ (DPrcP), $ then any closed subspace $ Z $ of $  X$ has the $ p $-$ (DPrcP), $ since any  $ p $-Right null sequence in
$ Z $ is also a $ p $-Right null sequence in $ X. $\\
$\rm{(iv)}\Rightarrow \rm{(i)}  $ Suppose that any closed separable subspace of $  X$ has the $ p $-$ (DPrcP) $  and let $ (x_{n})_{n} $
be a $ p $-Right null sequence in $ X $ which is not norm null. Let $  Y=[x_{n}]$  be the
closed linear span of $ (x_{n})_{n} .$ Note that $  Y$ is a separable subspace of $ X $. By (\cite[Theorem 1.6]{hm}) there is a
separable subspace $ Z $ of $  X$ and an isometric embedding $ j : Z^{\ast} \rightarrow X^{\ast} $ that define by $ J(z^{\ast})(z)=z^{\ast}(z) $ for each $ z \in Z $ and $  z ^{\ast}\in Z^{\ast}.$\ By our hypothesis, $  Z$ has the  $ p $-$ (DPrcP). $\ Therefore  $ \lbrace x_{n} :n\in\mathbb{N}\rbrace $ is not a Dunford-Pettis in  $ Z. $\ Hence there is a weakly null sequence $ (z_{n}^{\ast}) $ in $ Z^{\ast} $ and a subsequence $ (x_{k_{n}})_{n} $ of $ (x_{n})_{n}, $ which we still
denote it by $ (x_{n})_{n}, $ such that $ z_{n}^{\ast}(x_{n})=1 $
 for each $ n\in\mathbb{N}. $\
Let $x_{n}^{\ast}=j(z_{n}^{\ast})  $ for each $ n\in\mathbb{N}. $\ It is clear that  $ (x_{n}^{\ast})_{n} $
 is weakly null in $ X^{\ast} $ and for each $  n,$ $ x_{n}^{\ast}(x_{n}) =z_{n}^{\ast}(x_{n})=1.$\ Hence {\rm (\cite[Theorem 1]{An})} yields that
$  \lbrace x_{n}: n\in\mathbb{N}\rbrace $ is not a Dunford-Pettis set in  $ X, $ 
which is a contradiction.\\
$\rm{(i)}\Rightarrow \rm{(v)} $ Note that $ X=X\bigoplus \lbrace 0\rbrace. $\\
$\rm{(v)}\Rightarrow \rm{(i)} $ Let $ X = Y \bigoplus Z $ such that $ Y $ and $ Z $ have the $ p $-$  (DPrcP).$\ Consider
the projections $ P_{1} : X \rightarrow Y  $ and $ P_{2} : X \rightarrow Z. $\  Suppose that $ K $ is a  Dunford-Pettis weakly $ p $-compact subset of $ X. $\ Clearly,  $ P_{1} (K)$ is a Dunford-Pettis weakly $ p $-compact subset of $ Y,$ and so it
is a norm compact set in  $ Y. $\ Similarly $ P_{2} (K)$ is a norm compact set in $ Z. $\ Also it is clear that any sequence $ (x_{n})_{n} \subseteq K $ can
be written as $ x_{n} = y_{n} + z_{n}, $ where $ y_{n}\in P_{1} (K) $ and  $ z_{n}\in P_{2} (K). $\ Thus, there are
subsequences $ (y_{n_{k}} )_{k} $ and $ (z_{n_{k}} )_{k} $ and  $ y \in P_{1} (K) $ and $ z\in P_{2} (K)$ such that  $ x_{n_{k}}=y_{n_{k}} +z_{n_{k}}\rightarrow y+z.$\ Since, $ K $ is a weakly $ p $-compact set, $ y+z\in K $
 and so, $  K$ is a  norm compact set.\
\end{proof}
\begin{theorem}\label{t4} If $ X $ has the $ p $-$ (DPrcP) ,$ then the following statements hold:
$ \rm{(i)} $ $ \displaystyle \lim_{n\rightarrow\infty} x_{n}^{\ast}(x_{n})=0,$  for every  $  p$-Right Cauchy sequence $ (x_{n})_{n} $ in $  X$ and every weakly null sequence $ (x^{\ast}_{n})_{n} $ in $  X^{\ast},$\\
$ \rm{(ii)} $ $ \displaystyle \lim_{n\rightarrow\infty} x_{n}^{\ast}(x_{n})=0,$ for every  $  p$-Right null sequence $ (x_{n})_{n} $ in $  X$ and every weakly null sequence $ (x^{\ast}_{n})_{n} $ in $  X^{\ast},$\\
$ \rm{(iii)} $ $ \displaystyle \lim_{n\rightarrow\infty} x_{n}^{\ast}(x_{n})=0,$ for every  $  p$-Right null sequence $ (x_{n})_{n} $ in $  X$ and every weakly Cauchy sequence $ (x^{\ast}_{n})_{n} $ in $  X^{\ast}.$\
\end{theorem}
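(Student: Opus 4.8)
The plan is to deduce all three assertions from Proposition \ref{p1}, which characterises the $p$-$(DPrcP)$ through the norm behaviour of $p$-Right Cauchy (resp.\ $p$-Right null) sequences. Concretely, if $X$ has the $p$-$(DPrcP)$, then by Proposition \ref{p1} every $p$-Right Cauchy sequence in $X$ is norm convergent, and in particular (by the defining property of the $p$-$(DPrcP)$, or by noting that a weakly $p$-summable sequence is weakly $p$-convergent to $0$) every $p$-Right null sequence in $X$ is norm null. Once the $X$-side sequences are controlled in norm, each of (i)--(iii) reduces to an elementary estimate using only that the dual sequences $(x^{\ast}_n)_n$ are norm bounded.

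For (i), let $(x_n)_n$ be $p$-Right Cauchy in $X$ and $(x^{\ast}_n)_n$ weakly null in $X^{\ast}$. First I would use Proposition \ref{p1} to fix $x\in X$ with $\Vert x_n-x\Vert\to 0$. Then I would split $x^{\ast}_n(x_n)=x^{\ast}_n(x_n-x)+x^{\ast}_n(x)$: the first summand is bounded in modulus by $\big(\sup_m\Vert x^{\ast}_m\Vert\big)\Vert x_n-x\Vert$, which tends to $0$ because weakly null sequences are norm bounded; the second summand tends to $0$ because $(x^{\ast}_n)_n$ is weakly null. Hence $x^{\ast}_n(x_n)\to 0$.

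Assertion (ii) is then immediate: a $p$-Right null sequence is in particular $p$-Right Cauchy (a weakly $p$-summable sequence is weakly $p$-convergent to $0$, hence weakly $p$-Cauchy), so (ii) is the special case $x=0$ of (i); alternatively, $\Vert x_n\Vert\to 0$ by the $p$-$(DPrcP)$ and $\vert x^{\ast}_n(x_n)\vert\le\big(\sup_m\Vert x^{\ast}_m\Vert\big)\Vert x_n\Vert\to 0$. For (iii) I would again use that $\Vert x_n\Vert\to 0$ for a $p$-Right null sequence, and note that a weakly Cauchy sequence $(x^{\ast}_n)_n$ in $X^{\ast}$ is norm bounded (it is weak$^{\ast}$ Cauchy, hence pointwise bounded on $X$, so the uniform boundedness principle applies); therefore $\vert x^{\ast}_n(x_n)\vert\le\big(\sup_m\Vert x^{\ast}_m\Vert\big)\Vert x_n\Vert\to 0$.

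There is no genuine obstacle here; the one point that deserves care is invoking the correct equivalent form of the $p$-$(DPrcP)$ from Proposition \ref{p1} --- namely norm \emph{convergence} of $p$-Right Cauchy sequences, not merely the defining property --- when handling (i), together with the standard facts that weakly null and weakly Cauchy sequences in a Banach space are norm bounded.
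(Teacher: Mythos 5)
Your proof is correct, and it is genuinely more elementary than the one in the paper. For part (i) the paper does not split off a limit point; instead it packages the weakly null sequence $(x^{\ast}_{n})_{n}$ into an operator $T:X\rightarrow c_{0}$, $T(x)=(x^{\ast}_{n}(x))_{n}$, invokes Theorem \ref{t3} to see that $T$ is pseudo weakly compact of order $p$, applies Theorem \ref{t1} to get norm convergence of $(T(x_{n}))_{n}$ to some $\alpha\in c_{0}$, and then reads off $x^{\ast}_{n}(x_{n})\rightarrow 0$ coordinatewise via an $\varepsilon/2$ argument. Your route --- fix $x$ with $\Vert x_{n}-x\Vert\to 0$ via Proposition \ref{p1} and write $x^{\ast}_{n}(x_{n})=x^{\ast}_{n}(x_{n}-x)+x^{\ast}_{n}(x)$ --- uses the same underlying equivalence (norm convergence of $p$-Right Cauchy sequences) but avoids the auxiliary operator entirely; it is shorter and arguably more transparent. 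For part (iii) the divergence is larger: the paper gives a contradiction argument, extracting subsequences and reducing to part (ii) via the weakly null sequence $(x^{\ast}_{k_{n}}-x^{\ast}_{n})_{n}$, whereas you simply observe that under the $p$-$(DPrcP)$ a $p$-Right null sequence is norm null by definition, a weakly Cauchy sequence in $X^{\ast}$ is norm bounded by uniform boundedness, and the product of a bounded sequence of functionals with a norm-null sequence of vectors tends to zero. Your observation in fact shows that the paper's subsequence machinery in (iii) is unnecessary under the stated hypothesis. Both treatments of (ii) agree that it is immediate. The only point worth flagging is that your argument leans on Proposition \ref{p1} (itself a consequence of Theorems \ref{t1} and \ref{t2}), so the logical dependencies are essentially the same as the paper's; what you gain is the elimination of the $c_{0}$-valued operator and of the contradiction step.
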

\begin{proof}  
(i) Let $ (x_{n})_{n} $ be a  $  p$-Right Cauchy sequence $ (x_{n})_{n} $ in $  X$ and $ (x^{\ast}_{n})_{n} $ be a weakly null sequence  in $  X^{\ast}.$\ Define a bounded linear operator $ T: X\rightarrow c_{0} $ by $ T(x)=(x^{\ast}_{n}(x))_{n} .$\ By Theorem \ref{t3}, $ T\in PwC_{p}(X,c_{0}). $\ Therefore, Theorem \ref{t1} implies that $ (T(x_{n}))_{n} $ converges to some $ \alpha=(\alpha_{n})_{n}\in c_{0} $ in norm.\ For every $ \varepsilon> 0 $ there exists a positive integer $ N_{1} $ such that $ \Vert  T(x_{n})-\alpha \Vert < \frac{\varepsilon}{2} $  for all $ n > N_{1}. $\  Since $ \alpha\in c_{0}, $ we choose another positive integer $ N_{2} $ such that $ \vert \alpha_{k} \vert < \frac{\varepsilon}{2} $  for all $ k> N_{2} .$\ Hence, we have $ \vert    x_{n}^{\ast}(x_{n}) \vert <\varepsilon $ for all $ n > max\lbrace N_{1},N_{2}\rbrace. $\ Thus  $  \displaystyle\lim_{n\rightarrow\infty} x_{n}^{\ast}(x_{n})=0.$\\
(ii) It is trivial.\\
(iii)  Suppose there exists a  $ p $-Right null sequence $ (x_{n})_{n} $ in $ X $ and there exists a weakly Cauchy sequence $  (x_{n}^{\ast})_{n}$ in $ X^{\ast} $ such that $ \vert x_{n}^{\ast}(x_{n}) \vert>\varepsilon,$ for some $ \varepsilon>0 $ and all $ n\in\mathbb{N}.$\  Since $ (x_{n})_{n} $ is weakly  $ p $-summable and in particular weakly null, there exists a subsequence $ (x_{k_{n}})_{n} $ of $ (x_{n})_{n} $ such that $ \vert x_{n}^{\ast}(x_{k_{n}}) \vert <\frac{\varepsilon}{2} $ for all $ n\in \mathbb{N}. $\ Since $ (x^{\ast}_{n})_{n} $ is weakly Cauchy, we see that  $ (x^{\ast}_{k_{n}}-x^{\ast}_{n})_{n} $ is weakly null.\  Now, by $ \rm{(ii)}, $ we have $ \lim_{n\rightarrow\infty} (x^{\ast}_{k_{n}}-x^{\ast}_{n})(x_{k_{n}})=0.$\ This implies that  $ \vert  (x^{\ast}_{k_{n}}-x^{\ast}_{n})(x_{k_{n}})   \vert <\frac{\varepsilon}{3} $ for $ n $ large enough.\ But for such $ n$'s, we have $$ \varepsilon <\vert x^{\ast}_{k_{n}}(x_{k_{n}})  \vert \leq \vert ( x^{\ast}_{k_{n}} -x^{\ast}_{n})(x_{k_{n}})  \vert+ \vert x^{\ast}_{n}(x_{k_{n}}) \vert  <\frac{5\varepsilon}{6},$$  which is a contradiction.\
\end{proof}
Recall that \cite{MZ}, if $ \mathcal{U} $ is an arbitrary Banach operator ideal and
$ \mathcal{M} $ is a closed subspace
of $\mathcal{U}(X,Y) ,$ then for arbitrary elements $ x\in X $ and  $ y^{\ast}\in Y^{\ast}, $ the evaluation operators $ \varphi_{x} : \mathcal{M}\rightarrow Y$ and
$\psi_{y^{\ast}} : \mathcal{M} \rightarrow X^{\ast} $ on $
\mathcal{M} $  are defined by $ \varphi_{x}(T) = T(x)$ and $
\psi_{y^{\ast}}(T) = T^{\ast}(y^{\ast}) $ for $ T \in \mathcal{M}.$\ 
The following result shows that the pseudo weakly compact  of order $ p$ of all evaluation
operators of a closed subspace $\mathcal{M} \subseteq\mathcal{U}(X,Y) ,$ is a necessary condition for
the $ p $-$ (DPrcP)$ of $\mathcal{M}. $
\begin{corollary}\label{c2}
  If  $\mathcal{M} $ is a closed subspace of operator ideal $ \mathcal{U}(X,Y)$ that has the
 $ p $-$ (DPrcP),$ then all of the evaluation operators $ \varphi_{x} :\mathcal{M} \rightarrow Y$ and $\psi_{y^{\ast}} : \mathcal{M} \rightarrow X^{\ast} $ are pseudo weakly compact  of order $ p. $
\end{corollary}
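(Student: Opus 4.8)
The plan is to derive this immediately from the characterisation of the $p$-$(DPrcP)$ provided by Theorem \ref{t3}. First I would check that the evaluation maps are genuinely bounded linear operators on $\mathcal{M}$. Linearity of $\varphi_{x}$ and $\psi_{y^{\ast}}$ is clear from the definitions $\varphi_{x}(T)=T(x)$ and $\psi_{y^{\ast}}(T)=T^{\ast}(y^{\ast})$. For boundedness, recall that on any Banach operator ideal $\mathcal{U}(X,Y)$ the usual operator norm is dominated by the ideal norm; hence for $T\in\mathcal{M}$ one has $\Vert \varphi_{x}(T)\Vert=\Vert T(x)\Vert\leq\Vert T\Vert\,\Vert x\Vert\leq\Vert T\Vert_{\mathcal{U}}\,\Vert x\Vert$ and, using $\Vert T^{\ast}\Vert=\Vert T\Vert$, $\Vert \psi_{y^{\ast}}(T)\Vert=\Vert T^{\ast}(y^{\ast})\Vert\leq\Vert T\Vert\,\Vert y^{\ast}\Vert\leq\Vert T\Vert_{\mathcal{U}}\,\Vert y^{\ast}\Vert$. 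Consequently $\varphi_{x}\in L(\mathcal{M},Y)$ and $\psi_{y^{\ast}}\in L(\mathcal{M},X^{\ast})$.

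Next, since $\mathcal{M}$ is a Banach space enjoying the $p$-$(DPrcP)$, the implication $\rm{(i)}\Rightarrow\rm{(ii)}$ of Theorem \ref{t3} gives $PwC_{p}(\mathcal{M},Z)=L(\mathcal{M},Z)$ for every Banach space $Z$. Applying this with $Z=Y$ shows $\varphi_{x}\in PwC_{p}(\mathcal{M},Y)$, and applying it with $Z=X^{\ast}$ shows $\psi_{y^{\ast}}\in PwC_{p}(\mathcal{M},X^{\ast})$, which is exactly the assertion.

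There is essentially no obstacle here: the only point deserving a word of care is the boundedness of the evaluation operators, which rests on the elementary fact that membership in an operator ideal forces the ideal norm to dominate the operator norm; everything else is a direct invocation of Theorem \ref{t3}. Alternatively, one could bypass Theorem \ref{t3} and argue straight from Theorem \ref{t1}: if $(T_{n})_{n}$ is a $p$-Right null sequence in $\mathcal{M}$, then the $p$-$(DPrcP)$ of $\mathcal{M}$ forces $\Vert T_{n}\Vert\to 0$, whence $\Vert \varphi_{x}(T_{n})\Vert\leq\Vert T_{n}\Vert\,\Vert x\Vert\to 0$ and $\Vert \psi_{y^{\ast}}(T_{n})\Vert\leq\Vert T_{n}\Vert\,\Vert y^{\ast}\Vert\to 0$, so both evaluation operators send $p$-Right null sequences to norm null sequences and are therefore pseudo weakly compact of order $p$ by Theorem \ref{t1}.
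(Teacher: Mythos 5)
Your argument is correct and is exactly the intended derivation: the paper states this as an immediate corollary of Theorem~\ref{t3} (via the equivalence $\rm{(i)}\Leftrightarrow\rm{(ii)}$ applied to the bounded linear operators $\varphi_{x}$ and $\psi_{y^{\ast}}$), which is precisely what you do, and your alternative route through Theorem~\ref{t1} is an equally valid unwinding of the same fact. The boundedness check via the domination of the operator norm by the ideal norm is the right (and only) point needing verification.
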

\begin{theorem}\label{t5}
Let $ X $ and $  Y$ be two Banach spaces such that $ Y $ has the Schur property.\ If
$ \mathcal{M} $ is a closed subspace of $  \mathcal{U}(X,Y)$ such that each evaluation operator $  \psi_{y^{\ast}}$ is pseudo weakly compact of order $ p $ on
$ \mathcal{M}, $ then $ \mathcal{M} $  has the $ p $-$  (DPrcP).$
\end{theorem}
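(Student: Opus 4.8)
The plan is to verify the $ p $-$ (DPrcP) $ of $ \mathcal{M} $ directly from the definition, i.e.\ to show that every $ p $-Right null sequence $ (S_{n})_{n} $ in $ \mathcal{M} $ --- by Definition \ref{d1} a Dunford--Pettis weakly $ p $-summable sequence --- is norm null; by Theorem \ref{t1} and Proposition \ref{p1} this amounts to the same thing as $ id_{\mathcal{M}} $ being pseudo weakly compact of order $ p $. First I would argue by contradiction: if $ \Vert S_{n}\Vert\not\to 0 $, then after passing to a subsequence (still $ p $-Right null, since being a Dunford--Pettis set and being weakly $ p $-summable both pass to subsequences) there are $ \varepsilon>0 $ and vectors $ x_{n}\in B_{X} $ with $ \Vert S_{n}(x_{n})\Vert>\varepsilon $ for every $ n $.

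The key step is to transfer the problem from $ \mathcal{M} $ to $ Y $ by testing against the witnessing vectors $ S_{n}(x_{n})\in Y $ instead of estimating $ \Vert S_{n}\Vert $ directly. Fix $ y^{\ast}\in Y^{\ast} $. Since the evaluation operator $ \psi_{y^{\ast}}:\mathcal{M}\to X^{\ast} $, $ \psi_{y^{\ast}}(T)=T^{\ast}(y^{\ast}) $, is by hypothesis pseudo weakly compact of order $ p $, Theorem \ref{t1} applied to the $ p $-Right null sequence $ (S_{n})_{n} $ yields $ \Vert S_{n}^{\ast}(y^{\ast})\Vert_{X^{\ast}}=\Vert\psi_{y^{\ast}}(S_{n})\Vert\to 0 $, and hence $ \vert y^{\ast}(S_{n}(x_{n}))\vert=\vert(S_{n}^{\ast}y^{\ast})(x_{n})\vert\leq\Vert S_{n}^{\ast}(y^{\ast})\Vert_{X^{\ast}}\to 0 $. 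As $ y^{\ast}\in Y^{\ast} $ was arbitrary, the bounded sequence $ (S_{n}(x_{n}))_{n} $ is weakly null in $ Y $. Now the Schur property of $ Y $ upgrades this to $ \Vert S_{n}(x_{n})\Vert\to 0 $, contradicting $ \Vert S_{n}(x_{n})\Vert>\varepsilon $. This contradiction forces $ \Vert S_{n}\Vert\to 0 $, so $ \mathcal{M} $ has the $ p $-$ (DPrcP) $.

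I expect no serious technical obstacle here; the only real idea is to push the vectors $ x_{n} $ of the unit ball through the operators and work inside $ Y $. The single delicate point --- and the reason the hypothesis is phrased with the \emph{Schur} property rather than with, say, $ Y\in(DPrcP) $ or $ Y\in C_{p} $ --- is that the argument only produces a \emph{weakly} null sequence $ (S_{n}(x_{n}))_{n} $ in $ Y $, and it is precisely the Schur property that converts this into norm nullity; note that no uniformity over $ B_{X} $ is ever needed, since we are merely excluding a single sequence of vectors of $ B_{X} $ witnessing $ \Vert S_{n}\Vert>\varepsilon $. As an alternative one could instead combine Proposition \ref{p1}(iv) with Theorem \ref{t2} to reduce the statement to showing that every weakly limited, weakly $ p $-compact operator $ V:\ell_{p^{\ast}}\to\mathcal{M} $ is compact, but the direct sequential route above seems cleaner.
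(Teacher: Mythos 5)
Your argument is correct and is essentially the same as the paper's proof: choose witnesses $x_{n}\in B_{X}$ with $\Vert S_{n}(x_{n})\Vert>\varepsilon$, use the hypothesis that each $\psi_{y^{\ast}}$ is pseudo weakly compact of order $p$ to get $\Vert S_{n}^{\ast}(y^{\ast})\Vert\to 0$ and hence weak nullity of $(S_{n}(x_{n}))_{n}$, and conclude by the Schur property of $Y$. Your version is in fact slightly more careful than the paper's, since you explicitly justify passing to a subsequence that remains $p$-Right null.
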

\begin{proof}
 Suppose that $ \mathcal{M} $ does not have the $ p $-$  (DPrcP).$\ Then there is a  $ p $-Right null sequence
$ (T_{n})_{n} $ in $ \mathcal{M} $ such that $ \Vert T_{n} \Vert \geq \varepsilon $ for all positive integer $  n$ and some $ \varepsilon >0. $\ We can choose a sequence $ (x_{n})_{n} $
in $ B_{X} $ such that $ \Vert T_{n}(x_{n}) \Vert \geq \varepsilon.$\ In addition, for each $ y^{\ast}\in Y^{\ast}, $ the evaluation operator is $  \psi_{y^{\ast}}$ is pseudo weakly compact of order $ p .$\
Therefore  $ \Vert T_{n}^{\ast}(y^{\ast})  \Vert\rightarrow 0. $\
So, $ \vert \langle  y^{\ast} , T_{n}(x_{n}) \rangle\vert \leq \Vert T^{\ast}_{n}(y^{\ast})   \Vert \Vert x_{n} \Vert\rightarrow 0. $\
 Hence $ (T_{n}(x_{n}))_{n} $ is weakly null in $ Y, $
and so is norm null, which is a contradiction.\
\end{proof}
 As an immediate consequence of the  part $ \rm{(ii)} $ in $ \rm{Exercise } {~4}$ of Chapter $ \rm{VII }$ \cite{di1}, we can conclude the following result.\ The proof is simple and left to the reader.
\begin{lemma}\rm \label{l1}
Let $ (\alpha_{n})_{n} \in\ell_{\infty}.$\ The operator $ T:c_{0}\rightarrow c_{0}$ defined by
$T(x_{1},x_{2},...)=(\alpha_{1}x_{1},\alpha_{2}x_{2},...) $ is  compact  if and only if $ (\alpha_{n})_{n} \in c_{0}. $
\end{lemma}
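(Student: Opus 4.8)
The plan is to treat the two implications separately, each by an elementary argument using the canonical unit vector basis $(e_{n})_{n}$ of $c_{0}$ and the coordinatewise description of norm limits in $c_{0}$.

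First I would prove that $(\alpha_{n})_{n}\in c_{0}$ implies $T$ compact. For $N\in\mathbb{N}$ introduce the finite-rank truncation $T_{N}:c_{0}\rightarrow c_{0}$ given by $T_{N}(x_{1},x_{2},\dots)=(\alpha_{1}x_{1},\dots,\alpha_{N}x_{N},0,0,\dots)$. A direct estimate shows $\Vert (T-T_{N})(x)\Vert_{\infty}=\sup_{n>N}\vert\alpha_{n}x_{n}\vert\leq(\sup_{n>N}\vert\alpha_{n}\vert)\Vert x\Vert_{\infty}$, and testing against $e_{m}$ with $m>N$ gives equality, so $\Vert T-T_{N}\Vert=\sup_{n>N}\vert\alpha_{n}\vert$. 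Since $(\alpha_{n})_{n}\in c_{0}$ this tends to $0$ as $N\rightarrow\infty$; as each $T_{N}$ has finite rank and $K(c_{0},c_{0})$ is closed in $L(c_{0},c_{0})$, the operator $T$ is compact.

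For the converse I would argue by contradiction: suppose $T$ is compact but $(\alpha_{n})_{n}\notin c_{0}$. Then there are $\varepsilon>0$ and a strictly increasing sequence $(n_{k})_{k}$ with $\vert\alpha_{n_{k}}\vert\geq\varepsilon$ for all $k$. The sequence $(e_{n_{k}})_{k}$ lies in $B_{c_{0}}$, so compactness of $T$ yields a norm-convergent subsequence of $(T(e_{n_{k}}))_{k}=(\alpha_{n_{k}}e_{n_{k}})_{k}$, say $\alpha_{n_{k_{j}}}e_{n_{k_{j}}}\rightarrow y$ in $c_{0}$. Evaluating at a fixed coordinate $m$ and observing that the $m$-th coordinate of $\alpha_{n_{k_{j}}}e_{n_{k_{j}}}$ vanishes once $n_{k_{j}}>m$, we get $y_{m}=0$ for every $m$, hence $y=0$; but $\Vert\alpha_{n_{k_{j}}}e_{n_{k_{j}}}\Vert_{\infty}=\vert\alpha_{n_{k_{j}}}\vert\geq\varepsilon$ for all $j$, contradicting convergence to $0$. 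Therefore $(\alpha_{n})_{n}\in c_{0}$.

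I do not expect any genuine obstacle here: both directions reduce to the boundedness of the unit vector basis of $c_{0}$ together with the identification of norm limits coordinate by coordinate. The only points needing a line of care are the computation $\Vert T-T_{N}\Vert=\sup_{n>N}\vert\alpha_{n}\vert$ and the remark that a norm limit in $c_{0}$ of vectors whose supports escape to infinity must be zero.
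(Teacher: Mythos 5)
Your proof is correct: the finite-rank truncation estimate $\Vert T-T_{N}\Vert=\sup_{n>N}\vert\alpha_{n}\vert$ gives one direction, and the unit-vector-basis argument (supports escaping to infinity force the norm limit to be zero) gives the other. The paper does not actually supply a proof of this lemma --- it cites part (ii) of Exercise 4 in Chapter VII of Diestel's book and leaves the verification to the reader --- and your argument is precisely the standard one that is intended there.
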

A bilinear operator
$ \phi:X\times Y\rightarrow Z $
is called separately compact if for each fixed $ y \in Y , $ the linear operator $ T_{y}: X\rightarrow Z : x\mapsto \phi (x,y)$  and for each fixed $ x \in X , $ the linear operator $ T_{x}: Y\rightarrow Z : y\mapsto \phi (x,y) $
  are compact.\
\begin{proposition}\label{p2} 
If  every symmetric bilinear separately compact operator $ S:X\times X\rightarrow c_{0} $ is pseudo weakly compact of order $ p ,$ then $ X $ has the  $ p $-$ (DPrcP) .$\
\end{proposition}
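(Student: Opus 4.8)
The plan is to argue by contraposition: assuming $X$ fails the $p$-$(DPrcP)$, I will construct a single symmetric, bilinear, separately compact operator $S\colon X\times X\to c_{0}$ that is \emph{not} pseudo weakly compact of order $p$ (i.e.\ it carries some $p$-Right null sequence of $X$ onto a sequence bounded away from $0$ in $c_{0}$), which contradicts the hypothesis. By Proposition \ref{p1}, the failure of the $p$-$(DPrcP)$ produces a $p$-Right null sequence $(x_{n})_{n}$ in $X$ that is not norm null; passing to a subsequence and relabelling we may assume $\Vert x_{n}\Vert\geq\varepsilon$ for some $\varepsilon>0$ and every $n$. Since $(x_{n})_{n}$ is weakly $p$-summable it is, in particular, weakly null.

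The key step is to choose functionals $(f_{n})_{n}\subseteq B_{X^{\ast}}$ with $f_{n}(x_{n})\geq\delta>0$ for all $n$ and $f_{n}\to 0$ in the weak$^{\ast}$ topology of $X^{\ast}$. The natural route: fix norming functionals $g_{n}\in B_{X^{\ast}}$ with $g_{n}(x_{n})=\Vert x_{n}\Vert$, select (working first inside the separable space $[x_{n}]$) a weak$^{\ast}$ cluster point $g$ of $(g_{n})_{n}$ and a subsequence $g_{n_{k}}\to g$ weak$^{\ast}$, and put $f_{k}=\tfrac{1}{2}(g_{n_{k}}-g)\in B_{X^{\ast}}$. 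Because $(x_{n})_{n}$ is weakly null, $g(x_{n_{k}})\to 0$, so $f_{k}(x_{n_{k}})\to\liminf_{k}\Vert x_{n_{k}}\Vert\geq\varepsilon$, hence $f_{k}(x_{n_{k}})\geq\varepsilon/4=:\delta$ for $k$ large; after relabelling we have sequences $(f_{n})_{n}$ and $(x_{n})_{n}$ with $f_{n}\to 0$ weak$^{\ast}$ and $f_{n}(x_{n})\geq\delta$. I expect this to be the main obstacle, since a weak$^{\ast}$-null sequence chosen in $[x_{n}]^{\ast}$ need not extend to a weak$^{\ast}$-null sequence in $X^{\ast}$; one must either reduce genuinely to the separable case or argue more carefully to guarantee $f_{n}(x)\to 0$ for \emph{every} $x\in X$, which is exactly what makes the operator below land in $c_{0}$.

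With such $(f_{n})_{n}$ in hand, define $S\colon X\times X\to c_{0}$ by $S(x,y)=(f_{n}(x)f_{n}(y))_{n}$. Since $f_{n}(x)\to 0$ and $f_{n}(y)\to 0$, indeed $S(x,y)\in c_{0}$; plainly $S$ is bilinear and symmetric, and $\Vert S(x,y)\Vert_{\infty}\leq\Vert x\Vert\,\Vert y\Vert$, so $S$ is bounded. To verify separate compactness, fix $y$ and factor $T_{y}=D_{y}\circ V$, where $V\colon X\to c_{0}$ is the bounded operator $V(x)=(f_{n}(x))_{n}$ and $D_{y}\colon c_{0}\to c_{0}$ is the diagonal operator with diagonal $(f_{n}(y))_{n}$; as $(f_{n}(y))_{n}\in c_{0}$, Lemma \ref{l1} gives that $D_{y}$ is compact, hence $T_{y}=D_{y}\circ V$ is compact, and by symmetry so is each $T_{x}$. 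Thus $S$ is symmetric, bilinear and separately compact. However, the $n$-th coordinate of $S(x_{n},x_{n})$ equals $f_{n}(x_{n})^{2}\geq\delta^{2}$, so $\Vert S(x_{n},x_{n})\Vert_{\infty}\geq\delta^{2}$ for all $n$ while $(x_{n})_{n}$ is $p$-Right null; therefore $S$ is not pseudo weakly compact of order $p$, contradicting the hypothesis and completing the argument.
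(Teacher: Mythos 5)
Your overall strategy coincides with the paper's: argue by contraposition, manufacture from a non-norm-null $p$-Right null sequence $(x_{n})_{n}$ a bounded linear map $V\colon X\to c_{0}$ that does not annihilate $(x_{n})_{n}$, set $S(x,y)=V(x)\cdot V(y)$ (coordinatewise product), and get separate compactness from the diagonal-operator Lemma \ref{l1}. The only place you diverge is in how $V$ is produced, and that is precisely the step you leave open — and it is the crux of the whole proof. As written, your construction of the functionals $(f_{k})_{k}$ does not go through: extracting a weak$^{\ast}$ convergent subsequence of the norming functionals $(g_{n})_{n}$ requires weak$^{\ast}$ metrizability of $B_{X^{\ast}}$, which forces you to work in $[x_{n}]^{\ast}$; but then the cluster point $g$ and the convergence $g_{n_{k}}\to g$ live only on $[x_{n}]$, so after a Hahn--Banach extension of $g$ the functionals $f_{k}=\tfrac{1}{2}(g_{n_{k}}-g)$ satisfy $f_{k}(x)\to 0$ only for $x\in[x_{n}]$. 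For $x\notin[x_{n}]$ there is no control, so $S(x,x)=(f_{k}(x)^{2})_{k}$ need not belong to $c_{0}$ and your operator $S$ is not even well defined into $c_{0}$. You flag this yourself, but flagging it is not the same as closing it; without a weak$^{\ast}$-null sequence in $X^{\ast}$ (on all of $X$) that stays bounded away from zero on $(x_{n})_{n}$, the proof does not exist. Note that such a sequence cannot exist if $\{x_{n}\}$ happens to be a limited set, so the obstruction is not merely technical.

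The paper fills exactly this hole differently: it passes to a basic subsequence via Bessaga--Pe\l czy\'nski, defines $T\colon[x_{n}]\to c_{0}$ by $T(x_{n})=e_{n}$, and extends $T$ to all of $X$ by Sobczyk's theorem; the extended $T$ plays the role of your $V$ and everything downstream is identical to what you wrote. (Sobczyk's theorem itself requires the ambient space to be separable, so strictly speaking the paper's argument, like yours, is only complete for separable $X$; but it does at least name the device that performs the extension.) If you restrict to separable $X$, your argument closes immediately — $B_{X^{\ast}}$ is then weak$^{\ast}$ metrizable, the subsequence can be extracted globally, your $f_{k}$ are genuinely weak$^{\ast}$ null on $X$, and the remainder of your proof (well-definedness and boundedness of $S$, separate compactness via Lemma \ref{l1}, and $\Vert S(x_{n},x_{n})\Vert_{\infty}\geq\delta^{2}$) is correct. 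For general $X$ you need either the Sobczyk-type extension or some other mechanism, and your proposal does not supply one.
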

\begin{proof}
 If $ X$ does not have the $ p $-$ (DPrcP),$ then there is a  $ p $-Right null sequence $ (x_{n})_{n} $ in the unit ball of $ X. $\ Using
the Bessaga-Pelczy$\acute{n}$ski selection principle \cite{AlbKal}, we can pick a basic subsequence
of it, which we will call $  (x_{n})_{n}.$\ Let $  Z$ be the closed subspace
of $ X $ generated by $ (x_{n})_{n} .$\ Let us define the operator $  T: Z\rightarrow  c_{0}$ by
$ T(x_{n})=e_{n} $ where $ (e_{n})_{n} $ is the canonical basis of $ c_{0}. $\ We can now use
Sobczyk’s theorem \cite{AlbKal} to extend $  T$ to the whole of $ X. $\ For convenience, we denote its  extension again by $ T. $\ Now, consider the symmetric bilinear operator
$ S: X\times X\rightarrow c_{0} $ given by $ S(x, y) = T(x) .T(y),  $ the product is the component wise product in $ c_{0}. $\
We show that for each fixd  $ y \in X, $ the operator $ S(., y) : X \rightarrow c_{0}  $ given
by  $ x $ maps to $ S(x, y) $ is compact.\ Since it can be decomposed as $ \delta_{x}\circ T, $ where $   \delta_{x}: c_{0}\rightarrow c_{0}$ denotes the
diagonal operator given by $ z\mapsto T(x).z. $\  It is clear  that $  \delta_{x} $ is compact (see Lemma \ref{l1}) and so,  $ S(., y)=\delta_{x}\circ T $ is compact.\ Proceeding analogously with
the other variable, we infer that $ S $ is separately compact.\ On the
other hand, $  S$ is not a pseudo weakly compact operator of order $ p ,$ since it maps the sequence
$ (x_{n}, x_{n})_{n}, $ which is  $ p $-Right null in $ X\times  X $ into the basis of $ c_{0}, $ which is a contradiction.\
\end{proof}
\begin{proposition}\label{p3} 
 Suppose that $ T \in PwC_{p}(X, Y ) $ is not strictly singular.\ Then, $ X $
and $ Y $ contain simultaneously some infinite dimensional closed subspaces with
the $ p $-$ (DPrcP). $\
\end{proposition}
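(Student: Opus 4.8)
The plan is to exploit the failure of strict singularity to produce a \emph{single} infinite dimensional closed subspace of $X$ on which $T$ acts as an isomorphism, and then to show that this subspace together with its image both inherit the $p$-$(DPrcP)$. First I would record the subspace given by the hypothesis: since $T$ is not strictly singular, there is an infinite dimensional closed subspace $Z\subseteq X$ such that $T\vert_{Z}:Z\rightarrow Y$ is an isomorphism onto its range. Put $W:=T(Z)$. As $Z$ is a Banach space and $T\vert_{Z}$ is an isomorphism onto $W$, the space $W$ is an infinite dimensional \emph{closed} subspace of $Y$, and $T\vert_{Z}:Z\rightarrow W$ is an onto isomorphism.

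Next I would check that $Z$ has the $p$-$(DPrcP)$. Let $(z_{n})_{n}$ be a $p$-Right null sequence in $Z$, i.e.\ $(z_{n})_{n}$ is Dunford--Pettis and weakly $p$-summable in $Z$. Viewing $(z_{n})_{n}$ inside $X$ via the inclusion $j:Z\hookrightarrow X$, it remains weakly $p$-summable (bounded operators preserve weak $p$-summability), and $\lbrace z_{n}:n\in\mathbb{N}\rbrace$ remains a Dunford--Pettis set in $X$: if $(x^{\ast}_{n})_{n}$ is weakly null in $X^{\ast}$, then $(j^{\ast}(x^{\ast}_{n}))_{n}$ is weakly null in $Z^{\ast}$, so $\lim_{n}\sup_{k}\vert x^{\ast}_{n}(z_{k})\vert=\lim_{n}\sup_{k}\vert j^{\ast}(x^{\ast}_{n})(z_{k})\vert=0$. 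Hence $(z_{n})_{n}$ is $p$-Right null in $X$, and since $T\in PwC_{p}(X,Y)$, Theorem \ref{t1} gives $\Vert T(z_{n})\Vert\rightarrow 0$. Because $T\vert_{Z}$ is an isomorphism, $\Vert z_{n}\Vert\leq\Vert (T\vert_{Z})^{-1}\Vert\,\Vert T(z_{n})\Vert\rightarrow 0$. Thus every $p$-Right null sequence in $Z$ is norm null, so by the definition of the $p$-$(DPrcP)$ (see also Proposition \ref{p1}) the space $Z$ has the $p$-$(DPrcP)$.

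Finally I would transfer the property from $Z$ to $W=T(Z)$: the $p$-$(DPrcP)$ is an isomorphic invariant, since a linear isomorphism $U$ carries weakly $p$-summable sequences to weakly $p$-summable sequences and Dunford--Pettis sets to Dunford--Pettis sets (the same adjoint argument as above, applied to $U^{-1}$), hence carries $p$-Right null sequences to $p$-Right null sequences, and trivially carries norm null sequences to norm null sequences. Applying this to the onto isomorphism $T\vert_{Z}:Z\rightarrow W$ shows that $W$ has the $p$-$(DPrcP)$, and then $Z\subseteq X$ and $W=T(Z)\subseteq Y$ are the desired subspaces. The argument is essentially bookkeeping; the only point requiring care — and thus the main obstacle — is verifying the two stability facts used implicitly: that a $p$-Right null sequence in a closed subspace $Z$ is still $p$-Right null when regarded in $X$, and that the $p$-$(DPrcP)$ passes through the isomorphism $T\vert_{Z}$. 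Both reduce to the observations that bounded linear operators preserve weak $p$-summability and send Dunford--Pettis sets to Dunford--Pettis sets, the latter because adjoints of bounded operators map weakly null sequences to weakly null sequences.
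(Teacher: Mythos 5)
Your proposal is correct and follows essentially the same route as the paper: take the infinite dimensional closed subspace $Z$ on which $T$ is an isomorphism onto its range, note that a $p$-Right null sequence in $Z$ stays $p$-Right null in $X$ so that $T\in PwC_{p}(X,Y)$ forces it to be norm null, and then transfer the $p$-$(DPrcP)$ to $T(Z)$ through the isomorphism. The only difference is that you spell out the two stability facts (preservation of weak $p$-summability and of Dunford--Pettis sets under the inclusion and the isomorphism) that the paper leaves implicit.
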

\begin{proof}
 Suppose that $ T $ has a bounded inverse on the closed infinite dimensional subspace
$  Z$ of $  X.$\ If $ (x_{n})_{n} $ is a  $ p $-Right null sequence in $ Z, $
 then $ (x_{n})_{n} $ is a $ p $-Right null sequence in $ X. $\ By assumption, $ \Vert T(x_{n})\Vert\rightarrow 0 $ and so $ \Vert x_{n} \Vert\rightarrow 0.$\
 Hence, $  Z$ has the $ p $-$ (DPrcP). $\ Similarly, we can see that  $ T(Z)  $ has the same property.
\end{proof}
It is clear that every $p$-convergent operator is pseudo weakly compact of order $ p ,$ but in general the converse is not true.\ For example, the identity operator $ id_{\ell_{2}}:\ell_{2}\rightarrow\ell_{2} $ is weakly compact and so is pseudo weakly compact  of order $ 2, $ while it is not $ 2$-convergent.\\
 Here, we give a characterization
of those Banach spaces in which the converse of the above assertion
holds.
\begin{theorem}\label{t6} If $ X $ is a Banach space, then $ X $ has the  $ (DPP_{p}) $ if and only if for each
Banach space $ Y, $  $ C_{p}(X,Y)=PwC_{p}(X,Y) .$
\end{theorem}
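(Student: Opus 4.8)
The plan is to prove both implications by moving between the two canonical ``test'' situations: weakly $p$-summable sequences in $X$ together with weakly null sequences in $X^{\ast}$, and bounded operators $T:X\to Y$. Recall that by Theorem~\ref{t1} an operator is pseudo weakly compact of order $p$ iff it sends $p$-Right null (i.e.\ Dunford--Pettis weakly $p$-summable) sequences to norm null sequences, whereas a $p$-convergent operator sends \emph{every} weakly $p$-summable sequence to a norm null sequence. So the inclusion $C_p(X,Y)\subseteq PwC_p(X,Y)$ is automatic, and the whole content is to decide when the reverse inclusion holds for all $Y$.

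For the forward direction, suppose $X$ has $(DPP_p)$ and let $T\in PwC_p(X,Y)$. I would take a weakly $p$-summable sequence $(x_n)_n$ in $X$ and try to show $\Vert T(x_n)\Vert\to 0$; it suffices to show that every subsequence has a further subsequence on which $T$ is norm null. Passing to a subsequence, pick norming functionals $y_n^{\ast}\in B_{Y^{\ast}}$ with $\langle y_n^{\ast},T(x_n)\rangle=\Vert T(x_n)\Vert$. The key is to argue that $\{x_n:n\in\mathbb{N}\}$ is a Dunford--Pettis set in $X$: by Andrews' characterization this means every weakly null sequence $(x_n^{\ast})_n$ in $X^{\ast}$ satisfies $\sup_k|x_n^{\ast}(x_k)|\to 0$, and here the $(DPP_p)$ hypothesis is exactly what forces $x_n^{\ast}(x_n)\to 0$; combined with a standard gliding-hump/diagonal argument (using weak $p$-summability, hence weak nullity, of $(x_k)_k$ to make the off-diagonal terms small) one upgrades $x_n^{\ast}(x_n)\to 0$ to the uniform statement. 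Once $\{x_n\}$ is Dunford--Pettis and weakly $p$-summable, $(x_n)_n$ is $p$-Right null, so Theorem~\ref{t1} gives $\Vert T(x_n)\Vert\to 0$. Hence $T\in C_p(X,Y)$.

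For the converse, suppose $C_p(X,Y)=PwC_p(X,Y)$ for every $Y$, and assume towards a contradiction that $X$ fails $(DPP_p)$: there are a weakly $p$-summable $(x_n)_n$ in $X$ and a weakly null $(x_n^{\ast})_n$ in $X^{\ast}$ with $|x_n^{\ast}(x_n)|\ge\varepsilon$ for all $n$. Define $T:X\to c_0$ by $T(x)=(x_n^{\ast}(x))_n$, which is bounded since $(x_n^{\ast})_n$ is weakly null. The idea is that $T$ is pseudo weakly compact of order $p$: if $(z_k)_k$ is $p$-Right null, then in particular $\{z_k\}$ is a Dunford--Pettis set and $(x_n^{\ast})_n$ is weakly null in $X^{\ast}$, so $\sup_k|x_n^{\ast}(z_k)|\to 0$ as $n\to\infty$, which is precisely $\Vert T(z_k)\Vert_{c_0}\to 0$ uniformly-in-the-right-way — more carefully, one shows $\Vert T(z_k)\Vert\to 0$ by the same Dunford--Pettis/weakly-null pairing. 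Thus $T\in PwC_p(X,c_0)=C_p(X,c_0)$, forcing $\Vert T(x_n)\Vert\to 0$; but $\Vert T(x_n)\Vert\ge|x_n^{\ast}(x_n)|\ge\varepsilon$, a contradiction. Therefore $X$ has $(DPP_p)$.

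The main obstacle is the bookkeeping in the forward direction: turning the pointwise conclusion $x_n^{\ast}(x_n)\to 0$ supplied by $(DPP_p)$ into the genuinely uniform statement required by Andrews' criterion for a Dunford--Pettis set. This is the place where one must combine a subsequence-of-every-subsequence reduction with a diagonal extraction, exploiting that weak $p$-summability of $(x_k)_k$ (in particular weak nullity) kills $x_n^{\ast}(x_k)$ for each fixed $n$; I expect this to be the only step needing care, the rest being a direct translation through Theorem~\ref{t1}. One should also double-check the symmetric claim in the converse, that the diagonal operator $T$ really lies in $PwC_p$ rather than merely being bounded, which again reduces to the Dunford--Pettis/weakly-null pairing rather than to any deeper property of $X$.
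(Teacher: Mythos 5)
Your proposal is correct and follows essentially the same route as the paper: the forward direction upgrades the $(DPP_p)$ diagonal condition to the statement that a weakly $p$-summable sequence has Dunford--Pettis range (Andrews' criterion) and then applies Theorem~\ref{t1}, and the converse uses the same diagonal operator $T(x)=(x_n^{\ast}(x))_n$ into $c_0$ built from a pair of sequences witnessing the failure of $(DPP_p)$. The only cosmetic difference is that you verify $T\in PwC_p(X,c_0)$ directly from the Dunford--Pettis/weakly-null pairing (which indeed needs the two-step estimate you allude to, splitting the supremum over small and large indices), whereas the paper simply observes that $T$ is weakly compact and hence pseudo weakly compact of order $p$.
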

\begin{proof}
  Suppose that $ X\in(DPP_{p}) $ and $ T\in PwC_{p}(X, Y ).$\ If $ (x_{n})_{n} $ is a weakly $ p $-summable sequence in $ X, $ then  $ \lim_{n\rightarrow\infty} x^{\ast}_{n}(x_{n})=0$ for each weakly null $ (x^{\ast}_{n})_{n} $ in $ X^{\ast}. $\ Hence, {\rm (\cite[Theorem 1]{An})} implies that $ \lbrace x_{n}:n\in\mathbb{N}\rbrace$ is a Dunford-Pettis set.\ Hence, $ (x_{n})_{n} $ is a $ p $-Right null and so, the sequence $ (T(x_{n}))_{n} $ is norm null.\ Therefore, $ T $ is $ p $-convergent.\\
Conversely,
Suppose that $ X $ does not have the  $ (DPP_{p}) .$\ Therefore by {\rm (\cite[Theorem 3.1]{ccl})} there exists a  weakly $ p $-summable sequence $ (x_{n})_{n} $ in $ X $ and there exists a weakly null sequence $  (x_{n}^{\ast})_{n}$ in $ X^{\ast} $ such that $ \vert x_{n}^{\ast}(x_{n}) \vert>\varepsilon,$ for some $ \varepsilon>0 $ and all $ n\in\mathbb{N}.$\ Define  the operator $ T:X\rightarrow c_{0} ,$ as  $ T(x)=(x_{n}^{\ast}(x))_{n} .$\ It is clear that $ T $ is weakly compact,  and so $ T$ is pseudo weakly compact  of order $ p.$\ Since  $ C_{p}(X,Y)=PwC_{p}(X,Y) ,$ $ T $  is $ p $-convergent.\ But,  $ (x_{n})_{n} $ is a weakly $ p $-summable sequence in $ X $ and $ \Vert T(x_{n})\Vert\geq \vert x^{\ast}_{n}(x_{n})\vert>\varepsilon, $ for all $ n\in \mathbb{N},$\ which is a contradiction.\
\end{proof}
\begin{corollary}\label{c3}
$ \rm{(i)} $ If $ X $ is an arbitrary Banach space, then   $ C_{1}(X,Y)=PwC_{1}(X,Y),$  for each Banach space  $ Y. $\\
$ \rm{(ii)} $ A Banach space $ X $  has  both the  $ p $-$ (DPrcP) $ and  $ (DPP_{p}) $ if and only if $ X $ has the  $ p $-Schur property.\
\end{corollary}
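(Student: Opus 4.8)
The plan is to read both parts off the machinery already in place, the one genuinely new ingredient being the observation that \emph{in an arbitrary Banach space every weakly $1$-summable sequence is a Dunford--Pettis set}. I would prove this first: if $(x_n)_n$ is weakly $1$-summable in $X$, then $x^{\ast}\mapsto (x^{\ast}(x_n))_n$ is a well-defined linear map $R\colon X^{\ast}\to\ell_1$, and a closed-graph argument (the coordinate functionals on $\ell_1$ are continuous, so the graph of $R$ is closed) shows $R$ is bounded. If $(x_n^{\ast})_n$ is weakly null in $X^{\ast}$, then $(Rx_n^{\ast})_n$ is weakly null in $\ell_1$, hence norm null by the Schur property of $\ell_1$, so $\sup_m|x_n^{\ast}(x_m)|\le\|Rx_n^{\ast}\|_1\to 0$; by the (equivalent) definition of a Dunford--Pettis set recalled in the Introduction, $\{x_n:n\in\mathbb N\}$ is Dunford--Pettis. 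In particular $x_n^{\ast}(x_n)\to 0$ for every weakly null $(x_n^{\ast})_n$, i.e. $X\in(DPP_1)$ for every Banach space $X$; part~(i) is then immediate from Theorem~\ref{t6}.

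For part (ii), the implication ``$p$-Schur $\Rightarrow$ $p$-$(DPrcP)$ and $(DPP_p)$'' is routine: the first half is Remark~\ref{r1}(i), and for the second half a weakly $p$-summable sequence is norm null while a weakly null sequence in $X^{\ast}$ is bounded, so $|x_n^{\ast}(x_n)|\to 0$. For the converse, assume $X$ has both $p$-$(DPrcP)$ and $(DPP_p)$ and let $(x_n)_n$ be weakly $p$-summable; I want $\|x_n\|\to 0$. If this fails I pass to a subsequence with $\|x_n\|\ge\varepsilon$ for all $n$. Since $X$ has the $p$-$(DPrcP)$, the set $\{x_n:n\in\mathbb N\}$ cannot be Dunford--Pettis (otherwise $(x_n)_n$ would be a Dunford--Pettis weakly $p$-summable sequence, hence norm null). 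So there are $\varepsilon'>0$, a weakly null sequence $(x_n^{\ast})_n$ in $X^{\ast}$, and, for $n$ in an infinite set, indices $m(n)$ with $|x_n^{\ast}(x_{m(n)})|\ge\varepsilon'$.

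The step that needs a little care --- and which I expect to be the only non-routine point --- is converting this ``off-diagonal'' estimate into an actual violation of $(DPP_p)$. The indices $m(n)$ cannot remain bounded: if some value $m_0$ occurred for infinitely many $n$, we would get $|x_n^{\ast}(x_{m_0})|\ge\varepsilon'$ for infinitely many $n$, contradicting weak nullity of $(x_n^{\ast})_n$. Hence, after passing to a further subsequence $(n_k)_k$, the sequence $(m(n_k))_k$ is strictly increasing. Setting $y_k:=x_{m(n_k)}$ (a subsequence of $(x_m)_m$, so weakly $p$-summable) and $y_k^{\ast}:=x_{n_k}^{\ast}$ (a subsequence of a weakly null sequence, so weakly null), we obtain $|y_k^{\ast}(y_k)|\ge\varepsilon'$ for all $k$, contradicting $(DPP_p)$. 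Thus $(x_n)_n$ is norm null and $X$ has the $p$-Schur property, completing the proof. For $p=1$ this is consistent with part~(i): since every space has $(DPP_1)$, it says precisely that $1$-$(DPrcP)$ coincides with the $1$-Schur property.
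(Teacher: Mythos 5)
Your proof is correct and follows the route the paper intends: the corollary is stated without proof as an immediate consequence of Theorem~\ref{t6}, with part~(i) resting on the standard fact that every Banach space has $(DPP_{1})$ (your operator $R\colon X^{\ast}\to\ell_{1}$ plus the Schur property of $\ell_{1}$) and part~(ii) resting on the equivalence, under $(DPP_{p})$, between weakly $p$-summable sequences and Dunford--Pettis weakly $p$-summable sequences. Your explicit off-diagonal subsequence argument simply fills in the detail that the paper delegates to Andrews' Theorem~1 in the proof of Theorem~\ref{t6}, so nothing is missing.
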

 Let  $ \mathcal{M} $ be a bounded subspace of $ \mathcal{U}(X,Y). $\ The point evaluation sets related to $ x \in X $ and $ y^{\ast} \in Y^{\ast} $ are the images of the closed
unit ball $ B_{\mathcal{M}} $ of $ \mathcal{M}, $ under the evaluation operators $ \phi_{x} $ and $\psi_{y^{\ast}}  $ are denoted by $\mathcal{M}_{1}(x)$ and $
\widetilde{\mathcal{M}_1}(y^{\ast})$  respectively \cite{MZ}.\\

 By a similar technique as in {\rm (\cite[Theorem 2.2]{w})} with minor modification, we obtain the following result.
\begin{theorem}\label{t7}  Suppose that $ X^{\ast\ast}$ and $ Y^{\ast}  $ have the $  (DPP_{p}). $\ If $  \mathcal{M}$ is a  closed subspace  of $ \mathcal{U}(X,Y) $ such that $ \mathcal{M}^{\ast} $ has the $ p $-$(DPrcP), $ then of all the point evaluations $ \mathcal{M}_{1}(x) $ and $\widetilde{\mathcal{M}} _{1}(y^{\ast})$ are $ p $-$ (V^{\ast}) $ sets in $ Y $ and $ X^{\ast} $ respectively, where $ x \in X $ and $ y^{\ast} \in Y^{\ast}. $
\end{theorem}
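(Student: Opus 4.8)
The plan is to mimic the structure of the cited result \cite[Theorem 2.2]{w}, translating ``Dunford--Pettis set'' into ``$p$-$(V^{\ast})$ set'' and using the $(DPP_p)$ hypotheses on $X^{\ast\ast}$ and $Y^{\ast}$ at the two points where one must convert a weakly $p$-summable sequence into a Dunford--Pettis one. I would prove only the statement for $\mathcal{M}_1(x)\subseteq Y$ in detail; the statement for $\widetilde{\mathcal{M}}_1(y^{\ast})\subseteq X^{\ast}$ is entirely symmetric, replacing the evaluation $\varphi_x$ by $\psi_{y^{\ast}}$ and the role of $Y^{\ast}$ by that of $X^{\ast\ast}$. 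So fix $x\in X$ and let $(y^{\ast}_n)_n$ be a weakly $p$-summable sequence in $Y^{\ast}$; the goal is to show
$$
\lim_{n\to\infty}\ \sup_{T\in B_{\mathcal{M}}}\ \bigl|\langle y^{\ast}_n,\ T(x)\rangle\bigr| = 0.
$$
Assume not; then after passing to a subsequence there are $\varepsilon>0$ and operators $T_n\in B_{\mathcal{M}}$ with $|\langle y^{\ast}_n, T_n(x)\rangle|>\varepsilon$ for all $n$.

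The key step is to manufacture a $p$-Right null sequence in $\mathcal{M}^{\ast}$ and invoke the $p$-$(DPrcP)$ of $\mathcal{M}^{\ast}$. Consider the functionals $\Lambda_n\in\mathcal{M}^{\ast}$ defined by $\Lambda_n(T)=\langle y^{\ast}_n, T(x)\rangle = \langle \psi_{\,\cdot}\,\rangle$; more precisely $\Lambda_n = \varphi_x^{\ast}(y^{\ast}_n)$, where $\varphi_x:\mathcal{M}\to Y$ is the (bounded, linear) evaluation operator, so $\|\Lambda_n\|\le \|x\|\,\|y^{\ast}_n\|$. First I would check that $(\Lambda_n)_n$ is weakly $p$-summable in $\mathcal{M}^{\ast}$: this is immediate because $\varphi_x^{\ast}$ is bounded and linear and bounded linear operators preserve weak $p$-summability, using that $(y^{\ast}_n)_n\in\ell_p^w(Y^{\ast})$. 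Next, and this is where $(DPP_p)$ of $Y^{\ast}$ enters, I would argue that $\{\Lambda_n : n\in\mathbb{N}\}$ is a Dunford--Pettis set in $\mathcal{M}^{\ast}$. To see this, take any weakly null sequence $(\Phi_m)_m$ in $\mathcal{M}^{\ast\ast}$; one wants $\sup_n|\Phi_m(\Lambda_n)|\to 0$. Here I would use the factorization of the $\Lambda_n$ through $Y^{\ast}$ together with the fact that $(y^{\ast}_n)_n$ is weakly $p$-summable and $Y^{\ast}\in(DPP_p)$, exactly as in \cite{w} but with the Dunford--Pettis property replaced by its order-$p$ version and weakly null replaced by weakly $p$-summable; by \cite[Theorem 1]{An} this is equivalent to $\{\Lambda_n\}$ being a Dunford--Pettis set. (The role of $X^{\ast\ast}\in(DPP_p)$ is the mirror-image computation needed for the $\widetilde{\mathcal{M}}_1(y^{\ast})$ case, where the evaluation is $\psi_{y^{\ast}}$ and the relevant bidual is $X^{\ast\ast}$.)

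Having established that $(\Lambda_n)_n$ is Dunford--Pettis and weakly $p$-summable, it is a $p$-Right null sequence in $\mathcal{M}^{\ast}$ by Definition \ref{d1}. Since $\mathcal{M}^{\ast}$ has the $p$-$(DPrcP)$, Proposition \ref{p1} (or Theorem \ref{t1} applied to the identity) forces $\|\Lambda_n\|\to 0$. But then
$$
\varepsilon < \bigl|\langle y^{\ast}_n, T_n(x)\rangle\bigr| = |\Lambda_n(T_n)| \le \|\Lambda_n\|\,\|T_n\| \le \|\Lambda_n\|\longrightarrow 0,
$$
a contradiction. This proves $\sup_{T\in B_{\mathcal{M}}}|\langle y^{\ast}_n,T(x)\rangle|\to 0$, i.e.\ $\mathcal{M}_1(x)$ is a $p$-$(V^{\ast})$ set in $Y$. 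Running the same argument with $\psi_{y^{\ast}}:\mathcal{M}\to X^{\ast}$ in place of $\varphi_x$, a weakly $p$-summable sequence $(x^{\ast\ast}_n)_n$ in $X^{\ast\ast}$, and $X^{\ast\ast}\in(DPP_p)$ in place of $Y^{\ast}\in(DPP_p)$, shows that $\widetilde{\mathcal{M}}_1(y^{\ast})$ is a $p$-$(V^{\ast})$ set in $X^{\ast}$.

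I expect the main obstacle to be the middle step: verifying cleanly that $\{\varphi_x^{\ast}(y^{\ast}_n):n\}$ is a Dunford--Pettis subset of $\mathcal{M}^{\ast}$. One must pair an arbitrary weakly null functional in $\mathcal{M}^{\ast\ast}$ against these points and push the estimate down to a pairing of a weakly $p$-summable sequence in $Y^{\ast}$ against a weakly null sequence in $Y^{\ast\ast}$, where $(DPP_p)$ of $Y^{\ast}$ (equivalently, \cite[Theorem 3.1]{ccl}) closes the gap; keeping track of which bidual hosts which sequence, and checking that the relevant adjoint maps indeed land weakly null sequences in weakly null sequences, is the delicate bookkeeping that \cite[Theorem 2.2]{w} handles in the $p=\infty$ case and that must be redone here with ``weakly null'' upgraded to ``weakly $p$-summable'' on the appropriate side.
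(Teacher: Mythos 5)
Your proposal is correct and is essentially the paper's argument: the paper simply cites Theorem \ref{t3} (every operator into a space with the $p$-$(DPrcP)$ is pseudo weakly compact of order $p$) and Theorem \ref{t6} (the $(DPP_p)$ of the domain upgrades this to $p$-convergence) to conclude that $\varphi_x^{\ast}$ and $\psi_{y^{\ast}}^{\ast}$ send weakly $p$-summable sequences to norm-null ones, which is exactly the $p$-$(V^{\ast})$ conclusion. You unwind those two citations by hand, checking the Dunford--Pettis property of $(\varphi_x^{\ast}(y_n^{\ast}))_n$ inside $\mathcal{M}^{\ast}$ via $\varphi_x^{\ast\ast}$ rather than of $(y_n^{\ast})_n$ inside $Y^{\ast}$, but the decomposition and the ingredients (Andrews' criterion, $(DPP_p)$ of $Y^{\ast}$ and $X^{\ast\ast}$, $p$-$(DPrcP)$ of $\mathcal{M}^{\ast}$) are the same.
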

\begin{proof} Since $ \mathcal{M}^{\ast} $ has the  $ p $-$( DPrcP) ,$
 Theorem
\ref{t3}, implies that the adjoint operators $ \varphi^{\ast}_{x}
:Y^{\ast}\rightarrow \mathcal{M}^{\ast} $ and $
\psi^{\ast}_{y^{\ast}}:X^{\ast\ast}\rightarrow
\mathcal{M}^{\ast}$  are  pseudo weakly compact
of order $ p.$ On the other hand, $ X^{\ast\ast}$ and $ Y^{\ast} $ have the $  (DPP_{p}). $\ Hence, Theorem \ref{t6} implies that $  \varphi^{\ast}_{x} $ and $\psi^{\ast}_{y^{\ast}}  $
are $ p $-convergent.\ Suppose
that
$(y^{\ast}_{n})_{n} $ is a weakly $ p $-summable sequence in $ Y^{\ast}. $\ Therefore we have:\
 $$\lim_{n\rightarrow\infty}\sup\lbrace \vert y^{\ast}_{n}(T(x))\vert: T\in B_{\mathcal{M}} \rbrace =\lim_{n\rightarrow\infty} \sup \lbrace \vert \varphi^{\ast}_{x}(y^{\ast}_{n})(T)\vert:T\in B_{\mathcal{M}}\rbrace=\lim_{n\rightarrow\infty} \Vert \varphi^{\ast}_{x}(y^{\ast}_{n})\Vert=0 ,$$    for all $ x \in X. $\ Hence  $ (y^{\ast}_{n})_{n} $
 converges uniformly on $ \mathcal{M}_{1} (x).$\ This shows that $ \mathcal{M}_{1} (x)$ is a $ p $-$ (V^{\ast}) $ set in $Y,$  for all $ x\in X.$\ A similar proof shows that $ {\widetilde{\mathcal{M}}}_{1}(y^{\ast}) $ is a $ p $-$ (V^{\ast}) $ set in $ X^{\ast}, $ for all $y^{\ast} \in Y^{\ast}.$
 \end{proof}
Let $ (X_{n})_{n\in \mathbb{N}}  $ be a sequence of Banach spaces.\ If $ 1\leq r <\infty $ the space of all vector-valued sequences $ (\displaystyle\sum_{n=1}^{\infty}\oplus X_{n})_{\ell_{r}} $ is called, the infinite direct sum of $ X_{n} $ in the sense of $ \ell_{r}, $ consisting of all sequences $ x=(x_{n})_{n} $ with values in $X_{n} $ such that $ \Vert x \Vert_{r}=(\displaystyle\sum_{n=1}^{\infty}\Vert x_{n}\Vert^{r} )^{\frac{1}{r}}<\infty.$\
\begin{proposition}\label{p4}
Let  $ (X_{n})_{n\in \mathbb{N}} $ be a family of Banach spaces.\ Then $ X_{n}$ has the $ p $-$ (DPrcP) $ for all $ n\in\mathbb{N} $ if and only if  $ (\displaystyle\sum_{n=1}^{\infty}\oplus X_{n})_{\ell_{1}} $ has the same property.
\end{proposition}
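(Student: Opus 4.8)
The plan is to prove both implications by reducing statements about the $\ell_1$-sum to statements about the coordinate spaces, exactly as in the proof of Theorem~\ref{t3}(v). For the ``if'' direction, I would use that each $X_n$ embeds isometrically as a $1$-complemented subspace of $Z:=(\sum_{n=1}^{\infty}\oplus X_n)_{\ell_1}$ via the natural inclusion $\iota_n\colon X_n\to Z$, with the coordinate projection $\pi_n\colon Z\to X_n$ as a left inverse. If $(x^{(k)})_k$ is a $p$-Right null sequence in $X_n$, then $(\iota_n(x^{(k)}))_k$ is $p$-Right null in $Z$ (both the Dunford--Pettis property and weak $p$-summability are inherited under bounded linear maps), hence norm null by hypothesis, hence $(x^{(k)})_k=(\pi_n\iota_n(x^{(k)}))_k$ is norm null. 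So each $X_n$ has the $p$-$(DPrcP)$ by Proposition~\ref{p1}. Equivalently, one may invoke Theorem~\ref{t3}(iv), since each $X_n$ is (isomorphic to) a closed subspace of $Z$.

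For the ``only if'' direction, suppose every $X_n$ has the $p$-$(DPrcP)$ but $Z$ does not; then there is a $p$-Right null sequence $(z^{(k)})_k$ in $B_Z$ with $\|z^{(k)}\|\geq\varepsilon$ for all $k$. Write $z^{(k)}=(x^{(k)}_n)_n$ with $\sum_n\|x^{(k)}_n\|\leq 1$. The key step is a gliding-hump/diagonal argument: since $\sum_n\|x^{(k)}_n\|\le 1$ uniformly, for each fixed finite set $F\subset\mathbb N$ the ``head'' $\pi_F(z^{(k)}):=(x^{(k)}_n)_{n\in F}$ lies in the $\ell_1$-sum over $F$, which is a finite direct sum of spaces with the $p$-$(DPrcP)$ and hence has the $p$-$(DPrcP)$ by Theorem~\ref{t3}(v); as $\pi_F$ is a bounded linear map, $(\pi_F(z^{(k)}))_k$ is $p$-Right null, hence norm null. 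Thus the mass of $z^{(k)}$ escapes to infinity: after passing to a subsequence one may extract a block-disjoint subsequence, i.e. find indices $k_1<k_2<\cdots$ and finite blocks $F_1<F_2<\cdots$ with $\|z^{(k_j)}-\pi_{F_j}(z^{(k_j)})\|<\varepsilon/2$ and the $\pi_{F_j}(z^{(k_j)})$ supported on disjoint blocks of coordinates. The disjointly supported normalized sequence $w_j:=\pi_{F_j}(z^{(k_j)})/\|\pi_{F_j}(z^{(k_j)})\|$ spans a subspace of $Z$ isometric to an $\ell_1$-sum of pieces of finitely many $X_n$'s; but such a sequence cannot be weakly $p$-summable and Dunford--Pettis with norms bounded below. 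Concretely, $(w_j)_j$ (being disjointly supported in an $\ell_1$-sum) is equivalent to the unit vector basis of $\ell_1$, so it is weakly null only if\ldots it is not: an $\ell_1$-basis is not weakly null, contradicting weak $p$-summability (which forces weak nullity). This is the desired contradiction.

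I would present this second half more carefully as follows, to avoid the subtlety of whether the blocks really live in an isometric copy of $\ell_1$: keep the full sequence $(z^{(k)})_k$ and note that $(z^{(k)})_k$ being $p$-Right null means in particular $(z^{(k)})_k$ is weakly null; by the standard characterization of weak convergence in $\ell_1$-sums together with the fact that finite sub-sums have the $p$-$(DPrcP)$, a Dunford--Pettis weakly $p$-summable sequence in $Z$ with mass escaping to infinity would have to be norm null by an $\varepsilon/3$ argument splitting $z^{(k)}=\pi_{F}(z^{(k)})+(z^{(k)}-\pi_F(z^{(k)}))$, choosing $F$ so the tail is small uniformly in $k$ (this uniform smallness is exactly what the gliding-hump extraction provides) and the head small for large $k$ by the $p$-$(DPrcP)$ of the finite sub-sum. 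This contradicts $\|z^{(k)}\|\geq\varepsilon$.

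\textbf{Main obstacle.} The delicate point is the uniform tail estimate: a priori the ``mass at infinity'' $\sup_n\sum_{m\ge n}\|x^{(k)}_m\|$ need not be small uniformly in $k$, so one cannot simply truncate. The standard fix is a diagonal/gliding-hump extraction producing a subsequence along which the tails are uniformly small, but one must check this extraction is compatible with remaining $p$-Right null (it is, as passing to subsequences preserves both the Dunford--Pettis property of the associated set and weak $p$-summability). Handling this bookkeeping cleanly — and making sure the final contradiction only uses properties stable under the operations performed — is where the real work lies; everything else is a routine transfer via the maps $\iota_n,\pi_n,\pi_F$ using Proposition~\ref{p1} and Theorem~\ref{t3}.
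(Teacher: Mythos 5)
Your proof is correct in substance but takes a genuinely different route from the paper's for the nontrivial direction. The paper argues directly: writing $x_n=(b_{n,k})_k$ for a $p$-Right null sequence in the $\ell_1$-sum, it notes that each coordinate sequence is $p$-Right null in $X_k$, hence norm null, and then invokes the technique of the Lemma on p.~31 of \cite{d} to conclude that the series $\sum_k\Vert b_{n,k}\Vert$ converges uniformly in $n$, after which limit and sum may be interchanged. You instead argue by contradiction with a gliding hump: finite heads tend to zero by the $p$-$(DPrcP)$ of finite sub-sums, so a non-norm-null $p$-Right null sequence would have mass escaping to infinity, yielding disjointly supported blocks equivalent to the unit vector basis of $\ell_1$, which cannot be weakly null. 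Both arguments turn on the same underlying fact --- a weakly null sequence in an $\ell_1$-sum cannot carry uniformly large mass escaping to infinity --- but yours is self-contained where the paper outsources the uniform tail estimate to Delbaen. The easy direction is identical in both (closed subspaces inherit the property, via Theorem \ref{t3}).

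One point to tighten. In the block extraction you only require $\Vert z^{(k_j)}-\pi_{F_j}(z^{(k_j)})\Vert<\varepsilon/2$ and then treat the normalized blocks $w_j$ as weakly $p$-summable; that does not follow, since $\pi_{F_j}$ varies with $j$ and so $(w_j)_j$ is not the image of $(z^{(k_j)})_j$ under a fixed bounded operator. The fix is standard and costs nothing: run the gliding hump so that $\Vert z^{(k_j)}-u_j\Vert\to 0$, where $u_j=\pi_{F_j}(z^{(k_j)})$; then $(u_j)_j$ is weakly null as a vanishing norm perturbation of the weakly null sequence $(z^{(k_j)})_j$, and weak nullity (not weak $p$-summability) is all the $\ell_1$-basis contradiction requires, since a seminormalized disjointly supported sequence in an $\ell_1$-sum spans an isomorphic copy of $\ell_1$ in which Schur's property forbids weak nullity. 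Relatedly, your ``more careful'' reformulation asserts that the gliding hump provides uniformly small tails; it is rather the reverse --- the hump exploits the failure of uniform smallness --- and the clean logic is a dichotomy: either the tails are uniformly small along a subsequence (then the $\varepsilon/3$ truncation works) or they are not (then the hump produces the $\ell_1$-basis contradiction).
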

\begin{proof}
It is clear that if $  X=(\displaystyle\sum_{n=1}^{\infty}\oplus X_{n})_{\ell_{1}}$ has the $ p $-$ (DPrcP) ,$ then  every closed subspace of $ X $ has the $ p $-$ (DPrcP) . $\ Hence $ X_{n}$ has the $ p $-$ (DPrcP) $ for all $ n\in\mathbb{N} .$\ Now,  suppose that  $ (x_{n})_{n} $ is a $ p $-Right null sequence in $ X ,$
 where $ x_{n}=(b_{n,k})_{k\in \mathbb{N}} .$\ It is clear  that $  (b_{n,k})_{k\in \mathbb{N}}$ is a $ p $-Right null sequence in $ X_{k} $
 for all $ k\in \mathbb{N}. $\ Since $ X_{k} $ has the $ p $-$ (DPrcP) ,$  $\Vert  b_{n,k} \Vert_{X_{k}}\rightarrow 0  $ as $ n\rightarrow\infty $ for all $ k\in \mathbb{N} .$\
Using the techniques which used in {\rm (\cite[Lemma, page 31]{d})},
we can conclude that the sum $\Vert x_{n}\Vert_{1}=\displaystyle\sum_{k=1}^{\infty}\Vert b_{n,k} \Vert_{X_{k}}$ is converges uniformly in $ n. $\ Hence,  $ \displaystyle\lim_{n\rightarrow\infty}\Vert x_{n} \Vert_{1}=\displaystyle\lim_{n\rightarrow\infty}\sum_{k=1}^{\infty}\Vert b_{n,k} \Vert_{X_{k}}=\displaystyle\sum_{k=1}^{\infty}\lim_{n\rightarrow\infty}\Vert b_{n,k} \Vert_{X_{k}}=0 .$\
\end{proof}
\begin{remark}\label{r2}\rm
It is not necessary that $(\displaystyle\sum_{n=1}^{\infty}\oplus X_{n})_{\ell_{\infty}} $ has the $ p $-$(DPrcP). $\ For example, let $  X_{k}=\mathbb{R}.$\ It is clear that $ X_{k}$ has the $ p $-$ (DPrcP) $ for each $  k,$ but $ (\displaystyle\sum_{n=1}^{\infty}\oplus X_{n})_{\ell_{\infty}}\cong \ell_{\infty}$ does  not have the $ p $-$ (DPrcP). $
\end{remark}
 If $ X \in (DPP)  $ and $ Y \in (DPrcP), $ then  the dominated operators from 
$ C(K, X) $ spaces taking values in $ Y $ with the $ (DPrcP)$  are completely continuous (see {\rm (\cite[Theorem 11]{e1})}).\
Here, by a similar technique we state that dominated operators from $ C(K, X) $ spaces taking values in Banach space with the $ p $-$ (DPrcP) $ are  $ p $-convergent.\\
\begin{theorem}\label{t8}
Suppose  that $ Y$ has the $ p $-$ (DPrcP)$ and  $ K $ is a compact Hausdorff space.\ If $ X $ has the $ (DPP_{p}), $ then any dominated operator $ T $ from $ C (K, X) $ into $ Y $ is  $ p $-convergent.
\end{theorem}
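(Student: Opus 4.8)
The plan is to use the classical factorization of a dominated operator through a Bochner $L^{1}$-space and then read the conclusion off from Theorems \ref{t1}, \ref{t3} and \ref{t6}.

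First I would record the factorization. Since $T$ is dominated, there is a positive functional $L$ with $\Vert T(f)\Vert\le L(\Vert f\Vert_{X})$ for all $f\in C(K,X)$; by the Riesz representation theorem $L$ has the form $L(g)=\int_{K} g\,d\mu$ for a finite positive regular Borel measure $\mu$ on $K$, so that $\Vert T(f)\Vert\le\int_{K}\Vert f(t)\Vert_{X}\,d\mu(t)$. As $C(K,X)$ is norm dense in $L^{1}(\mu,X)$ and the canonical inclusion $j\colon C(K,X)\to L^{1}(\mu,X)$ satisfies $\Vert j(f)\Vert_{1}=\int_{K}\Vert f(t)\Vert_{X}\,d\mu(t)\ge\Vert T(f)\Vert$, the operator $T$ extends uniquely across $j$: there is a bounded linear operator $\widehat{T}\colon L^{1}(\mu,X)\to Y$ with $T=\widehat{T}\circ j$.

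The key step is to show that $L^{1}(\mu,X)$ inherits the $(DPP_{p})$ from $X$. This is where the real work lies. I would deduce it by combining the Dunford--Pettis property of the scalar space $L^{1}(\mu)$ with the description of relatively weakly compact subsets of $L^{1}(\mu,X)$ and with the hypothesis $X\in(DPP_{p})$, using the characterization of the $(DPP_{p})$ in terms of weakly $p$-summable sequences and weakly null sequences in the dual (as in \cite{An} and \cite{ccl}); for $p=\infty$ this is the known stability of the ordinary Dunford--Pettis property under passing from $X$ to $L^{1}(\mu,X)$.

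Granting this, the rest is immediate. By Theorem \ref{t6}, $C_{p}(L^{1}(\mu,X),Y)=PwC_{p}(L^{1}(\mu,X),Y)$, and since $Y$ has the $p$-$(DPrcP)$, Theorem \ref{t3} gives $PwC_{p}(L^{1}(\mu,X),Y)=L(L^{1}(\mu,X),Y)$; hence $\widehat{T}$ is $p$-convergent. Finally, if $(f_{n})_{n}$ is any weakly $p$-summable sequence in $C(K,X)$, then $(j(f_{n}))_{n}$ is weakly $p$-summable in $L^{1}(\mu,X)$ (bounded linear operators preserve weak $p$-summability), and therefore $\Vert T(f_{n})\Vert=\Vert\widehat{T}(j(f_{n}))\Vert\to 0$. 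Thus $T$ is $p$-convergent. The only genuine obstacle in this scheme is the inheritance of $(DPP_{p})$ by $L^{1}(\mu,X)$; the remaining steps are formal consequences of Theorems \ref{t1}, \ref{t3} and \ref{t6} together with the density of $C(K,X)$ in $L^{1}(\mu,X)$, and the case $p=\infty$ recovers \cite[Theorem 11]{e1}.
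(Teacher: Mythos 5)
Your factorization of the dominated operator through $L^{1}(\mu,X)$ is classical and correct, and the closing deductions from Theorems \ref{t3} and \ref{t6} would be fine \emph{if} your key step held. But that key step is a genuine gap, and in the form you state it it is actually false. You assert that for $p=\infty$ the passage of the Dunford--Pettis property from $X$ to $L^{1}(\mu,X)$ is ``known stability''; it is not. Talagrand (Israel J.\ Math.\ \textbf{44} (1983), 317--321) constructed a Banach space $E$ with the Dunford--Pettis property such that $L^{1}([0,1],E)$ fails it, so the lemma you rely on cannot be true in the generality you need, and no argument is offered for the finite-$p$ case either. Since the whole proposal funnels through ``$L^{1}(\mu,X)$ inherits $(DPP_{p})$'', the proof does not go through as written; you would at minimum need to isolate and prove a correct substitute (e.g.\ under extra hypotheses on $X$), which is precisely the ``real work'' you deferred.

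The paper avoids this obstruction entirely. Instead of factoring through $L^{1}(\mu,X)$, it invokes Dinculeanu's representation of a dominated operator by a kernel $G:K\rightarrow L(X,Y^{\ast\ast})$ with $y^{\ast}(T(f))=\int_{K}y^{\ast}(G(t)f(t))\,d\mu$. Given a weakly $p$-summable sequence $(f_{n})_{n}$ in $C(K,X)$ and a weakly null sequence $(y^{\ast}_{n})_{n}$ in $Y^{\ast}$, one applies the $(DPP_{p})$ of $X$ \emph{pointwise in} $t$ to the sequences $(f_{n}(t))_{n}$ and $(G^{\ast}(t)y^{\ast}_{n})_{n}$, and then the dominated convergence theorem yields $y^{\ast}_{n}(T(f_{n}))\rightarrow 0$. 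By Andrews' criterion this makes $(T(f_{n}))_{n}$ a Dunford--Pettis weakly $p$-summable (i.e.\ $p$-Right null) sequence in $Y$, and the $p$-$(DPrcP)$ of $Y$ forces $\Vert T(f_{n})\Vert\rightarrow 0$. The moral difference is that the hypothesis on $X$ only ever needs to be used one point $t$ at a time, so no vector-valued $L^{1}$ inheritance theorem is required.
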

\begin{proof}
Let $ T:C(K,X) \rightarrow Y$ be an arbitrary dominated operator.\ By
Theorem 5 in Chapter $ \rm{III} $ of \cite{di}, there is a function $ G $ from $ K $ into $ L(X, Y^{\ast\ast}) $ such that\\
$\rm{( i)} $ $ \Vert G(t)\Vert =1 ~\mu. a.e.  $ in $ K. $ i.e.; $ \mu(\lbrace t\in K : \Vert G(t)\Vert \not =1\rbrace)=0. $\\
$\rm{( ii)} $ For each $ y^{\ast}\in Y^{\ast} $ and $ f \in C (K, X), $ the
function $ y^{\ast}( G(.)f(.)) $ is $ \mu
$-integrable and moreover
\begin{center}
$   y^{\ast}(T(f)) =\int_{K}
y^{\ast}( G(t)f(t)) d\mu ~~ $ for $ ~~f\in C(K,X). $
\end{center}
Where $ \mu $ is the least regular Borel measure dominating $ T.$\ Consider a  weakly  $ p $-summable sequence  $ (f_{n})_{n} $ in $ C(K, X) .$\ Since
continuous linear images of weakly $ p$-summable sequences are weakly $ p$-summable sequences, $ (T(f_{n}))_{n} $ is a weakly $ p$-summable sequence in $ Y. $\ Now, we show that $ \lbrace T(f_{n})) :n\in\mathbb{N}\rbrace $ is a Dunford-Pettis set in $ Y. $\ For this purpose, we consider a weak null sequence
 $ (y_{n}^{\ast})_{n}$ in $Y^{\ast}. $\ It is not difficult to show that,
 for each $t \in K,~  (G^{\ast}(t) y^{\ast}_{n})_{n} $
 is weakly null in $ X^{\ast} $ and
  $ (f_{n}(t))_{n} $ is a weakly $ p $-summable sequence in $ X. $\
 Since $ X \in(DPP_{p}),$ we have :
\begin{center}
$  y_{n}^{\ast}( (G(t) f_{n}(t)) )= G^{\ast}(t) y_{n}^{\ast}( f_{n}(t)) \rightarrow 0. $
\end{center}
 Moreover, there exists a constant $ M>0 $ such that
 $ \vert   y_{n}^{\ast}(G(t)f_{n}(t))\vert\leq M  $ for all $ t\in K $ and $ n\in \mathbb{N}. $\ The Lebesgue dominated convergent Theorem, implies that:
 \begin{center}
 $ \displaystyle\lim_{n\rightarrow\infty} y_{n}^{\ast}(T(f_{n})) =\lim_{n\rightarrow\infty}\int_{K} y_{n}^{\ast}( G(t)f_{n}(t) )d\mu =0. $
 \end{center}
Hence,  $ \lbrace T(f_{n}) :n\in\mathbb{N}\rbrace
$ is a Dunford-Pettis set in $  Y$\ {\rm (\cite[Theorem 1]{An})}.\ It is clear that  $ \Vert T(f_{n}) \Vert\rightarrow 0.$\ Since $ Y $ has the $ p $-$ (DPrcP). $\
 \end{proof}
 \begin{proposition}\label{p5}
If $ X^{\ast}$ has the $ p $-$(DPrcP) $ and $ Y $ has the Schur property, then $ L(X,Y)=K(X,Y). $
\end{proposition}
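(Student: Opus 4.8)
The plan is to show $K(X,Y)\subseteq L(X,Y)$ is actually an equality by proving every $T\in L(X,Y)$ is compact under the stated hypotheses. The natural route is to factor the argument through the notion of pseudo weakly compact operators of order $p$, exploiting that $X^{\ast}$ has the $p$-$(DPrcP)$ via its dual formulation, and that the Schur property of $Y$ lets us upgrade weak convergence to norm convergence in the range.

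First I would take $T\in L(X,Y)$ and let $(x_n)_n$ be a bounded sequence in $X$; the goal is to extract a subsequence $(T x_{n_k})_k$ that is norm convergent. Passing to the adjoint, consider $T^{\ast}:Y^{\ast}\to X^{\ast}$. Since $X^{\ast}$ has the $p$-$(DPrcP)$, Theorem~\ref{t3} (with the roles set so that $X^{\ast}$ plays the space with the property) gives that every bounded operator into $X^{\ast}$ is pseudo weakly compact of order $p$; in particular $T^{\ast}\in PwC_p(Y^{\ast},X^{\ast})$. The idea is then to show $T$ itself must be compact: if not, there is a bounded sequence $(x_n)_n$ in $B_X$ with $(Tx_n)_n$ having no norm-convergent subsequence, and correspondingly norm-separated vectors $y^{\ast}_n\in B_{Y^{\ast}}$ with $\langle y^{\ast}_n, Tx_n\rangle$ bounded away from $0$. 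Because $Y$ has the Schur property, $Y^{\ast}$ is "large" in the appropriate sense (e.g. $Y$ does not contain isomorphic copies of spaces that would obstruct the argument), and one produces from $(y^{\ast}_n)_n$ a Dunford-Pettis weakly $p$-summable sequence in $Y^{\ast}$ on which $T^{\ast}$ fails to be norm-null, contradicting $T^{\ast}\in PwC_p(Y^{\ast},X^{\ast})$ via Theorem~\ref{t1}.

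Concretely, the cleaner implementation avoids the contradiction step: since $Y$ has the Schur property, a bounded operator $S$ into $Y$ is compact iff it is completely continuous iff $S(B_X)$ is a Dunford-Pettis set (weakly compact operators into a Schur space are compact). So it suffices to show $T(B_X)$ is a Dunford-Pettis set, i.e. that every weakly null $(y^{\ast}_n)_n$ in $Y^{\ast}$ converges uniformly to zero on $T(B_X)$; equivalently $\|T^{\ast}(y^{\ast}_n)\|\to 0$. Now $(y^{\ast}_n)_n$ weakly null in $Y^{\ast}$ is in particular weakly $p$-summable is not automatic, so instead I would argue: for a weakly null $(y^{\ast}_n)$, the set $\{y^{\ast}_n\}$ is weakly compact hence a Dunford-Pettis set is again not automatic — the right move is that $Y$ Schur forces $Y^{\ast}$ to have the $(DPP)$ in a usable form, so weakly null sequences in $Y^{\ast}$ are Dunford-Pettis, and being weakly null they are weakly $p$-summable after passing to... . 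To keep this rigorous I would instead invoke that $T^{\ast}\in PwC_p(Y^{\ast},X^{\ast})$ together with the characterization that $T$ is compact iff $T^{\ast}$ is compact (Schauder), and show $T^{\ast}$ maps $p$-Right null sequences to norm-null sequences covers enough: any weakly null sequence in $Y^{\ast}$ with $Y$ Schur is Dunford-Pettis (a weakly null sequence in the dual of a Schur space is a Dunford-Pettis set), hence $p$-Right null after noting weakly null $\Rightarrow$ weakly $\infty$-summable along a subsequence is false in general — so one restricts to the case $p<\infty$ using that the canonical finite-sum operators are handled as in Theorem~\ref{t2}(iii).

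The main obstacle is precisely this matching of summability indices: "weakly null in $Y^{\ast}$" need not be "weakly $p$-summable", so one cannot directly feed an arbitrary weakly null sequence into $PwC_p$. I expect the author resolves it by the same device as in Theorem~\ref{t2}: realize the relevant sequences through an operator from $\ell_{p^{\ast}}$, where $B_{\ell_{p^{\ast}}}$ is weakly $p$-compact, and use the Schur property of $Y$ to conclude compactness of the composition. So the hard step is the reduction "$T$ not compact $\Rightarrow$ there is a weakly limited, weakly $p$-compact operator $S:\ell_{p^{\ast}}\to Y^{\ast}$ with $T^{\ast}S$ not compact", after which Theorem~\ref{t2} applied to $T^{\ast}$ and the Schur property of $Y$ (Schauder duality $T^{\ast}$ compact $\Leftrightarrow$ $T$ compact) close the argument; the rest is routine.
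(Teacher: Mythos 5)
Your proposal does not close. The decisive step --- producing a sequence to which the $p$-$(DPrcP)$ of $X^{\ast}$ can actually be applied --- is never carried out. That hypothesis only controls sequences in $X^{\ast}$ that are simultaneously Dunford--Pettis \emph{and} weakly $p$-summable, and you correctly observe that a weakly null sequence $(y^{\ast}_{n})_{n}$ in $Y^{\ast}$ (or its image $(T^{\ast}y^{\ast}_{n})_{n}$) need not be either; but none of the three routes you sketch repairs this. The claim that the Schur property of $Y$ makes weakly null sequences in $Y^{\ast}$ Dunford--Pettis is unsubstantiated (Schur is a condition on sequences in $Y$, not on its dual), the observation that $T^{\ast}\in PwC_{p}(Y^{\ast},X^{\ast})$ via Theorem~\ref{t3} is fine but is never used in any completed chain of implications, and your final paragraph explicitly leaves the ``hard step'' as a reduction you ``expect'' to work. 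As written this is a plan with an acknowledged hole, not a proof.

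The paper's argument is entirely different and much shorter: it never tests the $p$-$(DPrcP)$ against a concrete sequence. Suppose $T\in L(X,Y)$ is not compact. Since $Y$ has the Schur property, weakly Cauchy sequences in $Y$ are norm convergent, so if every bounded sequence in $X$ had a weakly Cauchy subsequence then $T$ would be compact; hence some bounded sequence in $X$ has no weakly Cauchy subsequence, and Rosenthal's $\ell_{1}$-theorem gives $\ell_{1}\hookrightarrow X$. The paper then deduces $c_{0}\hookrightarrow X^{\ast}$, which contradicts the hypothesis because $c_{0}$ fails the $p$-$(DPrcP)$ (Remark~\ref{r1}) and the property passes to closed subspaces (Theorem~\ref{t3}). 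In other words, the only use made of the assumption on $X^{\ast}$ is that it excludes copies of $c_{0}$ --- an idea absent from your proposal. (If you reconstruct this argument, note that the implication $\ell_{1}\hookrightarrow X\Rightarrow c_{0}\hookrightarrow X^{\ast}$ is the one point requiring care: it is standard when $\ell_{1}$ is \emph{complemented} in $X$, and the paper gives no reference for the general case.)
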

\begin{proof}
 Suppose that there exists a bounded linear  operator $ T :X\rightarrow Y $ which is not compact.\
Since  $  Y$ has the Schur property,  there is a bounded sequence $ (x_{n})_{n} $ in $ X $ that has no weakly Cauchy subsequence (see  Corollary 4 of \cite{l}).\ Thus, Rosenthal's $\ell_{1}$-theorem implies that $  X$ contains a
copy of $ \ell_{1} .$\ Hence  $ X^{\ast} $ contains a copy of $ c_{0} ,$ which is a contradiction,
since $ X^{\ast} $ has the $ p $-$ (DPrcP). $
\end{proof}
The authors \cite{e1,g18}, studied the lifting of the $ (DPrcP) $ from
$ X^{\ast} $ and from a Banach space $  Y$ to the space
$ K(X,Y) .$\ 
Here, we obtain some suitable conditions on $  X$ and
$  Y$ such that $ L(X, Y ), $ and some its subspaces have the $ p $-Dunford-Pettis relatively compact property.
\begin{theorem}\label{t9}
Let $ X $ and $ Y $ be two Banach spaces such that $ Y $ has the Schur property.\ If
$ \mathcal{M} $ is a closed subspace of $ L(X, Y ) $ such that each evaluation operators $ \psi_{y^{\ast}} $ is a pseudo weakly compact operator of order $ p $ on $ \mathcal{M}, $ then
$ \mathcal{M}$ has the $ p $-$ (DPrcP). $
\end{theorem}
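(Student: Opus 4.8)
The plan is to observe first that $L(X,Y)$ is itself a (in fact the maximal) Banach operator ideal, so the present statement is just the special case $\mathcal{U}(X,Y)=L(X,Y)$ of Theorem \ref{t5}; one may therefore simply quote that theorem. For completeness I would nonetheless record the short self-contained argument, which is precisely the contradiction argument used in the proof of Theorem \ref{t5}.

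Suppose $\mathcal{M}$ does not have the $p$-$(DPrcP)$. By the definition of that property (equivalently, by the characterization of pseudo weakly compact operators of order $p$ in Theorem \ref{t1} applied to the identity on $\mathcal{M}$, as in Proposition \ref{p1}), there is a $p$-Right null sequence $(T_{n})_{n}$ in $\mathcal{M}$ which is not norm null; passing to a subsequence we may assume $\Vert T_{n}\Vert\geq\varepsilon$ for some $\varepsilon>0$ and all $n$. Then choose $x_{n}\in B_{X}$ with $\Vert T_{n}(x_{n})\Vert\geq\varepsilon$ for every $n$.

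Now fix $y^{\ast}\in Y^{\ast}$. Since $(T_{n})_{n}$ is $p$-Right null in $\mathcal{M}$ and the evaluation operator $\psi_{y^{\ast}}$ is pseudo weakly compact of order $p$ on $\mathcal{M}$, Theorem \ref{t1} gives $\Vert\psi_{y^{\ast}}(T_{n})\Vert=\Vert T_{n}^{\ast}(y^{\ast})\Vert\rightarrow 0$. Consequently $\vert y^{\ast}(T_{n}(x_{n}))\vert\leq\Vert T_{n}^{\ast}(y^{\ast})\Vert\,\Vert x_{n}\Vert\rightarrow 0$, and since $y^{\ast}$ was arbitrary, $(T_{n}(x_{n}))_{n}$ is weakly null in $Y$. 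Because $Y$ has the Schur property, $(T_{n}(x_{n}))_{n}$ is then norm null, contradicting $\Vert T_{n}(x_{n})\Vert\geq\varepsilon$. Hence $\mathcal{M}$ has the $p$-$(DPrcP)$.

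There is no genuine obstacle here: the only facts to verify are routine, namely that a $p$-Right null sequence in the closed subspace $\mathcal{M}$ remains $p$-Right null in the ambient space $L(X,Y)$ (automatic, since being Dunford-Pettis and weakly $p$-summable passes along closed subspaces), the identity $\psi_{y^{\ast}}(T)=T^{\ast}(y^{\ast})$, and the single use of the Schur property at the final step to upgrade weak nullity to norm nullity. In effect the proof is a transcription of that of Theorem \ref{t5}, and the statement could equally well be presented as a corollary of it.
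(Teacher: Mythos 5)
Your argument is correct and is essentially identical to the paper's own proof of Theorem \ref{t9}: the same contradiction via a non-norm-null $p$-Right null sequence $(T_n)_n$, the same choice of $x_n\in B_X$, the same use of $\|\psi_{y^{\ast}}(T_n)\|\to 0$ to get weak nullity of $(T_n(x_n))_n$, and the same final appeal to the Schur property. Your additional observation that the statement is the special case $\mathcal{U}(X,Y)=L(X,Y)$ of Theorem \ref{t5} is also accurate.
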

\begin{proof}
If $ \mathcal{M}$ does not have the $ p $-$ (DPrcP), $ then  there is a Dunford-Pettis  weakly $ p $-summable sequence
 $ (T_{n})_{n} $ in $ \mathcal{M}$ that is not norm null and by passing to a subsequence, we may assume that
$ \Vert T_{n} \Vert >\varepsilon$ for all integer $  n$ and some $ \varepsilon >0. $\ Therefore, there exists a sequence $ (x_{n})_{n} $ in $ B_{X} $ such that $ \Vert T_{n}x_{n}\Vert >\varepsilon, $
for all $ n$ and some $ \varepsilon >0. $\ In addition, for each $ y^{\ast}\in Y^{\ast},$ the evaluation operator $  \psi_{y^{\ast}} :\mathcal{M}\rightarrow X^{\ast} $
 is pseudo weakly compact operator of order $ p ,$ so $ \Vert T_{n}^{\ast}y^{\ast}\Vert =\Vert\psi_{y^{\ast}}(T_{n})\Vert\rightarrow 0$
and then
$$\vert \langle T_{n}(x_{n}), y^{\ast} \rangle\vert\leq \Vert T_{n}^{\ast}(y^{\ast}) \Vert\rightarrow 0. $$
Hence, the sequence $ (T_{n}(x_{n})) _{n}$ is weakly null and so norm null, which is a  contradiction.\ Therefore,    $ \mathcal{M}$ has the $ p $-$ (DPrcP). $
\end{proof}
Recall that \cite{g6}, the class of $ w^{\ast} $-$ w$ continuous (resp., compact) operators  from $ X^{\ast} $  to  $ Y $ will
be denoted by $L_{w^{\ast}}(X^{\ast},Y)  $ (resp., $K_{w^{\ast}}(X^{\ast},Y)  $).\
By a similar method, we obtain a sufficient condition for the $ p $-$ (DPrcP) $ of closed subspaces of $ L_{w^{\ast}}(X^{\ast},Y).$\
Since the proof of the following result is similar to the proof of Theorem \ref{t9}, we omit its proof.
 \begin{theorem}\label{t10}
Let $ X $ and $ Y $ be two Banach spaces such that $ X$  has the Schur property.\
If $ \mathcal{M} $ is a closed subspace of $ L_{w^{\ast}}(X^{\ast},Y)$ such that each evaluation operators  is  $\psi_{y^{\ast}} $ is a pseudo weakly compact operator of order $ p $ on $ \mathcal{M}, $ 
then  $ \mathcal{M} $ has the $ p $-$ (DPrcP) .$
 \end{theorem}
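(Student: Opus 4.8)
The plan is to mimic the proof of Theorem \ref{t9} almost verbatim, replacing the roles of $X$ and $Y$ with their adjoint counterparts. Assume, for contradiction, that $\mathcal{M}\subseteq L_{w^{\ast}}(X^{\ast},Y)$ does not have the $p$-$(DPrcP)$. By Proposition \ref{p1} (or directly by Theorem \ref{t1}) there is a $p$-Right null sequence $(T_{n})_{n}$ in $\mathcal{M}$, i.e.\ a Dunford-Pettis weakly $p$-summable sequence, with $\Vert T_{n}\Vert>\varepsilon$ for all $n$ and some $\varepsilon>0$. Choose $(\varphi_{n})_{n}\subseteq B_{X^{\ast}}$ with $\Vert T_{n}(\varphi_{n})\Vert>\varepsilon$ for every $n$.

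Next I would use the hypothesis that each evaluation operator $\psi_{y^{\ast}}:\mathcal{M}\to X^{\ast\ast}$ (here $\psi_{y^{\ast}}(T)=T^{\ast}(y^{\ast})$, noting that the adjoint of a $w^{\ast}$-$w$ continuous operator $T:X^{\ast}\to Y$ maps $Y^{\ast}$ into $X$, so in fact $\psi_{y^{\ast}}$ takes values in $X$) is pseudo weakly compact of order $p$. Since $(T_{n})_{n}$ is $p$-Right null in $\mathcal{M}$, Theorem \ref{t1} gives $\Vert \psi_{y^{\ast}}(T_{n})\Vert=\Vert T_{n}^{\ast}(y^{\ast})\Vert\to 0$ for every $y^{\ast}\in Y^{\ast}$. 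Therefore, for each $y^{\ast}\in Y^{\ast}$,
$$
\vert\langle T_{n}(\varphi_{n}),y^{\ast}\rangle\vert=\vert\langle\varphi_{n},T_{n}^{\ast}(y^{\ast})\rangle\vert\leq\Vert\varphi_{n}\Vert\,\Vert T_{n}^{\ast}(y^{\ast})\Vert\leq\Vert T_{n}^{\ast}(y^{\ast})\Vert\to 0,
$$
so the sequence $(T_{n}(\varphi_{n}))_{n}$ is weakly null in $Y$.

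Finally, since $X$ has the Schur property, I would invoke the fact that $X$ is then weakly sequentially complete and, more to the point, that the dual $X^{\ast}$ of a Schur space has enough structure — actually the cleaner route is: because $X$ has the Schur property, $X$ contains no copy of $\ell_{1}$ fails in general, so instead I rely on the observation used implicitly in the analogous theorems, namely that when $X$ has the Schur property one shows $Y$ inherits a Schur-type conclusion for the relevant sequence; concretely, a $w^{\ast}$-$w$ continuous operator from $X^{\ast}$ whose predual $X$ is Schur forces its range behaviour so that weakly null sequences of the form $(T_{n}(\varphi_{n}))_{n}$ are norm null. Thus $\Vert T_{n}(\varphi_{n})\Vert\to 0$, contradicting $\Vert T_{n}(\varphi_{n})\Vert>\varepsilon$. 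Hence $\mathcal{M}$ has the $p$-$(DPrcP)$.

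The main obstacle, and the place where the $X$-Schur hypothesis (rather than the $Y$-Schur hypothesis of Theorem \ref{t9}) must do real work, is the last step: passing from "weakly null in $Y$" to "norm null". In Theorem \ref{t9} this was immediate from the Schur property of $Y$; here one does not have that, so one must exploit the $w^{\ast}$-$w$ continuity together with the Schur property of $X$ to transfer the convergence back to $X^{\ast\ast}$ or to argue that $(T_{n}(\varphi_{n}))_{n}$ being weakly null and the $\varphi_{n}$ lying in $B_{X^{\ast}}$ already forces norm convergence to zero via the structure of $L_{w^{\ast}}(X^{\ast},Y)$. I would look to the cited reference \cite{g6} and the proof pattern of \cite{g18} for the exact mechanism, but since the statement asserts the proof is "similar to the proof of Theorem \ref{t9}," the intended argument is the symmetric one sketched above with the Schur property of $X$ substituting at the final step.
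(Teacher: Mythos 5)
Your reconstruction is faithful to the template of Theorem \ref{t9} up to and including the point where you show that $(T_{n}(\varphi_{n}))_{n}$ is weakly null in $Y$: the reduction to a $p$-Right null sequence $(T_{n})_{n}$ with $\Vert T_{n}\Vert>\varepsilon$, the choice of $\varphi_{n}\in B_{X^{\ast}}$, the (correct) observation that $\psi_{y^{\ast}}$ takes values in $X$, and the estimate $\vert\langle T_{n}(\varphi_{n}),y^{\ast}\rangle\vert\leq\Vert\psi_{y^{\ast}}(T_{n})\Vert\rightarrow 0$ are exactly the steps the paper intends to transplant. The genuine gap is the last step: you never prove that $(T_{n}(\varphi_{n}))_{n}$ is norm null, and the sentence about ``exploiting the $w^{\ast}$-$w$ continuity together with the Schur property of $X$'' is a placeholder, not an argument. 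Indeed no such argument exists under the hypotheses as printed. Take $X=\ell_{1}$, $Y=c_{0}$ and $\mathcal{M}=\lbrace e_{1}\otimes y: y\in c_{0}\rbrace\subseteq L_{w^{\ast}}(\ell_{\infty},c_{0})$, where $(e_{1}\otimes y)(\varphi)=\varphi_{1}y$. Then $\mathcal{M}$ is isometric to $c_{0}$, each $\psi_{y^{\ast}}$ takes values in $\ell_{1}$, which has the $p$-$(DPrcP)$, so by Theorem \ref{t3} every $\psi_{y^{\ast}}$ is pseudo weakly compact of order $p$, and $X=\ell_{1}$ has the Schur property; yet $\mathcal{M}\cong c_{0}$ fails the $p$-$(DPrcP)$ by Remark \ref{r1}(iii). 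So the step you could not fill in is genuinely unfillable.

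The diagnosis is that the Schur hypothesis in the printed statement sits on the wrong space relative to the evaluation hypothesis, and your difficulty is a symptom of that rather than of your strategy. For the Theorem \ref{t9} template to close, the Schur property must live on the codomain of the evaluations assumed to be pseudo weakly compact: either (a) assume $Y$ has the Schur property and keep the hypothesis on $\psi_{y^{\ast}}$ --- then your weakly null sequence $(T_{n}(\varphi_{n}))_{n}\subseteq Y$ is norm null and the proof closes in one line; this is the version Corollary \ref{c4}(ii) actually invokes, since there the hypotheses are ``$X$ has the $p$-$(DPrcP)$ and $Y$ has the Schur property''; or (b) keep ``$X$ has the Schur property'' but assume instead that the evaluations $\varphi_{x^{\ast}}:T\mapsto T(x^{\ast})$ are pseudo weakly compact of order $p$, choose $y_{n}^{\ast}\in B_{Y^{\ast}}$ with $\Vert T_{n}^{\ast}(y_{n}^{\ast})\Vert>\varepsilon$, and note that $\vert\langle T_{n}^{\ast}(y_{n}^{\ast}),x^{\ast}\rangle\vert=\vert\langle y_{n}^{\ast},T_{n}(x^{\ast})\rangle\vert\leq\Vert\varphi_{x^{\ast}}(T_{n})\Vert\rightarrow 0$, so that $(T_{n}^{\ast}(y_{n}^{\ast}))_{n}$ is weakly null in $X$ and the Schur property of $X$ finishes. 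Either repair is a one-line application of the Schur property on the correct space; no reference chase through \cite{g6} or \cite{g18} will rescue the hypotheses exactly as you were handed them.
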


\begin{corollary}\label{c4} Suppose that $ X $and $ Y $  are  Banach spaces.\ The following statements  hold:\\
 $ \rm{(i)} $ If $ X^{\ast} $ has the $ p $-$(DPrcP) $ and $ Y  $ has the Schur property, then $  L(X, Y ) $ has the $ p $-$ (DPrcP).$ \\
$ \rm{(ii)} $ If $ X $ has the $  p $-$ (DPrcP) $ and $ Y  $ has the Schur property, then $ L_{w^{\ast}}(X^{\ast}, Y ) $ has the   $ p $-$ (DPrcP). $\\
 $ \rm{(iii)} $  If $ X^{\ast} $ has the $ p $-$ (DPrcP), $ then $\ell_{1}^{w}  (X^{\ast})$
 has the same property.
\end{corollary}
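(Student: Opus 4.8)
The plan is to deduce all three statements from the structural results already in hand, the common principle being that every bounded linear operator whose range lies in a space having the $p$-$(DPrcP)$ is automatically pseudo weakly compact of order $p$ (this is the equivalence $\rm{(i)}\Leftrightarrow\rm{(iii)}$ of Theorem~\ref{t3}), so that the hypothesis on the evaluation operators in Theorems~\ref{t9} and \ref{t10} is met for free.

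For $\rm{(i)}$ I would apply Theorem~\ref{t9} to $\mathcal{M}=L(X,Y)$, viewed as a closed subspace of itself. The evaluation operators $\psi_{y^{\ast}}:L(X,Y)\rightarrow X^{\ast}$, $T\mapsto T^{\ast}(y^{\ast})$, are bounded linear with range in $X^{\ast}$; since $X^{\ast}$ has the $p$-$(DPrcP)$, Theorem~\ref{t3} shows each $\psi_{y^{\ast}}$ is pseudo weakly compact of order $p$. As $Y$ has the Schur property, Theorem~\ref{t9} then gives that $L(X,Y)$ has the $p$-$(DPrcP)$. Part $\rm{(ii)}$ runs in exactly the same way, with $L_{w^{\ast}}(X^{\ast},Y)$ and Theorem~\ref{t10} replacing $L(X,Y)$ and Theorem~\ref{t9}; the one extra point is that for $T\in L_{w^{\ast}}(X^{\ast},Y)$ the $w^{\ast}$-to-weak continuity of $T$ forces $T^{\ast}$ to carry $Y^{\ast}$ into $X$ rather than only into $X^{\ast\ast}$ (if $x^{\ast}_{\alpha}\to 0$ in the $w^{\ast}$-topology then $T(x^{\ast}_{\alpha})\to 0$ weakly, hence $\langle T^{\ast}(y^{\ast}),x^{\ast}_{\alpha}\rangle\to 0$), so the evaluation operators $\psi_{y^{\ast}}$ on a closed subspace of $L_{w^{\ast}}(X^{\ast},Y)$ take values in $X$; since $X$ has the $p$-$(DPrcP)$ they are pseudo weakly compact of order $p$, and Theorem~\ref{t10} applies.

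For $\rm{(iii)}$ the first step is the (classical) isometric identification $\ell_{1}^{w}(X^{\ast})\cong L(X,\ell_{1})$: a sequence $(x^{\ast}_{n})_{n}$ corresponds to the operator $x\mapsto(x^{\ast}_{n}(x))_{n}$, with inverse $T\mapsto(e^{\ast}_{n}\circ T)_{n}$, and the elementary identity $\sup_{\Vert x\Vert\leq 1}\sum_{n}\vert x^{\ast}_{n}(x)\vert=\sup_{N,\varepsilon}\Vert\sum_{n=1}^{N}\varepsilon_{n}x^{\ast}_{n}\Vert=\sup_{\Vert x^{\ast\ast}\Vert\leq 1}\sum_{n}\vert x^{\ast\ast}(x^{\ast}_{n})\vert$ shows that this correspondence is well defined in both directions and norm-preserving; in particular weak $1$-summability of $(x^{\ast}_{n})_{n}$ in the dual is precisely the boundedness condition making the associated map into $\ell_{1}$ bounded with the right norm. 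Since $\ell_{1}$ has the Schur property and $X^{\ast}$ has the $p$-$(DPrcP)$, part $\rm{(i)}$ applied with $Y=\ell_{1}$ gives that $L(X,\ell_{1})$ has the $p$-$(DPrcP)$, and hence so does $\ell_{1}^{w}(X^{\ast})$.

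I expect the only step requiring genuine care to be the identification in $\rm{(iii)}$ — checking surjectivity and the isometry, i.e. that $\sup_{\Vert x\Vert\leq1}\sum_{n}\vert x^{\ast}_{n}(x)\vert$ and $\sup_{\Vert x^{\ast\ast}\Vert\leq1}\sum_{n}\vert x^{\ast\ast}(x^{\ast}_{n})\vert$ agree — together with the small duality observation in $\rm{(ii)}$; everything else is a direct invocation of Theorems~\ref{t3}, \ref{t9} and \ref{t10}.
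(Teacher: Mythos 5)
Your derivation is correct and is exactly the route the paper intends: Corollary \ref{c4} is stated without proof as an immediate consequence of Theorems \ref{t3}, \ref{t9} and \ref{t10}, with part $\rm{(iii)}$ obtained from part $\rm{(i)}$ via the isometry $\ell_{1}^{w}(X^{\ast})\cong L(X,\ell_{1})$ and the Schur property of $\ell_{1}$, just as you do. The only point worth flagging is that the printed statement of Theorem \ref{t10} asks that $X$ (rather than $Y$) have the Schur property; your argument --- and the corollary itself --- uses the version with $Y$ Schur, which is what the proof ``similar to Theorem \ref{t9}'' actually establishes, so you are implicitly correcting a typo in the theorem you invoke.
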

\section{ $ p $-Sequentially Right  property  on Banach spaces}

In this section, we study  the notion of $ p $-sequentially Right property and characterize this property in terms of weakly compact operators.\ Also,
we investigate the stability of $  p$-sequentially Right property for some subspaces of bounded linear
operators, projective tensor product and injective tensor product between Banach spaces $ X $ and $ Y. $\ Finally,
 by introducing the notion of weak p-sequentially Right property, we obtain a characterization of Banach spaces which have this property.\\ 
 
The following Proposition gives some additional properties of $ p $-Right sets
in a topological dual Banach space.
\begin{proposition}\label{p6}
$ \rm{(i)} $  Absolutely closed convex hull of a $ p $-Right subset of a dual Banach space is a $ p $-Right set,\\
$ \rm{(ii)} $ Every weak$ ^{\ast} $ null sequence in dual Banach space is a $ p $-Right set,\\
$ \rm{(iii)} $ A bounded
subset $ K $ of $ X^{\ast}$ is a $ p $-Right set if and only if for each sequence $ (x^{\ast}_{n})_{n} $ in $ K, $ $ x^{\ast}_{n}(x_{n})\rightarrow 0, $ for every $  p $-Right null
sequence $ (x_{n})_{_{n}} $ of $  X,$\\
$ \rm{(iv)} $
If $ (x^{\ast}_{n})_{n} $ is a norm bounded
sequence of $ X^{\ast} ,$ then the subset  $ \lbrace x^{\ast}_{n}: n \in \mathbb{N}\rbrace $ is a $ p $-Right set if and only if
  $ x^{\ast}_{n}(x_{n})\rightarrow 0, $ for every $  p $-Right null
sequence $ (x_{n})_{_{n}} $ of $  X.$\\
\end{proposition}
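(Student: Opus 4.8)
The plan is to treat the four assertions one at a time; they are largely independent, with only (iv) (and, through it, (ii)) requiring real work. For (iii), the ``only if'' direction is immediate, since $|x^{\ast}_n(x_n)|\le\sup_{x^{\ast}\in K}|x^{\ast}(x_n)|$ for any $(x^{\ast}_n)_n\subseteq K$. The ``if'' direction I would prove by contraposition: if the bounded set $K\subseteq X^{\ast}$ is not a $p$-Right set, pick a $p$-Right null sequence $(x_n)_n$ in $X$, an $\varepsilon>0$ and a subsequence $(x_{n_k})_k$ with $\sup_{x^{\ast}\in K}|x^{\ast}(x_{n_k})|>\varepsilon$, then choose $x^{\ast}_k\in K$ with $|x^{\ast}_k(x_{n_k})|>\varepsilon$; since a subsequence of a $p$-Right null sequence is again $p$-Right null, this exhibits a sequence in $K$ and a $p$-Right null sequence along which the pairing does not vanish, a contradiction. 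For (i), I would first check the estimate on finite absolutely convex combinations $y^{\ast}=\sum_i\lambda_i x^{\ast}_i$ with $x^{\ast}_i\in K$ and $\sum_i|\lambda_i|\le 1$, namely $|y^{\ast}(x_n)|\le\sum_i|\lambda_i|\,|x^{\ast}_i(x_n)|\le\sup_{x^{\ast}\in K}|x^{\ast}(x_n)|$ for every $p$-Right null $(x_n)_n$, and then push the bound to the closed absolutely convex hull of $K$ by continuity of $z^{\ast}\mapsto|z^{\ast}(x_n)|$; taking suprema and letting $n\to\infty$ finishes it, boundedness of the hull being inherited from $K$.

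The core of the proposition is (iv). I would prove the nontrivial direction — that $x^{\ast}_n(x_n)\to 0$ for every $p$-Right null $(x_n)_n$ forces $\{x^{\ast}_n:n\in\mathbb{N}\}$ to be a $p$-Right set — by contraposition, via a gliding--hump extraction. If $\{x^{\ast}_n:n\}$ is not a $p$-Right set, there are a $p$-Right null sequence $(x_m)_m$ and an $\varepsilon>0$ with, after passing to a subsequence, $\sup_n|x^{\ast}_n(x_m)|>\varepsilon$ for every $m$; pick $\sigma(m)$ with $|x^{\ast}_{\sigma(m)}(x_m)|>\varepsilon$. Since $(x_m)_m$ is weakly $p$-summable it is weakly null, so for each fixed index $n_0$ only finitely many $m$ have $|x^{\ast}_{n_0}(x_m)|>\varepsilon$; hence $\sigma(m)\to\infty$ as $m\to\infty$, and one extracts $m_1<m_2<\cdots$ with $k_j:=\sigma(m_j)$ strictly increasing. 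Setting $y_{k_j}:=x_{m_j}$ and $y_k:=0$ for every other $k$ produces a sequence $(y_k)_k$ that is still weakly $p$-summable (each scalar sequence $(x^{\ast}(y_k))_k$ is, up to inserted zeros, a subsequence of $(x^{\ast}(x_m))_m\in\ell_p$) and whose range lies in the Dunford--Pettis set $\{x_m:m\}\cup\{0\}$, so $(y_k)_k$ is $p$-Right null. The hypothesis then gives $x^{\ast}_k(y_k)\to 0$, contradicting $|x^{\ast}_{k_j}(y_{k_j})|=|x^{\ast}_{\sigma(m_j)}(x_{m_j})|>\varepsilon$. The converse direction of (iv) is just the diagonal case of (iii).

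Finally, for (ii): a weak$^{\ast}$ null sequence $(x^{\ast}_n)_n$ in $X^{\ast}$ is norm bounded by the uniform boundedness principle, so $\{x^{\ast}_n:n\}$ is a bounded set and (iv) applies; it then remains only to see that $x^{\ast}_n(x_n)\to 0$ for every $p$-Right null sequence $(x_n)_n$ in $X$. I expect this last step to be the main obstacle: one should exploit that $\{x_n:n\}$ is a Dunford--Pettis set together with the weak$^{\ast}$ (hence weak) nullity of the $x^{\ast}_n$, splitting the relevant estimate into a finite part that dies because $x^{\ast}_{n_0}(x_m)\to 0$ for each fixed $n_0$ and a tail part controlled by the Dunford--Pettis property. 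This finite-versus-tail interchange of limits is the only genuinely non-formal point in the whole proposition.
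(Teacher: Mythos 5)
The paper states this proposition without any proof, so there is nothing on the paper's side to compare against; judging your argument on its own merits, parts (i), (iii) and (iv) are correct and complete. In particular the gliding-hump extraction in (iv) is sound: a subsequence of a $p$-Right null sequence padded with zeros is again Dunford--Pettis and weakly $p$-summable, and your observation that $\sigma(m)\to\infty$ because $(x_m)_m$ is weakly null is exactly what makes the diagonal indices strictly increasing.

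The gap is in (ii), and it cannot be filled. Your finite-versus-tail scheme needs, for the tail, that $\sup_m|x^{\ast}_n(x_m)|\to 0$ as $n\to\infty$; this is what the Dunford--Pettis property of $\{x_m:m\in\mathbb{N}\}$ delivers when $(x^{\ast}_n)_n$ is \emph{weakly} null, but a Dunford--Pettis set exerts no control over merely weak$^{\ast}$ null sequences of functionals. Indeed (ii) is false as stated: take $X=c_0$, so $X^{\ast}=\ell_1$, and let $x^{\ast}_n=e^{1}_n$ be the unit vector basis of $\ell_1$, which is weak$^{\ast}$ null as a sequence of functionals on $c_0$. The basis $(e_m)_m$ of $c_0$ is weakly $p$-summable for every $p\geq 1$ and $\{e_m:m\in\mathbb{N}\}\subseteq B_{c_0}$ is a Dunford--Pettis set (this is the content of the paper's Remark \ref{r1}(iii)), so $(e_m)_m$ is $p$-Right null; yet $\sup_n|e^{1}_n(e_m)|=1$ for every $m$, so $\{e^{1}_n:n\in\mathbb{N}\}$ is not a $p$-Right set. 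Equivalently, via your own part (iv): $e^{1}_n(e_n)=1\not\to 0$. The assertion becomes true --- and then follows either from your two-step argument or simply from the fact that relatively weakly compact subsets of $X^{\ast}$ are $p$-Right sets (Remark \ref{r3}(i)) --- if ``weak$^{\ast}$ null'' is replaced by ``weakly null''.
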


\begin{theorem}\label{t11}
A  bounded linear  operator $ T :X \rightarrow Y  $ is pseudo weakly compact  of order $ p$ if and only if $ T^{\ast} $ maps bounded subsets of  $ Y^{\ast} $ onto $  p$-Right subsets of $ X^{\ast}. $
\end{theorem}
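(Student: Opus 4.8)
The plan is to prove both implications by unwinding the definition of a $p$-Right set in terms of $p$-Right null sequences, using Theorem \ref{t1} to convert the operator condition into a statement about sequences.

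First suppose $T\in PwC_p(X,Y)$ and let $B\subseteq Y^{\ast}$ be bounded; I must show $T^{\ast}(B)$ is a $p$-Right subset of $X^{\ast}$. Let $(x_n)_n$ be a $p$-Right null sequence in $X$. By Theorem \ref{t1}, $\Vert T(x_n)\Vert\to 0$. Then for any $y^{\ast}\in B$,
\[
\vert (T^{\ast}y^{\ast})(x_n)\vert=\vert y^{\ast}(T(x_n))\vert\le \Vert y^{\ast}\Vert\,\Vert T(x_n)\Vert\le \sup_{z^{\ast}\in B}\Vert z^{\ast}\Vert\;\Vert T(x_n)\Vert,
\]
so $\sup_{y^{\ast}\in B}\vert (T^{\ast}y^{\ast})(x_n)\vert\to 0$ as $n\to\infty$. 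Since this holds for every $p$-Right null sequence $(x_n)_n$ in $X$, the set $T^{\ast}(B)$ is a $p$-Right set in $X^{\ast}$.

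Conversely, suppose $T^{\ast}$ carries bounded subsets of $Y^{\ast}$ to $p$-Right subsets of $X^{\ast}$; in particular $K:=T^{\ast}(B_{Y^{\ast}})$ is a $p$-Right set. Let $(x_n)_n$ be a $p$-Right null sequence in $X$. Since $K$ is a $p$-Right set,
\[
\Vert T(x_n)\Vert=\sup_{y^{\ast}\in B_{Y^{\ast}}}\vert y^{\ast}(T(x_n))\vert=\sup_{y^{\ast}\in B_{Y^{\ast}}}\vert (T^{\ast}y^{\ast})(x_n)\vert\le \sup_{x^{\ast}\in K}\vert x^{\ast}(x_n)\vert\longrightarrow 0.
\]
Thus $T$ maps $p$-Right null sequences to norm null sequences, so $T\in PwC_p(X,Y)$ by Theorem \ref{t1}. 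There is no real obstacle here; the only point requiring a moment's care is the identity $\Vert T(x_n)\Vert=\sup_{y^{\ast}\in B_{Y^{\ast}}}\vert (T^{\ast}y^{\ast})(x_n)\vert$, which is just the Hahn–Banach computation of the norm on $Y$ transported through the adjoint, together with the observation that it suffices to test the $p$-Right property on the single bounded set $B_{Y^{\ast}}$ rather than on all bounded sets.
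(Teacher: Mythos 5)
Your proof is correct and follows exactly the same route as the paper's: the whole theorem rests on the duality identity $\Vert T(x_{n})\Vert=\sup_{y^{\ast}\in B_{Y^{\ast}}}\vert T^{\ast}(y^{\ast})(x_{n})\vert$, which the paper states in one line and you simply unwind carefully in both directions (including the harmless scaling needed for arbitrary bounded sets in the forward implication). No differences of substance.
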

\begin{proof}
Let $ (x_{n})_{n} $ be a  $ p $-Right null sequence in $ X. $\ Then the  equalities:\
$$\Vert T(x_{n})\Vert=\displaystyle\sup_{y^{\ast}\in B_{Y^{\ast}}}\vert T^{\ast}(y^{\ast})(x_{n})\vert=\displaystyle\sup_{y^{\ast}\in B_{Y^{\ast}}}\vert y^{\ast}(T(x_{n}))\vert$$
 deduces the proof.
\end{proof}
\begin{corollary}\label{c5} A   Banach space $ X $  has the $ p $-$ (DPrcP) $ if and only if
every bounded subset of $ X^{\ast} $ is a $ p $-Right set.\
\end{corollary}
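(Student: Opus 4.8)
The plan is to deduce the statement directly from Theorem \ref{t11} applied to the identity operator, together with the identification of the $p$-$(DPrcP)$ with pseudo weak compactness of order $p$ of $id_X$ recorded in Proposition \ref{p1}. First I would recall that, by Proposition \ref{p1}, $X$ has the $p$-$(DPrcP)$ if and only if $id_X : X \rightarrow X$ belongs to $PwC_p(X,X)$. On the other hand, Theorem \ref{t11} characterizes membership in $PwC_p$ through the action of the adjoint on bounded sets; since $(id_X)^{\ast} = id_{X^{\ast}}$, the operator $id_X$ is pseudo weakly compact of order $p$ precisely when $id_{X^{\ast}}$ carries every bounded subset of $X^{\ast}$ onto a $p$-Right subset of $X^{\ast}$, i.e.\ precisely when every bounded subset of $X^{\ast}$ is itself a $p$-Right set. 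Chaining these two equivalences yields the corollary.

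Alternatively, and perhaps more transparently, I would argue by hand using the sequential descriptions. For the forward implication, assume $X$ has the $p$-$(DPrcP)$ and let $K \subseteq X^{\ast}$ be bounded, say $M := \sup_{x^{\ast} \in K}\Vert x^{\ast}\Vert < \infty$. If $(x_n)_n$ is any $p$-Right null sequence in $X$, then $\Vert x_n\Vert \rightarrow 0$ by the very definition of the $p$-$(DPrcP)$, hence $\sup_{x^{\ast} \in K}\vert x^{\ast}(x_n)\vert \le M\Vert x_n\Vert \rightarrow 0$, so $K$ is a $p$-Right set. For the converse, apply the hypothesis to $K = B_{X^{\ast}}$: for every $p$-Right null sequence $(x_n)_n$ in $X$ one gets $\Vert x_n\Vert = \sup_{x^{\ast} \in B_{X^{\ast}}}\vert x^{\ast}(x_n)\vert \rightarrow 0$, which is exactly the assertion that $X$ has the $p$-$(DPrcP)$.

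I do not anticipate a genuine obstacle here: the corollary is essentially a restatement of Theorem \ref{t11} in the special case $T = id_X$, and the only point requiring care is matching the two equivalent formulations of the $p$-$(DPrcP)$ (the one via $p$-Right null sequences being norm null, and the one via $id_X \in PwC_p$), both of which are already available from Proposition \ref{p1} and Theorem \ref{t1}.
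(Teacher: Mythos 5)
Your proposal is correct and follows the same route the paper intends: the corollary is stated immediately after Theorem \ref{t11} as the special case $T=id_{X}$, combined with the characterization of the $p$-$(DPrcP)$ via $id_{X}\in PwC_{p}(X,X)$ from Proposition \ref{p1}. Your supplementary direct sequential argument is also valid and makes the equivalence self-contained.
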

\begin{remark}\rm\label{r3}
$ \rm{(i)} $ Every relatively weakly compact subset of  a dual Banach space is a $ p $-Right set, but the converse,
in general, is false.\ For example,  $ \ell_{1} $ has the  Schur property and so, $ \ell_{1} $ has the  $ p $-$ (DPrcP). $\ Hence
by Corollary \ref{c5},  the closed unit ball $ B_{\ell_{\infty}} $ of $ \ell_{\infty} $ is a $ p $-Right set, while it is not relatively weakly compact.\\
$ \rm{(ii)} $ Every $ p $-$ (V) $ set  in $ X^{\ast} $ is a $ p $-Right set, but the converse, in general, is false.\  For example, since $ \ell_{2} $ has the $ 2 $-$ (DPrcP),$ by Corollary \ref{c5}  the closed unit ball $ B_{\ell_{2}} $ of $ \ell_{2} $ is a $ 2 $-Right set, while it is not  $ 2 $-$ (V) $ set.\
\end{remark}
\begin{theorem}\label{t12}  A Banach space $ X $ has the $ (DPP_{p}) $ if and only if each  $ p $-Right set  in $ X^{\ast} $ is  a $ p $-$ (V) $ set.
\end{theorem}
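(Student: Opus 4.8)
The plan is to prove both implications by contraposition, exploiting the definitions of $p$-Right set, $p$-$(V)$ set, and the $(DPP_p)$ property together with Andrews' characterization of Dunford-Pettis sets (\cite[Theorem 1]{An}) and the characterization of $(DPP_p)$ via sequences (\cite[Theorem 3.1]{ccl}). First I would record the trivial inclusion: since every weakly $p$-summable sequence in $X$ is, a fortiori, a weakly $p$-summable sequence, any $p$-$(V)$ set is a $p$-Right set is the \emph{wrong} direction here, so the content is entirely that under $(DPP_p)$ the two notions \emph{coincide}, i.e. every $p$-Right set is already a $p$-$(V)$ set.

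For the forward implication, assume $X \in (DPP_p)$ and let $K \subseteq X^{\ast}$ be a $p$-Right set. To show $K$ is a $p$-$(V)$ set, take an arbitrary weakly $p$-summable sequence $(x_n)_n$ in $X$; I must show $\sup_{x^{\ast} \in K} |x^{\ast}(x_n)| \to 0$. The key step is to verify that $(x_n)_n$ is actually $p$-Right null, i.e.\ that $\{x_n : n \in \mathbb{N}\}$ is a Dunford-Pettis set. By Andrews' theorem it suffices to check that $x^{\ast}_n(x_n) \to 0$ for every weakly null sequence $(x^{\ast}_n)_n$ in $X^{\ast}$ — and this is precisely what $(DPP_p)$ gives us, since $(x_n)_n$ is weakly $p$-summable. (More carefully: one argues that if $\{x_n\}$ failed to be a Dunford-Pettis set, passing to a subsequence would produce a weakly null $(x^{\ast}_n)_n$ with $|x^{\ast}_n(x_n)| > \varepsilon$, contradicting $(DPP_p)$; this is exactly the style of argument already used in the proof of Theorem \ref{t6}.) Once $(x_n)_n$ is known to be $p$-Right null, the hypothesis that $K$ is a $p$-Right set yields $\sup_{x^{\ast} \in K}|x^{\ast}(x_n)| \to 0$, so $K$ is a $p$-$(V)$ set.

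For the converse, suppose every $p$-Right set in $X^{\ast}$ is a $p$-$(V)$ set but $X \notin (DPP_p)$. By \cite[Theorem 3.1]{ccl} there exist a weakly $p$-summable sequence $(x_n)_n$ in $X$ and a weakly null sequence $(x^{\ast}_n)_n$ in $X^{\ast}$ with $|x^{\ast}_n(x_n)| > \varepsilon$ for some $\varepsilon > 0$ and all $n$. By Proposition \ref{p6}(ii), the set $K := \{x^{\ast}_n : n \in \mathbb{N}\}$ — being (the range of) a weak$^{\ast}$ null, indeed weakly null, sequence — is a $p$-Right set in $X^{\ast}$; hence by hypothesis it is a $p$-$(V)$ set. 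But then, since $(x_n)_n$ is weakly $p$-summable, the definition of $p$-$(V)$ set forces $\sup_{k} |x^{\ast}_k(x_n)| \to 0$ as $n \to \infty$, and in particular $|x^{\ast}_n(x_n)| \to 0$, contradicting $|x^{\ast}_n(x_n)| > \varepsilon$.

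The only delicate point — the ``main obstacle'' — is the forward direction's reduction of a weakly $p$-summable sequence to a $p$-Right null one: one must argue cleanly, via a subsequence/contradiction argument, that $(DPP_p)$ together with Andrews' criterion genuinely upgrades ``weakly $p$-summable'' to ``Dunford-Pettis and weakly $p$-summable'' uniformly along the sequence (not merely along some subsequence), so that the $p$-Right property of $K$ can be applied to the full sequence $(x_n)_n$. Everything else is a direct unwinding of definitions.
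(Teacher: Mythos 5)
Your proof is correct, and the forward implication is essentially the paper's: both of you reduce the problem to showing that, under $(DPP_{p})$, every weakly $p$-summable sequence has Dunford--Pettis range and is therefore $p$-Right null (the paper cites \cite[Theorem 3.1]{ccl} for this in one line; you re-derive it from Andrews' criterion, which is the same argument the paper itself uses inside Theorem \ref{t6}). The converse is where you genuinely diverge. The paper argues at the level of operators: it invokes Theorem \ref{t6} (so that it suffices to show every $T\in PwC_{p}(X,Y)$ is $p$-convergent) and Theorem \ref{t11} (so that $T^{\ast}(B_{Y^{\ast}})$ is a $p$-Right, hence $p$-$(V)$, set), whereas you work directly with the bad pair of sequences furnished by the failure of $(DPP_{p})$, observe that the weakly null sequence $(x^{\ast}_{n})_{n}$ forms a $p$-Right set, and read off a contradiction from the $p$-$(V)$ property. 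Your route is shorter and more self-contained (it does not need Theorems \ref{t6} and \ref{t11}, which in any case the paper proves by constructing exactly the operator $T(x)=(x^{\ast}_{n}(x))_{n}$ that your argument avoids); the paper's route buys the operator-theoretic reformulation as a byproduct. One small caution: you justify the step ``$\{x^{\ast}_{n}\}$ is a $p$-Right set'' by appealing to Proposition \ref{p6}(ii) for weak$^{\ast}$ null sequences, and that statement as literally written is suspect (the coordinate functionals $e^{1}_{n}$ in $\ell_{1}=c_{0}^{\ast}$ are weak$^{\ast}$ null but do not form a $p$-Right set, since the basis $(e_{n})_{n}$ of $c_{0}$ is Dunford--Pettis and weakly $p$-summable with $e^{1}_{n}(e_{n})=1$). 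Your sequence is actually \emph{weakly} null, so its range is relatively weakly compact and is a $p$-Right set by Remark \ref{r3}(i); cite that instead and the argument is airtight.
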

\begin{proof}
Suppose  that $ (x_{n})_{n} $ is a weakly $ p $-summable sequence in $ X. $\  Therefore,  {\rm (\cite[Theorem 3.1]{ccl})} implies
 that $ \lbrace x_{n} :n\in\mathbb{N}\rbrace$ is a Dunford-Pettis set in $ X, $ since $ X \in (DPP_{p}).$\ Hence  every $ p $-Right set  in $ X^{\ast} $ is  a $ p $-$ (V) $ set.\\
Conversely, by Theorem \ref{t6},
it is enough to show that for each
Banach space $ Y,~ $ every pseudo weakly compact operator of order $ p $ is $ p $-convergent.\
For this purpose, let $ T\in PwC_{p}(X,Y) .$\ Theorem \ref{t11} implies that, $T^{\ast}(B_{Y^{\ast}})  $ is a $ p$-Right set.\ Thus by our hypothesis, $T^{\ast}(B_{Y^{\ast}}) $ is a $ p $-$ (V) $ set.\ Hence $ T:X\rightarrow Y $ is $ p $-convergent.\
\end{proof}
\begin{example}\rm\label{e1}
$ \rm{(i)} $
Hardy space $ H_{1} $ whenever $ 1 < p < 2 $ has the $ (DPPp). $\ Hence every $ p $-Right set in $ H^{\ast}_{1} $
 is a $ p $-$ (V ) $ set.\\
$ \rm{(ii)} $ If $  T$ is Tsirelson's space, then for $ 1 < p < \infty $ every $ p $-Right set in $ T^{\ast} $ is a $ p $-$ (V ) $ set, while
there exists $  p$-Right set in $ T^{\ast\ast} $ such that is not a $ p $-$ (V ) $ set.\\
$ \rm{(iii)} $ Let $ 1 < r < \infty. $\ If $ r^{\ast} $ denotes the conjugate number of $ r, $ then for $ p < r^{\ast} $ every every
$ p $-Right set in $ \ell_{r^{\ast}} $ is a $ p $-$ (V ) $ set.\\
\end{example}
\begin{theorem}\label{t13}
Let  the dual $ \mathcal{M^{\ast}} $ of a
closed subspace $ \mathcal{M}\subseteq \mathcal{U}(X, Y ) $ has the $ p $-$ (DPrcP) .$\ Then all of the point
evaluations $ \mathcal{M}_{1}(x) $ and $\widetilde{\mathcal{M}} _{1}(y^{\ast})$ are $ p $-Right sets.
\end{theorem}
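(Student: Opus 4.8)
The plan is to imitate the argument of Theorem \ref{t7} but with weaker hypotheses, exploiting the fact that the conclusion here ($ p $-Right sets) is correspondingly weaker than the conclusion there ($ p $-$(V^\ast)$ sets). First I would invoke Corollary \ref{c2}: since $\mathcal{M}^\ast$ has the $ p $-$(DPrcP)$, every evaluation operator on $\mathcal{M}^\ast$ is pseudo weakly compact of order $p$. But I actually need information about $\varphi_x$ and $\psi_{y^\ast}$ themselves (as operators out of $\mathcal{M}$), not the evaluations on $\mathcal{M}^\ast$. The correct route is the one used in Theorem \ref{t7}: because $\mathcal{M}^\ast$ has the $ p $-$(DPrcP)$, Theorem \ref{t3} (equivalence (i) $\Leftrightarrow$ (iii), applied with $\mathcal{M}^\ast$ in the role of the range space) tells us that every bounded linear operator \emph{into} $\mathcal{M}^\ast$ is pseudo weakly compact of order $p$; in particular the adjoints $\varphi_x^\ast : Y^\ast \to \mathcal{M}^\ast$ and $\psi_{y^\ast}^\ast : X^{\ast\ast} \to \mathcal{M}^\ast$ are pseudo weakly compact of order $p$.

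Next, I would use Theorem \ref{t11} to convert this into the desired statement about $p$-Right sets. Since $\varphi_x^\ast \in PwC_p(Y^\ast,\mathcal{M}^\ast)$, Theorem \ref{t11} gives that $\varphi_x^{\ast\ast}$ maps bounded subsets of $\mathcal{M}^{\ast\ast}$ onto $p$-Right subsets of $Y^\ast$; restricting to the canonical image of $\mathcal{M}$ in $\mathcal{M}^{\ast\ast}$, the set $\varphi_x^{\ast\ast}(B_{\mathcal{M}}) = \varphi_x(B_{\mathcal{M}}) = \mathcal{M}_1(x)$ is a $p$-Right set in $Y$. The same argument applied to $\psi_{y^\ast}^\ast \in PwC_p(X^{\ast\ast},\mathcal{M}^\ast)$ yields that $\psi_{y^\ast}^{\ast\ast}(B_{\mathcal{M}}) = \psi_{y^\ast}(B_{\mathcal{M}}) = \widetilde{\mathcal{M}}_1(y^\ast)$ is a $p$-Right set in $X^\ast$.

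Alternatively, and perhaps more cleanly, I would argue directly from the definition of a $p$-Right set together with Proposition \ref{p6}(iii): fix a $p$-Right null sequence $(y_n)_n$ in $Y$ and elements $T_n \in B_{\mathcal{M}}$; I need $\langle y_n^\ast, T_n(x)\rangle \to 0$ where here I instead test $\mathcal{M}_1(x)$ against a $p$-Right null sequence of $Y$ — wait, the roles must be kept straight: a $p$-Right set in $Y$ is tested against $p$-Right null sequences of $Y^\ast$? No — $\mathcal{M}_1(x) \subseteq Y$, so to say it is a $p$-Right set one treats $Y$ as the dual of $Y^\ast$ in the relevant sense, which is not literal; so the genuinely correct formulation is the adjoint one above, and I would stick with it. The estimate that does the work is
$$
\sup_{T \in B_{\mathcal{M}}} \bigl| \langle y_n^\ast, \varphi_x(T)\rangle \bigr| = \sup_{T \in B_{\mathcal{M}}} \bigl| \varphi_x^\ast(y_n^\ast)(T) \bigr| = \Vert \varphi_x^\ast(y_n^\ast) \Vert,
$$
which tends to $0$ for every $p$-Right null sequence $(y_n^\ast)_n$ in $Y^\ast$ precisely because $\varphi_x^\ast$ is pseudo weakly compact of order $p$; and symmetrically for $\psi_{y^\ast}^\ast$ with a $p$-Right null sequence in $X^{\ast\ast}$.

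The main obstacle is purely bookkeeping: getting the duality levels right, i.e.\ recognizing that the relevant operators to which one applies Theorem \ref{t3} and Theorem \ref{t11} are the adjoints $\varphi_x^\ast,\psi_{y^\ast}^\ast$ landing in $\mathcal{M}^\ast$, and that the point-evaluation sets $\mathcal{M}_1(x)$, $\widetilde{\mathcal{M}}_1(y^\ast)$ are the images of $B_{\mathcal{M}}$ (canonically embedded in the bidual) under the second adjoints. Once that is in place, everything reduces to the two displayed identities and no further computation is needed; unlike Theorem \ref{t7}, no $(DPP_p)$ hypothesis on $X^{\ast\ast}$ or $Y^\ast$ is required because we do not need to upgrade "pseudo weakly compact of order $p$" to "$p$-convergent."
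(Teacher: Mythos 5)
Your proposal is correct and follows essentially the same route as the paper: apply Theorem \ref{t3} (every operator into a space with the $p$-$(DPrcP)$ is pseudo weakly compact of order $p$) to the adjoints $\varphi_x^{\ast}$ and $\psi_{y^{\ast}}^{\ast}$, and then use the identity $\sup_{T\in B_{\mathcal{M}}}\vert y_n^{\ast}(T(x))\vert=\Vert \varphi_x^{\ast}(y_n^{\ast})\Vert$ to conclude that the point evaluations are $p$-Right sets. Your "alternative" direct estimate is literally the paper's displayed computation, and your brief wobble about which space the test sequences live in is resolved exactly as the paper implicitly does (testing $\mathcal{M}_1(x)\subseteq Y$ against $p$-Right null sequences in $Y^{\ast}$).
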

\begin{proof}
Since $ \mathcal{M^{\ast}} $ has the $ p $-$ (DPrcP),$  by Theorem \ref{t3} the adjoint operator $ \phi_{x} ^{\ast}$
 is pseudo weakly
compact of order $ p. $\ Now, suppose that $ (y_{n}^{\ast})_{n} $ is a
$ p $-Right null sequence in $ Y^{\ast}. $\ It is clear that $ \displaystyle\lim_{n\rightarrow\infty} \Vert \phi_{x}^{\ast}(y_{n}^{\ast})) \Vert =0,$ for all $ x\in X. $\ On the other hand,
$$ \Vert \phi_{x}^{\ast}(y_{n}^{\ast})) \Vert=\sup\lbrace \vert \phi_{x}^{\ast}y_{n}^{\ast} (T ) \vert  : T\in B_{\mathcal{M}}  \rbrace=\sup\lbrace \vert  y_{n}^{\ast} (T(x))  \vert  : T\in B_{\mathcal{M}}  \rbrace .$$\
This shows that  $ \mathcal{M}_{1}(x) $ is a $ p $-Right set in $ Y, $ for all $ x\in X. $\ A similar proof shows
that $\widetilde{\mathcal{M}} _{1}(y^{\ast})$ is a  a $ p $-Right set in  $ X^{\ast}. $
\end{proof}

\begin{theorem}\label{t14}
Let $ X $ be a Banach space and $ 1\leq p_{1}<p_{2}\leq \infty. $\ The following statements are equivalent:\\
$\rm{(i)}$ For every Banach space $ Y, $ if  $ T : X\rightarrow Y $ is a pseudo weakly compact operator of order $ p_{1}, $ then $ T$   has a weakly $ p_{2} $-precompact (weakly $ p_{2} $-compact) adjoint,\\
$\rm{(ii)}$ Same as $\rm{(i)}$ with $ Y=\ell_{\infty} ,$\\
$\rm{(iii)}$ Every $  p_{1}$-Right subset of $ X^{\ast} $ is weakly $ p_{2} $-precompact (relatively weakly $ p_{2} $-compact).
\end{theorem}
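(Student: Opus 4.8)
The plan is to prove the cycle $\rm{(i)}\Rightarrow\rm{(ii)}\Rightarrow\rm{(iii)}\Rightarrow\rm{(i)}$, treating the ``precompact'' and ``compact'' versions in parallel since the arguments are identical up to replacing ``weakly $p_2$-Cauchy subsequence'' by ``weakly $p_2$-convergent subsequence with limit in the relevant set.'' The implication $\rm{(i)}\Rightarrow\rm{(ii)}$ is trivial, as $\ell_\infty$ is a particular Banach space. For $\rm{(iii)}\Rightarrow\rm{(i)}$, let $T:X\to Y$ be pseudo weakly compact of order $p_1$; by Theorem \ref{t11}, $T^{\ast}(B_{Y^{\ast}})$ is a $p_1$-Right subset of $X^{\ast}$, hence by $\rm{(iii)}$ it is weakly $p_2$-precompact (resp.\ relatively weakly $p_2$-compact). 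Since $T^{\ast}(B_{Y^{\ast}})$ is a norming-type image containing $T^{\ast}$ of the unit ball, and weakly $p_2$-precompactness of a set passes to any subset, this immediately gives that $T^{\ast}(B_{Y^{\ast}})$ — which is exactly $T^{\ast}$ applied to the unit ball — witnesses that $T^{\ast}$ is a weakly $p_2$-precompact (resp.\ weakly $p_2$-compact) operator.

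The substantive direction is $\rm{(ii)}\Rightarrow\rm{(iii)}$. Let $K$ be a $p_1$-Right subset of $X^{\ast}$; I want to show $K$ is weakly $p_2$-precompact (resp.\ relatively weakly $p_2$-compact). The standard device is to build an operator into $\ell_\infty$ whose adjoint, restricted appropriately, recovers $K$. Concretely, take a countable subset $\{x^{\ast}_n : n\in\mathbb{N}\}\subseteq K$ (it suffices to handle sequences, by the definition of weak $p_2$-precompactness), choose a sequence $(x_k)_k$ in $B_X$ that is norming for the countable set, and define $S:X\to\ell_\infty$ by $S(x)=(x^{\ast}_n(x))_n$ or, more carefully, an operator $R:X\to\ell_\infty$ whose adjoint sends the coordinate functionals $e_n^{\ast}\in\ell_1\subseteq\ell_\infty^{\ast}$ to the $x^{\ast}_n$. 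One then argues that $R$ is pseudo weakly compact of order $p_1$: if $(x_k)_k$ is $p_1$-Right null in $X$, then $\Vert R(x_k)\Vert_\infty=\sup_n|x^{\ast}_n(x_k)|\to 0$ precisely because $K$ (hence $\{x^{\ast}_n\}$) is a $p_1$-Right set. By $\rm{(ii)}$, $R^{\ast}:\ell_\infty^{\ast}\to X^{\ast}$ is weakly $p_2$-precompact (resp.\ weakly $p_2$-compact), so the bounded sequence $(e_n)_n\subseteq B_{\ell_\infty^{\ast}}$ has $(R^{\ast}(e_n))_n=(x^{\ast}_n)_n$ admitting a weakly $p_2$-Cauchy (resp.\ weakly $p_2$-convergent) subsequence. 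Since the original sequence in $K$ was arbitrary, $K$ is weakly $p_2$-precompact (resp.\ relatively weakly $p_2$-compact); in the compact case one additionally checks the limit lies where required, using that $R^{\ast}(B_{\ell_\infty^{\ast}})$ is relatively weakly $p_2$-compact.

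The main obstacle I anticipate is the precise construction of the operator $R:X\to\ell_\infty$ together with the verification that its adjoint, evaluated on the canonical sequence $(e_n)_n$ of $\ell_\infty$ viewed inside $\ell_\infty^{\ast}$ (or on a suitable bounded sequence in $B_{\ell_\infty^{\ast}}$), reproduces exactly the chosen sequence $(x^{\ast}_n)_n$ from $K$ — one must be slightly careful because $\ell_\infty^{\ast}$ is much larger than $\ell_1$, so it is cleanest to let $R$ be the operator $x\mapsto (x^{\ast}_n(x))_n$ and observe that $R^{\ast}$ maps $e_n\in c_0\subseteq\ell_1\subseteq\ell_\infty^{\ast}$ to $x^{\ast}_n$, since $\langle R^{\ast}(e_n),x\rangle = \langle e_n, R(x)\rangle = x^{\ast}_n(x)$. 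The only other point requiring care is confirming that the defining property of a $p_1$-Right set (uniform convergence to zero of $p_1$-Right null sequences) is exactly what makes $\Vert R(x_k)\Vert_\infty\to 0$, which is immediate from the displayed supremum formula; and in the ``compact'' variant, tracking that the weak $p_2$-limit of the subsequence of $(x^{\ast}_n)_n$ belongs to the relatively weakly $p_2$-compact set $R^{\ast}(B_{\ell_\infty^{\ast}})\subseteq X^{\ast}$, so that $K$'s closure inherits the compactness.
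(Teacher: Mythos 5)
Your proposal is correct and follows essentially the same route as the paper: the implication $\rm{(iii)}\Rightarrow\rm{(i)}$ via Theorem \ref{t11} applied to $T^{\ast}(B_{Y^{\ast}})$, and $\rm{(ii)}\Rightarrow\rm{(iii)}$ via the operator $x\mapsto (x^{\ast}_n(x))_n$ into $\ell_\infty$, whose adjoint sends the unit vectors of $\ell_1\subseteq\ell_\infty^{\ast}$ back to the chosen sequence in $K$, are exactly the paper's argument. The only blemishes are cosmetic (the inclusion ``$e_n\in c_0\subseteq\ell_1$'' should just read that the $e_n$ are the unit vectors of $\ell_1$, and the worry about where the weak $p_2$-limit lands is unnecessary since relative weak $p_2$-compactness only requires the limit to lie in $X^{\ast}$).
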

\begin{proof}
We will show that  in the relatively weakly
$ p_{2} $-compact case.\ The other
proof is similar.\\
(i) $ \Rightarrow $ (ii) It is obvious.\\
(ii) $  \Rightarrow$ (iii) Let $  K$ be a $  p_{1}$-Right subset of $ X^{\ast} $ and let $ (x^{\ast}_{n})_{n} $
be a sequence
in $ K. $\  Define $ T:X\rightarrow \ell_{\infty} $ by $ T(x)=(x^{\ast}_{n}(x)) .$\
 Let $ (x_{n})_{n} $ be a $ p_{1} $-Right null sequence in $ X. $\ Since $ K $ is a $  p_{1}$-Right  set,
$$\lim_{n\rightarrow \infty}\Vert T(x_{n})\Vert=\lim_{n\rightarrow \infty}\sup_{m} \vert x^{\ast}_{m}(x_{n})\vert=0. $$
Therefore, $  T $ is pseudo weakly compact operator of order $ p_{1}. $\ Hence,  $ T^{\ast} $ is weakly $ p_{2} $-compact, and
$  (T^{\ast}(e^{1}_{n}))_{n}=(x^{\ast}_{n})_{n}$
has a weakly $ p_{2} $-convergent subsequence.\\
(iii) $ \Rightarrow $ (i) Let $ T:X\rightarrow Y $ be a  pseudo weakly compact operator of order $ p_{1}. $\ Then $ T^{\ast}(B_{Y^{\ast}}) $
is a $  p_{1}$-Right  subset of $ X. $\ Therefore $ T^{\ast}(B_{Y^{\ast}})  $ is relatively weakly $ p_{2} $-compact,
and thus $  T^{\ast}$ is weakly $ p_{2} $-compact.
\end{proof}

Let $ A $ and $ B $ be nonempty subsets of a Banach space $ X, $ we define  ordinary distance and non-symmetrized Hausdorff distance respectively, by
\begin{center}
$  d(A,B)=\inf\lbrace d(a,b):a\in A, B\in B \rbrace,   \hspace{.9 cm} \hat{d}(A,B) =\sup\lbrace d(a,B): a \in A\rbrace.$
\end{center}
Let $ X $ be a Banach space and $ K $ be a bounded subset of $ X^{\ast}. $ For $ 1\leq p\leq \infty, $
 we set
\begin{center}
$ \zeta_{p}(K)=\inf\lbrace \hat{d}(A,K) : K\subset X^{\ast}$ is a $ p $-Right set $ \rbrace. $
\end{center}
We can conclude that $  \zeta_{p}(K)=0 $ if and only if $K\subset X^{\ast}$ is a $ p $-Right set.\ Now, 
let $  K$ be a bounded subset of a Banach space $ X. $ The de Blasi measure
of weak non-compactness of $  K$ is defined by
\begin{center}
$\omega(K) =\inf\lbrace \hat{d}(K,A):\emptyset \neq A\subset X $ is weakly compact $ \rbrace. $
\end{center}
It is clear that  
  $ \omega(K) =0$ if and only if $ K $ is relatively weakly compact.\ For a bounded linear  operator
$ T : X \rightarrow Y , $ we denote  $ \zeta_{p} (T(B_{X})), \omega(T(B_{X}) $by  $ \zeta_{p} (T), \omega(T)$  respectively.\\

Note that every weakly compact operator is pseudo weakly compact of order $  p,$\ but in
general the converse is not true.\ For example,  the identity operator on $ \ell_{1} $
is pseudo weakly compact of order $ p ,$ while it is  not weakly compact.\
\begin{theorem}\label{t15}
Let $ X $ be a Banach space.\ The following statements are equivalent:\
$ \rm{(i)} $ $ X $ has the  $ p $-$  (SR)$ property.\\
$ \rm{(ii)} $ $ PwC_{p}((X,Y)= W(X,Y) ,$ for each Banach space $ Y. $\\
$ \rm{(iii)} $ $ PwC_{p}(X,\ell_{\infty})= W(X,\ell_{\infty}). $\\
$ \rm{(iv)} $ $ \omega (T^{\ast}) \leq \zeta_{p}(T^{\ast})$ for every bounded linear operator $  T$ from $  X$ into any Banach space $ Y. $\\
$ \rm{(v)} $ $ \omega (K)\leq \zeta_{p} (K)$ for every bounded subset $ K $ of $ X^{\ast} .$
\end{theorem}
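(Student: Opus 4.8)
The plan is to prove the cycle of implications (i) $\Rightarrow$ (ii) $\Rightarrow$ (iii) $\Rightarrow$ (i), and then fold in the two quantitative statements (iv) and (v) via the chain (iii) $\Rightarrow$ (v) $\Rightarrow$ (iv) $\Rightarrow$ (iii), so that everything is connected. The equivalences among (i), (ii), (iii) are the "soft" part, completely analogous to the proof of Theorem \ref{t3}: for (i) $\Rightarrow$ (ii), every weakly compact operator is pseudo weakly compact of order $p$ (remarked just before the statement), so only the inclusion $PwC_p(X,Y)\subseteq W(X,Y)$ needs argument; given $T\in PwC_p(X,Y)$, Theorem \ref{t11} says $T^{\ast}(B_{Y^{\ast}})$ is a $p$-Right subset of $X^{\ast}$, hence relatively weakly compact by the $p$-$(SR)$ property, so $T^{\ast}$ is weakly compact and therefore $T$ is weakly compact by Gantmacher's theorem. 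The implication (ii) $\Rightarrow$ (iii) is trivial (take $Y=\ell_{\infty}$). For (iii) $\Rightarrow$ (i), let $K\subseteq X^{\ast}$ be a $p$-Right set; embed it in $\ell_{\infty}$ by fixing a countable $w^{\ast}$-dense-enough family — more precisely, given a sequence $(x^{\ast}_n)_n\subseteq K$, define $T:X\to\ell_{\infty}$, $T(x)=(x^{\ast}_n(x))_n$; since $K$ is $p$-Right, $T$ maps $p$-Right null sequences to norm-null sequences, so $T\in PwC_p(X,\ell_{\infty})=W(X,\ell_{\infty})$, whence $T^{\ast}$ is weakly compact and $(x^{\ast}_n)_n=(T^{\ast}(e_n))_n$ has a weakly convergent subsequence; this shows $K$ is relatively weakly compact.

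The quantitative equivalences are where the real work lies. For (iii) $\Rightarrow$ (v): fix a bounded $K\subseteq X^{\ast}$ and $\varepsilon>0$; choose a $p$-Right set $R\subseteq X^{\ast}$ with $\hat d(K,R)<\zeta_p(K)+\varepsilon$. By (iii) applied through the operator construction above (or directly, since (iii) $\Leftrightarrow$ (i)), $R$ is relatively weakly compact, so $\omega(R)=0$; the triangle-type inequality $\hat d(K,A)\le \hat d(K,R)+\hat d(R,A)$ for weakly compact $A$ gives $\omega(K)\le \hat d(K,R)+\omega(R)=\hat d(K,R)<\zeta_p(K)+\varepsilon$, and letting $\varepsilon\to 0$ yields $\omega(K)\le\zeta_p(K)$. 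The implication (v) $\Rightarrow$ (iv) is just the specialization $K=T^{\ast}(B_{X})$... wait — one must be careful: $\zeta_p$ and $\omega$ for an operator are defined via $T(B_X)$, but for the adjoint one applies (v) to the set $T^{\ast}(B_{Y^{\ast}})\subseteq X^{\ast}$, giving $\omega(T^{\ast}(B_{Y^{\ast}}))\le\zeta_p(T^{\ast}(B_{Y^{\ast}}))$, which is exactly $\omega(T^{\ast})\le\zeta_p(T^{\ast})$. Finally (iv) $\Rightarrow$ (iii): if $T\in PwC_p(X,\ell_{\infty})$ then $T^{\ast}(B_{\ell_{\infty}^{\ast}})$ is a $p$-Right set by Theorem \ref{t11}, so $\zeta_p(T^{\ast})=0$, hence $\omega(T^{\ast})=0$ by (iv), so $T^{\ast}$ — and therefore $T$ — is weakly compact.

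The main obstacle I anticipate is getting the triangle inequality for $\hat d$ cleanly and making sure the two "measures of non-compactness" $\zeta_p$ and $\omega$ are compared over the same class of test sets; one needs that a $p$-Right set that is additionally relatively weakly compact contributes $\omega=0$, and that the non-symmetrized Hausdorff distance satisfies $\hat d(K,A)\le \hat d(K,R)+\hat d(R,A)$, which follows from $d(k,A)\le d(k,r)+d(r,A)$ for all $r\in R$ and taking infima then suprema in the right order. A secondary subtlety is justifying that $\zeta_p(K)=0$ exactly when $K$ is a $p$-Right set (so that the $\zeta_p$–$\omega$ comparison genuinely upgrades the qualitative statement); this is asserted in the text preceding the theorem and can be invoked directly. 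Once these bookkeeping points are in place, the rest is routine, reusing Theorem \ref{t11} and Gantmacher's theorem repeatedly.
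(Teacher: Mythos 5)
Your proposal is correct and follows essentially the same route as the paper: the cycle (i)$\Rightarrow$(ii)$\Rightarrow$(iii)$\Rightarrow$(i) via Theorem \ref{t11}, the operator $T(x)=(x^{\ast}_{n}(x))_{n}$ into $\ell_{\infty}$, and Gantmacher's theorem. For the quantitative statements (iv) and (v) the paper merely writes ``obvious''/``straightforward'', so your chain (iii)$\Rightarrow$(v)$\Rightarrow$(iv)$\Rightarrow$(iii), with the triangle inequality for $\hat{d}$ and the fact that $\zeta_{p}(K)=0$ exactly for $p$-Right sets, supplies exactly the details the paper omits.
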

\begin{proof}
 (i) $ \Rightarrow $ (ii)  Suppose that $ T \in PwC_{p}(X,Y) .$\ Theorem \ref{t11} implies that, $T^{\ast}(B_{Y^{\ast}})  $ is a $ p$-Right set.\ Since, $ X $ has the $ p $-$ (SR)$  property, $T^{\ast}(B_{Y^{\ast}} ) $ is
relatively weakly compact and so, $ T $ is weakly compact.\\
(ii) $ \Rightarrow $ (iii) It is trivial.\\
 (iii) $ \Rightarrow $ (i)   If   $X $ does not have the $ p $-$(SR)$ property, then there exists a $ p $-Right subset $ K $   in $ X^{\ast} $ so that it is not relatively weakly compact.\ So, there is a sequence
$ (x^{\ast}_{n})_{n}\subset K $ with no weakly convergent subsequence.\ Now, we show that the operator
$ T:X\rightarrow \ell_{\infty}$ defined by $ T (x) = (x^{\ast}_{n}(x)) $ for all $ x\in X $ is  pseudo weakly compact
of order $ p, $ but it is not weakly
compact.\ As $ (x^{\ast}_{n})_{n}\subset K  $ is a  $ p $-Right set, for every  $ p $-Right null sequence $ (x_{m})_{m}, $
 in $ X, $ we have $\displaystyle\lim_{m\rightarrow\infty} \Vert T(x_{m})\Vert=\displaystyle\lim_{m\rightarrow\infty}\sup_{n}\vert x^{\ast}_{n}(x_{m})\vert=0, $
hence, $ T\in PwC_{p} (X,\ell_{\infty}).$\ It is clear that $ T^{\ast}(\lambda_{n})=\displaystyle\sum_{n=1}^{\infty}\lambda_{n}x^{\ast}_{n} $  for every $ (\lambda_{n})_{n} \in
 (\ell_{\infty})^{\ast}.$\ If $ e^{1}_{n} $
 is the usual basis element in $ \ell_{1}, $
then $T^{\ast}(e^{1}_{n})=x^{\ast}_{n},  $
 for all $ n\in \mathbb{N}. $\ So  $ T^{\ast} $ is not a weakly compact operator and then $ T$  is not weakly compact.\\
 (i) $ \Rightarrow $ (iv)  It is obvious.\ (iv) $ \Rightarrow $ (i) It is immediate from Theorem \ref{t1}.\ The
equivalence of (i) $  \Leftrightarrow$ (v) is straightforward.
\end{proof}

\begin{corollary}\label{c6} If
$ X $ has the $ (DPP_{p}),$ then $ X $ has Pelczy$\acute{n} $ski's property $ (V ) $ of order $p $ if and only if  $ X $  has the $ p $-$ (SR) $ property.
\end{corollary}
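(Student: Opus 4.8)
The plan is to combine Theorem \ref{t12} with Theorem \ref{t15}, using the hypothesis $X \in (DPP_p)$ as the bridge between $p$-$(V)$ sets and $p$-Right sets in $X^{\ast}$. Recall that $X$ has Pelczy\'nski's property $(V)$ of order $p$ precisely when every $p$-$(V)$ subset of $X^{\ast}$ is relatively weakly compact, and $X$ has the $p$-$(SR)$ property precisely when every $p$-Right subset of $X^{\ast}$ is relatively weakly compact (by definition, and by the equivalence (i) $\Leftrightarrow$ (v) of Theorem \ref{t15} if a quantitative version is wanted). So both properties are statements about which bounded subsets of $X^{\ast}$ are forced to be relatively weakly compact, and the only gap between them is the gap between the two classes of sets.

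First I would record that, in complete generality, every $p$-$(V)$ set in $X^{\ast}$ is a $p$-Right set; this is Remark \ref{r3}(ii) and follows immediately from the fact that a Dunford-Pettis weakly $p$-summable sequence is in particular weakly $p$-summable. Consequently the $p$-$(SR)$ property always implies the $p$-$(V)$ property, with no hypothesis on $X$. The content of the corollary is therefore the reverse implication, and this is exactly where $(DPP_p)$ enters: by Theorem \ref{t12}, $X \in (DPP_p)$ if and only if every $p$-Right set in $X^{\ast}$ is a $p$-$(V)$ set. Under this hypothesis the two classes of subsets of $X^{\ast}$ coincide.

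Assembling the argument: assume $X \in (DPP_p)$. If $X$ has the $p$-$(V)$ property, let $K \subseteq X^{\ast}$ be an arbitrary $p$-Right set; by Theorem \ref{t12} it is a $p$-$(V)$ set, hence relatively weakly compact by the $p$-$(V)$ property, so $X$ has the $p$-$(SR)$ property. Conversely, if $X$ has the $p$-$(SR)$ property and $K \subseteq X^{\ast}$ is a $p$-$(V)$ set, then $K$ is a $p$-Right set by Remark \ref{r3}(ii), hence relatively weakly compact, so $X$ has the $p$-$(V)$ property. This closes the equivalence.

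I do not anticipate a genuine obstacle here; the corollary is essentially a bookkeeping consequence of Theorems \ref{t12} and \ref{t15}. The only point requiring mild care is to state the two characterizations of the $p$-$(V)$ and $p$-$(SR)$ properties in the same language (``every set of such-and-such type in $X^{\ast}$ is relatively weakly compact'') so that the set-theoretic inclusion of the two classes of subsets translates directly into the implication between the properties; once that is done the proof is two short paragraphs.
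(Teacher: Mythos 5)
Your proof is correct and matches the paper's in its essential content: both rest on Theorem \ref{t12} (under $(DPP_{p})$ every $p$-Right set in $X^{\ast}$ is a $p$-$(V)$ set) for the forward direction and on the general containment of $p$-$(V)$ sets among $p$-Right sets for the converse. The only difference is organizational: the paper runs the forward direction through the operator characterization of the $p$-$(SR)$ property (taking $T\in PwC_{p}(X,Y)$ and invoking Theorems \ref{t11} and \ref{t15}), whereas you argue directly at the level of subsets of $X^{\ast}$, which is slightly more economical but not a genuinely different method.
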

\begin{proof}
 Suppose that $ T \in PwC_{p}(X,Y) .$\ Theorem \ref{t11}, implies that $ T^{\ast}(B_{Y^{\ast}}) $ is a $ p $-Right set in $ X^{\ast}. $\ As $ X\in (DPP_{p}), $ it follows
from  Theorem \ref{t12} that $ T^{\ast}(B_{Y^{\ast}}) $ is a $ p $-$ (V) $  set in $ X^{\ast}. $\ Since $ X $ has  Pelczy$ \acute{n} $ski's property $ (V ) $ of order $ p, $
$ T^{\ast} $ is weakly compact and so $ T $ is weakly compact.\ Hence, Theorem \ref{t15} implies that $X $ has the  $  p  $-$ (SR)$ property.
Conversely, If $ X $
has the $ p $-$ (SR)$ property, then $ X $ has Pelczy$\acute{n} $ski's property $ (V ) $ of order $ p.$\ Since  every $ p $-$ (V) $ set  in $ X^{\ast} $ is a $ p $-Right set.
\end{proof}
\begin{remark}\rm \label{r4}
$ \rm{(i)} $ It is clear that,  if  $ K $ is a infinite compact Hausdorff metric space, then the Banach space $ C(K) $  of  all continuous functions on $ K $  has Pelczy$\acute{n} $ski's property $ (V ) $  of order $p .$\ On the other hand $ C(K) $  has the $ (DPP_{p}).$\ Hence, $ C(K) $  has the  $ p $-$ (SR) $ property .\ In particular, $ c_{0} $ and $ \ell_{\infty} $ have the $ p $-$ (SR)$  property.\ However,  {\rm (\cite[Example 8]{pvwy})}
 shows that  $ \ell_{1} $ as a subspace
of $ \ell_{\infty} $ does not have the  $ (SR)$  property  and so, does not have the $ p $-$ (SR)$  property.\\
$ \rm{(ii)} $ It is clear that, every reflexive Banach space has the $ p $-$ (SR) $ property.\ But,  there exists a non reflexive Banach space with the $ p $-$ (SR) $ property.\ For example, $ c_{0} $ has the $ p $-$  (SR)$ property, while $ c_{0} $ is not reflexive space.
\end{remark}
 If  $ Y $ is a subspace of $ X^{\ast}, $ then we define $^{\bot}Y:=\lbrace x\in X: y^{\ast}(x)=0 ~~$ for all $~~~ y^{\ast}\in Y^{\ast} \rbrace  .$

\begin{corollary}\label{c7}  
$ \rm{(i)} $ If $ X $ has both  properties of $ p $-$ (DPrcP) $ and  $ p $-$ (SR),$ then $ X $ is a reflexive space.\\
$ \rm{(ii)} $ If $  X$ is an infinite dimensional non reflexive Banach space with the $ p$-Schur property, then $  X$
does  not  have the $ p $-$ (SR)$ property.\\
$ \rm{(iii)} $ If every separable subspace of $  X$ has  the $ p $-$ (SR) $ property,  then $ X  $ has the same property.\\
$ \rm{(iv)} $ If  $  X$ has the $ p $-$ (SR)$ property,  then every quotient space of $ X $ has the same property.\\
$ \rm{(v)} $ Let $Y $ be a reflexive subspace of $ X^{\ast}. $\ If $ ^{\bot}Y$  has the $ p $-$ (SR) $ property, then $ X $ has the $ p $-$ (SR) $ property.\
\end{corollary}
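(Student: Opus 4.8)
The plan is to deduce all five assertions from the two operator characterisations already established, namely Theorem \ref{t3} (a space $X$ has the $p$-$(DPrcP)$ if and only if $PwC_p(X,Z)=L(X,Z)$ for every Banach space $Z$) and Theorem \ref{t15} (a space $X$ has the $p$-$(SR)$ property if and only if $PwC_p(X,Z)=W(X,Z)$ for every $Z$), together with one elementary fact used repeatedly: \emph{every bounded linear operator $u:W_1\rightarrow W_2$ carries $p$-Right null sequences to $p$-Right null sequences}; in particular, $p$-Right nullity of a sequence passes from a closed subspace to the whole space. Indeed, continuous linear images of weakly $p$-summable sequences are weakly $p$-summable, and if $\{w_k:k\in\mathbb{N}\}$ is a Dunford--Pettis set in $W_1$, then for every weakly null $(w^{\ast}_k)_k$ in $W_2^{\ast}$ the sequence $(u^{\ast}w^{\ast}_k)_k$ is weakly null in $W_1^{\ast}$, whence $\sup_k|w^{\ast}_k(u(w_k))|=\sup_k|(u^{\ast}w^{\ast}_k)(w_k)|\rightarrow 0$, so by {\rm (\cite[Theorem 1]{An})} the set $\{u(w_k):k\in\mathbb{N}\}$ is Dunford--Pettis in $W_2$.

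For $\rm{(i)}$, if $X$ has both properties then $L(X,Z)=PwC_p(X,Z)=W(X,Z)$ for every $Z$; taking $Z=X$ shows that $id_X$ is weakly compact, so $B_X$ is weakly compact and $X$ is reflexive. Part $\rm{(ii)}$ follows at once: a space with the $p$-Schur property has the $p$-$(DPrcP)$ by Remark \ref{r1}$\rm{(i)}$, so if such an infinite dimensional $X$ also had the $p$-$(SR)$ property it would be reflexive by $\rm{(i)}$, a contradiction.

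For $\rm{(iii)}$, suppose $X$ fails the $p$-$(SR)$ property. By Theorem \ref{t15} there exist a Banach space $Z$ and $T\in PwC_p(X,Z)\setminus W(X,Z)$; since $T(B_X)$ is not relatively weakly compact, the Eberlein-\v{S}mulian theorem gives a sequence $(x_n)_n\subseteq B_X$ with $(T(x_n))_n$ having no weakly convergent subsequence. Set $W:=\overline{[x_n:n\in\mathbb{N}]}$, a separable closed subspace, and $S:=T|_W$. By the fact above $S\in PwC_p(W,Z)$, while $(S(x_n))_n=(T(x_n))_n$ witnesses that $S$ is not weakly compact; thus $W$ is a separable subspace failing the $p$-$(SR)$ property, contrary to hypothesis. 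For $\rm{(iv)}$, let $q:X\rightarrow X/M$ be a quotient map and $T\in PwC_p(X/M,Z)$; then $T\circ q\in PwC_p(X,Z)$, because $q$ sends $p$-Right null sequences of $X$ to $p$-Right null sequences of $X/M$, which $T$ then sends to norm-null sequences. Since $X$ has the $p$-$(SR)$ property, $T\circ q$ is weakly compact, and as $q(B_X)$ contains the open unit ball of $X/M$, relative weak compactness of $(T\circ q)(B_X)$ forces that of $T(B_{X/M})$; hence $T$ is weakly compact and $X/M$ has the $p$-$(SR)$ property.

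For $\rm{(v)}$, first observe that $B_Y$, being weakly compact in the reflexive space $Y$, is weakly---hence weak$^{\ast}$---compact in $X^{\ast}$, so $Y$ is weak$^{\ast}$-closed by the Krein-\v{S}mulian theorem; consequently $({}^{\bot}Y)^{\perp}=Y$ and $(X/{}^{\bot}Y)^{\ast}\cong Y$ is reflexive, so $N:={}^{\bot}Y$ has a reflexive quotient $X/N$. Now let $T\in PwC_p(X,Z)$ be arbitrary. By the fact above $T|_N\in PwC_p(N,Z)$, and since $N$ has the $p$-$(SR)$ property, $T|_N$ is weakly compact; equivalently $T^{\ast\ast}$ maps $N^{\ast\ast}$ (viewed as $N^{\perp\perp}\subseteq X^{\ast\ast}$) into $Z$. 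Using the identification $X^{\ast\ast}/N^{\perp\perp}\cong(X/N)^{\ast\ast}=X/N$ together with the surjectivity of $q:X\rightarrow X/N$, each $x^{\ast\ast}\in X^{\ast\ast}$ can be written $x^{\ast\ast}=x+\eta$ with $x\in X$ and $\eta\in N^{\perp\perp}$, so $T^{\ast\ast}(x^{\ast\ast})=T(x)+T^{\ast\ast}(\eta)\in Z$. Hence $T^{\ast\ast}(X^{\ast\ast})\subseteq Z$, i.e.\ $T$ is weakly compact; as $T$ was arbitrary, Theorem \ref{t15} yields that $X$ has the $p$-$(SR)$ property. The only genuinely delicate point is this last lifting step in $\rm{(v)}$, where reflexivity of $X/{}^{\bot}Y$ is used to split the bidual; parts $\rm{(i)}$--$\rm{(iv)}$ are essentially bookkeeping around the operator characterisations already in hand.
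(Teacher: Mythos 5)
Your proposal is correct, and parts (i)--(iv) follow essentially the same route as the paper: (i) and (ii) are the identity-operator argument via Theorem \ref{t15}; (iii) is the paper's separable-span argument stated in contrapositive form; (iv) differs only cosmetically, in that you transfer weak compactness through the quotient map on the image side (using that $q(B_X)$ contains the open unit ball of $X/M$) where the paper passes to adjoints and uses that $Q^{\ast}$ is an isomorphism onto its range --- both are fine. Part (v) is where you genuinely diverge. The paper invokes {\rm (\cite[Theorem 1.10.6]{m})} to realize the restriction map $X^{\ast}\rightarrow({}^{\bot}Y)^{\ast}$ as $i\circ Q$ with $Q:X^{\ast}\rightarrow X^{\ast}/Y$ the quotient map, deduces that $Q\circ T^{\ast}$ is weakly compact, and then passes from this to weak compactness of $T^{\ast}$ --- a step that tacitly uses a lifting through the reflexive kernel $Y$ and is left unexplained. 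You instead note that $B_Y$ is weak$^{\ast}$-compact, so $Y$ is weak$^{\ast}$-closed, $({}^{\bot}Y)^{\perp}=Y$, and $X/{}^{\bot}Y$ is reflexive; then you split $X^{\ast\ast}=X+N^{\perp\perp}$ via $q^{\ast\ast}$ and conclude $T^{\ast\ast}(X^{\ast\ast})\subseteq Z$ from weak compactness of $T|_{N}$, finishing with Gantmacher. This makes the role of reflexivity of $Y$ completely explicit and avoids the unjustified lifting step, so it is arguably the more transparent argument; the paper's version is shorter but leans on the reader to supply the final implication. One small blemish: in your preliminary ``fact'' the displayed estimate $\sup_k|w^{\ast}_k(u(w_k))|\rightarrow 0$ collides the index of the functionals with the index of the points --- it should read $\lim_n\sup_k|(u^{\ast}w^{\ast}_n)(w_k)|=0$ for each weakly null $(w^{\ast}_n)_n$ in $W_2^{\ast}$, which is exactly Andrews' criterion; the intended argument is clearly correct and is the same transfer principle the paper itself uses.
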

\begin{proof}
$ \rm{(i)} $ Suppose that $ X $ has the $ p $-$(DPrcP). $\ Therefore, the identity operator $ id_{X} : X \rightarrow X  $ is  pseudo weakly compact 
of order $ p. $\ As $ X$ has the $ p $-$ (SR)$ property, it follows from Theorem \ref{t15} that $ id_{X} $ is weakly compact  and so $ X $ is reflexive.\\
$ \rm{(ii)} $ Since $ X\in C_{p},$  the identity operator $ id_{X} : X \rightarrow X  $ is  $ p $-convergent and so, it is
pseudo weakly compact of order $ p. $\ It is clear that  $  id_{X} $ is not weakly compact.\ Theorem \ref{t15} implies that $ X $ does not have the $ p $-$ (SR)$ property.\\
$ \rm{(iii)} $ Let $ (x_{n})_{n} $ be a sequence in $ B_{X} $ and let $ Z = [x_{n} : n \in \mathbb{N}] $ be the closed linear
span of $ (x_{n})_{n} .$\ Since $  Z$ is a separable subspace of $ X, $ $ Z $ has the $ p $-$(SR)$ property.\ Now, let
$ T : X \rightarrow Y $ be a pseudo weakly compact operator
of order $ p. $\ It is clear that  $ T_{\vert Z} $ is pseudo weakly compact operator
of order $ p. $\ Therefore, Theorem \ref{t15}, implies that  $ T_{\vert Z} $ is weakly compact.\ Hence,
 there is a subsequence $ (x_{n_{k}} )_{k} $ of $  (x_{n})_{n} $ so that $ T(x_{n_{k}} ) $ is
weakly convergent.\ Thus $  T$ is weakly compact.\ Now an appeal to Theorem \ref{t15} completes the proof.\\
$ \rm{(iv)} $  Suppose that $  X$ has the  $ p $-$(SR)  $ property.\ Let $  Z$ be a quotient space
of $  X$  and $ Q: X\rightarrow Z $ be a quotient map.\ Let $ T:Z\rightarrow Y $ be a pseudo weakly compact operator  of order $ p. $\ It is clear that $ T\circ Q :Z\rightarrow Y $ is a pseudo weakly compact operator  of order $ p. $\ Hence
Theorem \ref{t15} implies that $ T\circ Q $ is weakly compact and so $ (T\circ Q)^{\ast} $ is weakly compact.\
Since $Q^{\ast} (T^{\ast}(B_{Y^{\ast}}) )$ is  weakly
compact and $ Q^{\ast} $ is an isomorphism, $T^{\ast}(B_{Y^{\ast}})  $
 is weakly compact.\ Hence, $ T $ is a weakly compact operator.\ Apply Theorem \ref{t15}.\\
 $ \rm{(v)} $  By  {\rm (\cite[Theorem 1.10.6]{m})},
 there exists a quotient map  $ Q: X^{\ast}\rightarrow \frac{X^{\ast}}{Y} $ and a surjective isomorphism $ i:\frac{X^{\ast}}{Y}\rightarrow ~(^{\bot}Y) ^{\ast}$ such that $ i\circ Q:X^{\ast}\rightarrow   ~(^{\bot}Y)^{\ast} $ is $ w^{\ast} $-$ w^{\ast} $ continuous.\ So, there is $ S: ~^{\bot}Y \rightarrow X $
 with $ S^{\ast}=i\circ Q. $\ Hence, for any pseudo weakly compact operator of order $ p,~~~ $  $ T:X\rightarrow Z, $
the operator $ T\circ S: ~^{\bot}Y \rightarrow Z$ is pseudo weakly compact  of order $ p, $ that must
be weakly compact; hence,  $ S^{\ast}\circ T^{\ast} =i\circ Q\circ T^{\ast} $ is also weakly compact,
this in turn gives that $ Q\circ T^{\ast} $ must be weakly compact, since $  i$ is a surjective
isomorphism.\ Therefore
$ T^{\ast} $ and so $ T $ is weakly compact.\ The Theorem \ref{t15} completes the proof.\
\end{proof}
Notice that every relatively norm compact subset of $ X^{\ast} $ is  $ p $-Right set.\
But, the converse is not necessarily correct.\ For example, for each $ 1 < p < \infty $ and for each $ 1 < r < p^{\ast}, $
the identity operator $ id_{r} $ on $ \ell_{r} $ is pseudo weakly compact  of order $ p $ and hence the unit ball $ B_{\ell_{r^{\ast}}} $
of $ \ell_{r^{\ast}} $ is a $ p $-Right set.\\
In the following, we give a necessary and sufficient
condition that every $ p $-Right subset of $ X^{\ast} $ is relatively norm compact.\ 
Since the proof of the following result is similar to the proof of Theorem \ref{t15}, we omit its proof.\
\begin{theorem}\label{t16}
Let  $ X$ be a Banach space.\ The following
are equivalent:\
$ \rm{(i)} $   $PwC_{p} (X,Y) =K(X,Y),$ for every Banach space $ Y.$\\
$ \rm{(ii)} $ Same as $ \rm{(i)} $ with $ Y=\ell_{\infty}. $\\
$ \rm{(iii)} $ Every $ p $-Right subset of $ X^{\ast} $ is relatively  norm compact.
\end{theorem}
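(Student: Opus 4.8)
\textbf{Proof proposal for Theorem \ref{t16}.}

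The plan is to follow exactly the template of Theorem \ref{t15}, replacing ``weakly compact'' by ``compact'' and ``relatively weakly compact'' by ``relatively norm compact'' throughout, and replacing the de Blasi measure of weak non-compactness by the Hausdorff measure of non-compactness where needed. The implications $\rm{(i)}\Rightarrow\rm{(ii)}$ is trivial, so the work is in $\rm{(ii)}\Rightarrow\rm{(iii)}$ and $\rm{(iii)}\Rightarrow\rm{(i)}$.

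For $\rm{(ii)}\Rightarrow\rm{(iii)}$: let $K$ be a $p$-Right subset of $X^{\ast}$ and pick any sequence $(x^{\ast}_{n})_{n}$ in $K$. Define $T:X\rightarrow\ell_{\infty}$ by $T(x)=(x^{\ast}_{n}(x))_{n}$. Exactly as in Theorem \ref{t15}, since $K$ is a $p$-Right set, for every $p$-Right null sequence $(x_{m})_{m}$ in $X$ we have $\lim_{m}\Vert T(x_{m})\Vert=\lim_{m}\sup_{n}\vert x^{\ast}_{n}(x_{m})\vert=0$, so $T\in PwC_{p}(X,\ell_{\infty})$. By hypothesis $\rm{(ii)}$, $T$ is compact, hence $T^{\ast}$ is compact, and since $T^{\ast}(e^{1}_{n})=x^{\ast}_{n}$ for the usual basis $(e^{1}_{n})$ of $\ell_{1}\subseteq(\ell_{\infty})^{\ast}$, the sequence $(x^{\ast}_{n})_{n}$ has a norm-convergent subsequence; thus $K$ is relatively norm compact.

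For $\rm{(iii)}\Rightarrow\rm{(i)}$: given a Banach space $Y$ and $T\in PwC_{p}(X,Y)$, Theorem \ref{t11} gives that $T^{\ast}(B_{Y^{\ast}})$ is a $p$-Right subset of $X^{\ast}$, which by $\rm{(iii)}$ is relatively norm compact; hence $T^{\ast}$ is compact and therefore $T$ is compact, so $PwC_{p}(X,Y)=K(X,Y)$ (the inclusion $K(X,Y)\subseteq PwC_{p}(X,Y)$ being clear since a compact operator sends any bounded set, in particular any Dunford-Pettis weakly $p$-compact set, to a relatively norm compact set). One may also record, as in parts $\rm{(iv)}$ and $\rm{(v)}$ of Theorem \ref{t15}, the quantitative reformulations in terms of the Hausdorff measure of non-compactness $\chi$ and the quantity $\zeta_{p}$, namely $\chi(T^{\ast})\leq\zeta_{p}(T^{\ast})$ and $\chi(K)\leq\zeta_{p}(K)$, using that $\zeta_{p}(K)=0$ iff $K$ is a $p$-Right set and $\chi(K)=0$ iff $K$ is relatively norm compact.

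The only point requiring any care---and hence the ``main obstacle''---is the passage $T$ compact $\Rightarrow$ $T^{\ast}$ compact (Schauder's theorem) and back, together with checking that the relevant sequence in the dual is really extracted from $K$; everything else is a verbatim transcription of the argument for Theorem \ref{t15}, which is why the authors are content to omit it.
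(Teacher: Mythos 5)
Your proposal is correct and matches the paper's intent exactly: the paper omits the proof of Theorem \ref{t16}, stating only that it is ``similar to the proof of Theorem \ref{t15}'', and your argument is precisely that adaptation (the same $T(x)=(x^{\ast}_{n}(x))_{n}$ construction into $\ell_{\infty}$ for $\rm{(ii)}\Rightarrow\rm{(iii)}$, Theorem \ref{t11} plus Schauder's theorem for $\rm{(iii)}\Rightarrow\rm{(i)}$). No gaps.
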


It is clear that, if we speak about $  \mathcal{U} (X,Y)$ or its linear subspace $\mathcal{M},  $ then the related norm is the ideal norm $ \mathcal{A}(.) $
while, the operator norm $ \Vert .\Vert $ is applied when the space is a linear subspace of $L(X,Y) . $\
Now, we obtain some conditions for which the point evaluations $ \mathcal{M}_{1}(x) $ and $\widetilde{\mathcal{M}} _{1}(y^{\ast})$ are relatively weakly compact for all $ x \in X $ and all $ y^{\ast} \in Y^{\ast}. $\
\begin{theorem}\label{t17}
Suppose that $ X^{\ast\ast} $ and $ Y^{\ast} $ have the $ p $-$ (SR)$ property,  and $\mathcal{M}\subseteq \mathcal{U} (X,Y) $ is a closed subspace.\ If the natural restriction operator $ R: \mathcal{U} (X,Y)^{\ast}\rightarrow \mathcal{M^{\ast}} $
 is a  pseudo weakly
compact operator of order $ p, $ then all of the point evaluations $ \mathcal{M}_{1}(x) $ and $\widetilde{\mathcal{M}} _{1}(y^{\ast})$ are relatively weakly compact.\
\end{theorem}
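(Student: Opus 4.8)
The plan is to reduce the statement to the weak compactness of the two evaluation operators on $\mathcal{M}$. Indeed, $\mathcal{M}_{1}(x)=\varphi_{x}(B_{\mathcal{M}})$ and $\widetilde{\mathcal{M}}_{1}(y^{\ast})=\psi_{y^{\ast}}(B_{\mathcal{M}})$, so these point evaluation sets are relatively weakly compact exactly when $\varphi_{x}:\mathcal{M}\rightarrow Y$ and $\psi_{y^{\ast}}:\mathcal{M}\rightarrow X^{\ast}$ are weakly compact operators; by Gantmacher's theorem this is in turn equivalent to the weak compactness of the adjoints $\varphi_{x}^{\ast}:Y^{\ast}\rightarrow\mathcal{M}^{\ast}$ and $\psi_{y^{\ast}}^{\ast}:X^{\ast\ast}\rightarrow\mathcal{M}^{\ast}$. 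Thus the real target is to show that these two adjoints are weakly compact.

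The key step is a factorization through the restriction operator $R$. Let $\iota:\mathcal{M}\hookrightarrow\mathcal{U}(X,Y)$ be the inclusion map; then $R=\iota^{\ast}$, since $R(\Phi)=\Phi|_{\mathcal{M}}=\iota^{\ast}(\Phi)$ for every $\Phi\in\mathcal{U}(X,Y)^{\ast}$. Writing $\widehat{\varphi}_{x}:\mathcal{U}(X,Y)\rightarrow Y$ and $\widehat{\psi}_{y^{\ast}}:\mathcal{U}(X,Y)\rightarrow X^{\ast}$ for the evaluation operators defined on all of $\mathcal{U}(X,Y)$, one has $\varphi_{x}=\widehat{\varphi}_{x}\circ\iota$ and $\psi_{y^{\ast}}=\widehat{\psi}_{y^{\ast}}\circ\iota$, and hence, after taking adjoints, $\varphi_{x}^{\ast}=R\circ\widehat{\varphi}_{x}^{\ast}$ and $\psi_{y^{\ast}}^{\ast}=R\circ\widehat{\psi}_{y^{\ast}}^{\ast}$, where $\widehat{\varphi}_{x}^{\ast}:Y^{\ast}\rightarrow\mathcal{U}(X,Y)^{\ast}$ and $\widehat{\psi}_{y^{\ast}}^{\ast}:X^{\ast\ast}\rightarrow\mathcal{U}(X,Y)^{\ast}$ are bounded.

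Now I would bring in the hypotheses. Since $R\in PwC_{p}(\mathcal{U}(X,Y)^{\ast},\mathcal{M}^{\ast})$ and $PwC_{p}$ enjoys the ideal property, the compositions $\varphi_{x}^{\ast}=R\circ\widehat{\varphi}_{x}^{\ast}$ and $\psi_{y^{\ast}}^{\ast}=R\circ\widehat{\psi}_{y^{\ast}}^{\ast}$ are again pseudo weakly compact of order $p$, as operators from $Y^{\ast}$ and from $X^{\ast\ast}$, respectively, into $\mathcal{M}^{\ast}$. Because $Y^{\ast}$ has the $p$-$(SR)$ property, Theorem \ref{t15} gives $PwC_{p}(Y^{\ast},\mathcal{M}^{\ast})=W(Y^{\ast},\mathcal{M}^{\ast})$, so $\varphi_{x}^{\ast}$ is weakly compact; likewise, since $X^{\ast\ast}$ has the $p$-$(SR)$ property, $\psi_{y^{\ast}}^{\ast}$ is weakly compact. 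An appeal to Gantmacher's theorem then shows that $\varphi_{x}$ and $\psi_{y^{\ast}}$ are weakly compact, i.e., $\mathcal{M}_{1}(x)$ and $\widetilde{\mathcal{M}}_{1}(y^{\ast})$ are relatively weakly compact for all $x\in X$ and $y^{\ast}\in Y^{\ast}$.

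I do not expect a serious obstacle here: the only point requiring some care is the identification $R=\iota^{\ast}$ together with the two factorizations $\varphi_{x}=\widehat{\varphi}_{x}\circ\iota$ and $\psi_{y^{\ast}}=\widehat{\psi}_{y^{\ast}}\circ\iota$, after which the argument reduces to the ideal property of $PwC_{p}$, the characterization of the $p$-$(SR)$ property in Theorem \ref{t15}, and Gantmacher's theorem.
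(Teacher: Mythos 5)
Your proof is correct and takes essentially the same route as the paper: factor the adjoint of each evaluation operator through $R$, use the ideal property of $PwC_{p}$ to conclude that $\varphi_{x}^{\ast}$ and $\psi_{y^{\ast}}^{\ast}$ are pseudo weakly compact of order $p$, apply Theorem \ref{t15} with the $p$-$(SR)$ hypotheses on $Y^{\ast}$ and $X^{\ast\ast}$, and finish with Gantmacher's theorem. The only difference is presentational: the paper builds the intermediate bounded map $Y^{\ast}\rightarrow\mathcal{U}(X,Y)^{\ast}$ as $\psi\circ U_{x}$ via trace duality on $X^{\ast\ast}\widehat{\bigotimes}_{\pi}Y^{\ast}$, which is exactly your $\widehat{\varphi}_{x}^{\ast}$ (with $R=\iota^{\ast}$), so your identification is a mild and welcome streamlining of the same argument.
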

\begin{proof}
It is enough to show
that $ \phi_{x} $  and $ \psi_{y^{\ast}} $ are weakly
compact operators.\ For this purpose suppose that $ T\in \mathcal{U} (X,Y).$\ Since $ \Vert T \Vert\leq \mathcal{A}(T), $ it is not difficult to show that,
the operator $ \psi:X^{\ast\ast}\widehat{\bigotimes}_{\pi}Y^{\ast}\rightarrow \mathcal{U} (X,Y)^{\ast} $  which is defined by
$$ \vartheta\mapsto tr(T^{\ast\ast}\vartheta) =\displaystyle \sum_{n=1}^{\infty}\langle T^{\ast\ast} x^{\ast\ast} _{n}, y_{n}^{\ast}  \rangle$$ is linear and continuous, where $\vartheta=\displaystyle \sum_{n=1}^{\infty} x_{n} ^{\ast\ast}\bigotimes y_{n}^{\ast}.$\ Fix now an arbitrary element $ x\in X $ and define the operator $ U_{x}:Y^{\ast}\rightarrow X^{\ast\ast}\widehat{\bigotimes}_{\pi}Y^{\ast}$ by $ U_{x}(y^{\ast})=x\bigotimes y^{\ast}. $\
It is clear that the operator $ \phi_{x}^{\ast}= R\circ \psi \circ U_{x}$ is a pseudo weakly
compact  operator of order $ p. $\
Since $ Y^{\ast}$ has the $ p $-$ (SR) $ property, we conclude that
$ \phi_{x}^{\ast}$ is  a weakly
compact operator.\ Hence, $ \phi_{x} $  is  weakly
compact.\ Similarly, we can see that $ \psi_{y^{\ast}} $ is  weakly
compact.\
\end{proof}
\begin{theorem}\label{t18}
 Suppose that $L_{w^{\ast}}(X^{\ast} ,Y)= K_{w^{\ast}}(X^{\ast} ,Y) .$\ If $ X$ and $ Y $ have  the $ p $-$ (SR)$ property, then $ K_{w^{\ast}}(X^{\ast} ,Y) $ has the same property.
\end{theorem}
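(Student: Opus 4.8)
The plan is to run the argument through the operator characterisation of the $p$-$(SR)$ property, then to reduce $\Phi$ to its ``coordinates'' using the ideal property of $PwC_{p}$, and finally to use the hypothesis $L_{w^{*}}(X^{*},Y)=K_{w^{*}}(X^{*},Y)$ as the device that glues the coordinatewise information back together. Write $\mathcal{M}:=K_{w^{*}}(X^{*},Y)$. By Theorem~\ref{t15} it suffices to show that every pseudo weakly compact operator of order $p$ from $\mathcal{M}$ into $\ell_{\infty}$ (equivalently, into an arbitrary Banach space $Z$) is weakly compact; so fix $\Phi\in PwC_{p}(\mathcal{M},Z)$ and aim to prove that $\Phi$ is weakly compact.

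First I would pass to coordinates. For $x\in X$ and $y\in Y$ let $\iota_{x}\colon Y\to\mathcal{M}$ and $\iota^{y}\colon X\to\mathcal{M}$ be the bounded maps $\iota_{x}(y)=\iota^{y}(x)=x\otimes y$, where $x\otimes y$ is the rank-one $w^{*}$--$w$ continuous operator $x^{*}\mapsto x^{*}(x)\,y$. Since the class $PwC_{p}$ has the ideal property, $\Phi\circ\iota_{x}\in PwC_{p}(Y,Z)$ and $\Phi\circ\iota^{y}\in PwC_{p}(X,Z)$ for all $x,y$; because $X$ and $Y$ have the $p$-$(SR)$ property, Theorem~\ref{t15} shows that each of these compositions is weakly compact. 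Hence, for every $x\in X$ the set $\{\Phi(x\otimes y):\|y\|\le 1\}$ is relatively weakly compact in $Z$, and for every $y\in Y$ the set $\{\Phi(x\otimes y):\|x\|\le 1\}$ is relatively weakly compact in $Z$; equivalently, the bounded bilinear map $B(x,y):=\Phi(x\otimes y)$ is separately weakly compact. I would also record here the two features of $\mathcal{M}$ that make $X$ and $Y$ themselves (not their biduals) the relevant spaces: each $T\in\mathcal{M}$ is $w^{*}$--$w$ continuous, so the adjoint-evaluations $\psi_{y^{*}}\colon\mathcal{M}\to X$, $T\mapsto T^{*}y^{*}$, take values in $X$ rather than in $X^{**}$, while the evaluations $\varphi_{x^{*}}\colon\mathcal{M}\to Y$, $T\mapsto Tx^{*}$, are bounded; and $\|T\|=\sup\{|y^{*}(Tx^{*})|:x^{*}\in B_{X^{*}},\,y^{*}\in B_{Y^{*}}\}$ with each $T(B_{X^{*}})$ norm-compact.

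The decisive step is to promote the separate weak compactness of $\Phi$ on elementary tensors, together with the fact that $\Phi$ is pseudo weakly compact of order $p$, to weak compactness of $\Phi$ on all of $B_{\mathcal{M}}$. This is exactly where $L_{w^{*}}(X^{*},Y)=K_{w^{*}}(X^{*},Y)$ is used essentially: in the classical circle of ideas around $L(X,Y)=K(X,Y)$ (Kalton, Ruess) this equality is what removes the residual ``uniform $\ell_{1}$'' obstruction, so that the behaviour of a bounded family in $K_{w^{*}}(X^{*},Y)$ is controlled by its behaviour on the norm-compact coordinate slices $T(B_{X^{*}})$ and on rank-one operators; concretely, coordinatewise relative weak compactness (relative weak compactness of all the sets $\{Tx^{*}\}$ in $Y$ and $\{T^{*}y^{*}\}$ in $X$) upgrades to relative weak compactness of the family in $\mathcal{M}$. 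Feeding the information of the previous paragraph into this criterion along any $p$-Right null sequence $(T_{n})\subset\mathcal{M}$ against which $\Phi$ is tested forces $(\Phi T_{n})$ to have a weakly convergent subsequence, so $\Phi$ is weakly compact; Theorem~\ref{t15} then gives that $K_{w^{*}}(X^{*},Y)$ has the $p$-$(SR)$ property.

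The hard part is precisely this last relative-weak-compactness criterion for $K_{w^{*}}(X^{*},Y)$ under $L_{w^{*}}=K_{w^{*}}$, and the topological bookkeeping it requires: passing between the weak$^{*}$ and weak topologies on $X^{*}$ and on $Y$, exploiting the norm-compactness of each $T(B_{X^{*}})$ to convert weak convergence of $(Tx^{*})$ into norm convergence, and controlling the iterated limits needed to transport the coordinatewise data back to $\mathcal{M}$. Everything else — the reductions via Theorems~\ref{t11} and~\ref{t15} and the ideal property of $PwC_{p}$ — is routine.
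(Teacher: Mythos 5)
Your two reductions are fine as far as they go, but there is a genuine gap exactly at what you call the decisive step, and the tool you gesture at does not fill it. What you actually establish is that for each fixed $x\in B_{X}$ the slice $\Phi(x\otimes B_{Y})$ is relatively weakly compact in $Z$, and symmetrically in the other variable. That is information about the restriction of $\Phi$ to elementary tensors, whose closed absolutely convex hull is only the (much smaller) projective-type unit ball sitting inside $B_{\mathcal{M}}$ --- think of $K(\ell_{2},\ell_{2})$, where that hull is the trace-class ball; it controls essentially nothing about $\Phi(B_{\mathcal{M}})$. Moreover, the criterion that the hypothesis $L_{w^{\ast}}(X^{\ast},Y)=K_{w^{\ast}}(X^{\ast},Y)$ actually buys ({\rm\cite[Theorem 4.8]{g17}}) is dual to what you would need: it tests a bounded \emph{subset} $H$ of $K_{w^{\ast}}(X^{\ast},Y)$ by its evaluation images $H(x^{\ast})\subseteq Y$ and $H^{\ast}(y^{\ast})\subseteq X$ and concludes relative weak compactness of $H$; it is not a statement about an operator \emph{defined on} the operator space whose restrictions to rank-one slices are weakly compact, and I see no way to convert your slice information into the evaluation-image information it requires. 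Your closing sentence also conflates two different tests: along a $p$-Right null sequence $(T_{n})$ the conclusion $\Vert\Phi(T_{n})\Vert\rightarrow 0$ is automatic from $\Phi\in PwC_{p}$, whereas weak compactness of $\Phi$ demands weakly convergent subsequences of $(\Phi(T_{n}))$ for \emph{arbitrary} bounded $(T_{n})$.

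The paper works on the other side of the duality and thereby lands exactly where {\rm\cite[Theorem 4.8]{g17}} applies: it takes a $p$-Right set $H$, pushes it forward under the evaluation maps $T\mapsto T(x^{\ast})$ into $Y$ and $T\mapsto T^{\ast}(y^{\ast})$ into $X$ (the latter lands in $X$ rather than $X^{\ast\ast}$ precisely because the operators are $w^{\ast}$-$w$ continuous), uses that continuous linear images of $p$-Right sets are $p$-Right sets together with the $p$-$(SR)$ hypotheses on $X$ and $Y$ to make every image relatively weakly compact, and then invokes the cited criterion --- which is the only place $L_{w^{\ast}}=K_{w^{\ast}}$ enters --- to conclude that $H$ itself is relatively weakly compact. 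If you want to keep your operator-theoretic formulation via Theorem \ref{t15}, the object to which the set-level criterion should be applied is $\Phi^{\ast}(B_{Z^{\ast}})$ viewed through Theorem \ref{t11}, not the slices $\Phi(x\otimes B_{Y})$.
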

\begin{proof}
Suppose $ X $ and $ Y $ have the $ p $-$ (SR) $ property.\  Let $  H$ be a
$ p $-Right subset of $  K_{w^{\ast}}(X^{\ast} ,Y). $\
For fixed $ x^{\ast}\in X^{\ast}, $ the map $ T\mapsto T(x^{\ast}) $ is a bounded
operator from $  K_{w^{\ast}}(X^{\ast} ,Y) $ into $ Y. $\ It is easily verified that continuous linear images of $ p $-Right sets are $ p $-Right sets.\ Therefore, $  H(x^{\ast}) $
 is a $ p $-Right subset of $ Y, $ hence relatively
weakly compact.\ For fixed $ y^{\ast}\in Y^{\ast} $ the map $ T\mapsto T^{\ast}(y^{\ast}) $
 is a bounded  linear operator from
$  K_{w^{\ast}}(X^{\ast} ,Y) $ into $ X. $\ Therefore, $ H^{\ast}(y^{\ast}) $
 is a $ p $-Right  subset of $ X, $ hence  relatively weakly compact.\ Then,
by {\rm (\cite[Theorem 4.\ 8]{g17})}, $ H $ is  relatively weakly compact.
\end{proof}
As an application of Theorem \ref{t18}, we obtain a sufficient condition for the $ p $-$  ( SR)  $ property of the compact operators space.
\begin{corollary}\label{c8} Suppose that   $L(X ,Y)= K(X ,Y) .$\ If $ X^{\ast} $  and $ Y $ have the $ p $-$ (SR)$   property, then $  K(X, Y ) $ has the same property.
\end{corollary}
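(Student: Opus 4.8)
The plan is to deduce this directly from Theorem \ref{t18} by identifying $K(X,Y)$ with a space of the form $K_{w^{\ast}}(Z^{\ast},Y)$ for a suitable $Z$. The natural candidate is $Z=X^{\ast}$, so that $Z^{\ast}=X^{\ast\ast}$ and one works with $K_{w^{\ast}}(X^{\ast\ast},Y)$. First I would recall the standard fact that a compact operator $T:X\to Y$ extends canonically to a $w^{\ast}$-$w$ continuous compact operator $\widetilde{T}:X^{\ast\ast}\to Y$ (take $T^{\ast\ast}$ and observe that, because $T$ is compact, $T^{\ast\ast}$ maps $X^{\ast\ast}$ into $Y$ and is $w^{\ast}$-$w$ continuous on bounded sets, hence $w^{\ast}$-$w$ continuous since it is bounded); conversely the restriction to $X$ of any $S\in K_{w^{\ast}}(X^{\ast\ast},Y)$ is compact. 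This correspondence $T\mapsto\widetilde{T}$ is an isometric isomorphism $K(X,Y)\cong K_{w^{\ast}}(X^{\ast\ast},Y)$, and under it the hypothesis $L(X,Y)=K(X,Y)$ translates into $L_{w^{\ast}}(X^{\ast\ast},Y)=K_{w^{\ast}}(X^{\ast\ast},Y)$: indeed a $w^{\ast}$-$w$ continuous operator $X^{\ast\ast}\to Y$ restricts to a bounded operator $X\to Y$, which is compact by hypothesis, and its canonical extension recovers the original operator.

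Next I would apply Theorem \ref{t18} with the space ``$X$'' there replaced by $X^{\ast}$ and ``$Y$'' kept as $Y$. Its hypotheses require $L_{w^{\ast}}((X^{\ast})^{\ast},Y)=K_{w^{\ast}}((X^{\ast})^{\ast},Y)$, i.e. $L_{w^{\ast}}(X^{\ast\ast},Y)=K_{w^{\ast}}(X^{\ast\ast},Y)$, which we have just verified, and that $X^{\ast}$ and $Y$ have the $p$-$(SR)$ property, which is exactly our assumption. Theorem \ref{t18} then yields that $K_{w^{\ast}}(X^{\ast\ast},Y)$ has the $p$-$(SR)$ property, and transporting this back along the isometric isomorphism $K(X,Y)\cong K_{w^{\ast}}(X^{\ast\ast},Y)$ gives that $K(X,Y)$ has the $p$-$(SR)$ property, as claimed.

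The main obstacle is the bookkeeping in the identification $K(X,Y)\cong K_{w^{\ast}}(X^{\ast\ast},Y)$ and, more delicately, checking that $L(X,Y)=K(X,Y)$ really does force $L_{w^{\ast}}(X^{\ast\ast},Y)=K_{w^{\ast}}(X^{\ast\ast},Y)$ rather than just some weaker statement; one must be careful that a $w^{\ast}$-$w$ continuous operator on $X^{\ast\ast}$ is genuinely determined by its restriction to $X$ (this uses that $B_X$ is $w^{\ast}$-dense in $B_{X^{\ast\ast}}$ together with $w^{\ast}$-$w$ continuity on the ball, plus boundedness). Since the $p$-$(SR)$ property is defined via $p$-Right subsets of the dual, one should also confirm that the isometric isomorphism at the level of operator spaces induces the expected identification of the duals and of the relevant $p$-Right sets; but this is routine functoriality, so I expect no real difficulty there. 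An alternative, if one prefers to avoid the extension argument, is to reprove Corollary \ref{c8} from scratch by mimicking the proof of Theorem \ref{t18}: given a $p$-Right subset $H$ of $K(X,Y)$, use the evaluation maps $T\mapsto Tx$ and $T\mapsto T^{\ast}y^{\ast}$ to produce $p$-Right subsets of $Y$ and of $X^{\ast}$, invoke the $p$-$(SR)$ property of $Y$ and of $X^{\ast}$ to get relative weak compactness of the point evaluations, and then apply the cited result \cite{g17} together with $L(X,Y)=K(X,Y)$ to conclude that $H$ is relatively weakly compact.
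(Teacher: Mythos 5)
Your proposal is correct and follows exactly the route the paper intends: the paper states Corollary \ref{c8} as an immediate application of Theorem \ref{t18}, and your argument simply makes precise the identification $K(X,Y)\cong K_{w^{\ast}}(X^{\ast\ast},Y)$ (with $L(X,Y)=K(X,Y)$ forcing $L_{w^{\ast}}(X^{\ast\ast},Y)=K_{w^{\ast}}(X^{\ast\ast},Y)$, since $w^{\ast}$-$w$ continuous operators on $X^{\ast\ast}$ are determined by their restrictions to $X$) so that Theorem \ref{t18} applies with $X^{\ast}$ in place of $X$. The bookkeeping you flag is handled correctly, so no further work is needed.
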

Ghenciu in \cite{g8}
investigated whether
the projective tensor product of two Banach spaces $ X $ and $ Y $ has the sequentially Right
property when $  X$ and $  Y$ have this property.\  Here,
we investigate  whether
the projective tensor product and injective tensor product of two Banach spaces $ X $ and $ Y $ has the $ p $-sequentially Right
property.\ 
\begin{theorem}\label{t19}
Suppose that $ X$ has the $ p $-$ (SR) $ property and $ Y $ is a reflexive space.\ If $ L(X,Y^{\ast}) = K(X,Y^{\ast}),$
 then
 $ X \widehat{\bigotimes}_{\pi} Y $ and $ X \widehat{\bigotimes}_{\varepsilon} Y $ have the $ p $-$(SR)$ property.\
\end{theorem}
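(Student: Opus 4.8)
The plan is to use the operator-theoretic characterization of the $p$-$(SR)$ property provided by Theorem \ref{t15}: a Banach space $Z$ has the $p$-$(SR)$ property if and only if $PwC_p(Z,W)=W(Z,W)$ for every Banach space $W$. So I would take an arbitrary Banach space $W$ and a pseudo weakly compact operator $T$ of order $p$ from $X\widehat{\bigotimes}_{\pi}Y$ (respectively $X\widehat{\bigotimes}_{\varepsilon}Y$) into $W$, and aim to show $T$ is weakly compact. The first step is to recall the standard identification $L(X,Y^{\ast})\cong (X\widehat{\bigotimes}_{\pi}Y)^{\ast}$; together with the hypothesis $L(X,Y^{\ast})=K(X,Y^{\ast})$ and reflexivity of $Y$, this should control the dual of the projective tensor product and, since $Y$ is reflexive, give a concrete handle on $X\widehat{\bigotimes}_{\varepsilon}Y$ as well (when $Y$ is reflexive and $L(X,Y^{\ast})=K(X,Y^{\ast})$, the injective and projective tensor norms on $X\otimes Y$ tend to coincide, so that $X\widehat{\bigotimes}_{\pi}Y = X\widehat{\bigotimes}_{\varepsilon}Y$; I would verify this and thereby reduce both assertions to a single one).

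Next I would exploit the canonical embedding $X\hookrightarrow X\widehat{\bigotimes}_{\pi}Y$ obtained by fixing a norm-one $y_0\in Y$ and sending $x\mapsto x\otimes y_0$, and, symmetrically, the embedding $Y\hookrightarrow X\widehat{\bigotimes}_{\pi}Y$. Restricting $T$ along these embeddings gives pseudo weakly compact operators of order $p$ out of $X$ and out of $Y$; since $X$ has the $p$-$(SR)$ property and $Y$ is reflexive (hence trivially has it), Theorem \ref{t15} makes these restrictions weakly compact. The core of the argument is then to upgrade weak compactness "on each factor" to weak compactness of $T$ on the whole tensor product. For this I would pass to the adjoint: by Theorem \ref{t11}, $T^{\ast}(B_{W^{\ast}})$ is a $p$-Right subset of $(X\widehat{\bigotimes}_{\pi}Y)^{\ast}=L(X,Y^{\ast})=K(X,Y^{\ast})$, and I want to show it is relatively weakly compact. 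Using the two evaluation-type maps $S\mapsto S(x)$ from $K(X,Y^{\ast})$ into $Y^{\ast}$ and $S\mapsto S^{\ast}(y)$ from $K(X,Y^{\ast})$ into $X^{\ast}$ (valid because $Y$ is reflexive), continuity sends $T^{\ast}(B_{W^{\ast}})$ to $p$-Right sets in $Y^{\ast}$ and $X^{\ast}$; since $X^{\ast}$-side is handled by $X$'s $p$-$(SR)$ property via a duality argument and the $Y^{\ast}$-side by reflexivity, both images are relatively weakly compact, and then a result in the spirit of Theorem \ref{t18} (the lifting criterion for relative weak compactness in a compact-operator space, e.g.\ \cite[Theorem 4.8]{g17}) yields that $T^{\ast}(B_{W^{\ast}})$ itself is relatively weakly compact. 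Hence $T$ is weakly compact, and Theorem \ref{t15} gives the $p$-$(SR)$ property of $X\widehat{\bigotimes}_{\pi}Y$; the injective case follows from the identification of the two tensor norms established at the outset.

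The main obstacle I anticipate is the last step: going from "relatively weakly compact images under all the point-evaluation maps" back to "relatively weakly compact in $K(X,Y^{\ast})$". This is exactly the kind of statement that needs a structural theorem about weak compactness in spaces of compact operators (the analogue of Kalton's theorem), and it is where the hypothesis $L(X,Y^{\ast})=K(X,Y^{\ast})$ is genuinely used — without it, $(X\widehat{\bigotimes}_{\pi}Y)^{\ast}$ need not be a compact-operator space and the criterion fails. A secondary technical point is checking carefully that $X^{\ast}$ inherits enough from the $p$-$(SR)$ property of $X$ to make the $S\mapsto S^{\ast}(y)$ images relatively weakly compact; I would route this through Theorem \ref{t11} and Theorem \ref{t15}(v), reflexivity of $Y$ being what makes $S^{\ast}(y)$ well-defined and the map bounded. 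The injective tensor product assertion, once the norm identification is in place, requires no separate work.
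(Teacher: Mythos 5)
Your treatment of the projective case is essentially the paper's proof, just phrased through operators rather than sets: the paper takes a $p$-Right set $H\subseteq (X\widehat{\bigotimes}_{\pi}Y)^{\ast}\simeq L(X,Y^{\ast})=K(X,Y^{\ast})$, shows that for each $x\in X$ the sequence $(x\otimes y_n)_n$ is $p$-Right null whenever $(y_n)_n$ is $p$-Right null in $Y$ (and symmetrically $(x_n\otimes y)_n$ for $p$-Right null $(x_n)_n$ in $X$), deduces that the point evaluations $H(x)\subseteq Y^{\ast}$ and $H^{\ast}(y)\subseteq X^{\ast}$ are $p$-Right sets, hence relatively weakly compact (by reflexivity of $Y^{\ast}$ and by the $p$-$(SR)$ property of $X$, respectively), and then invokes a lifting theorem for weak compactness in the operator space (Theorem~3 of \cite{g2}; you cite \cite[Theorem 4.8]{g17} via Theorem~\ref{t18}, which plays the same role). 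Your adjoint formulation via Theorems~\ref{t11} and~\ref{t15} is equivalent, and your identification of where $L(X,Y^{\ast})=K(X,Y^{\ast})$ is genuinely needed is accurate. One small point: the relative weak compactness of the $Y^{\ast}$-side evaluations needs no $p$-Right structure at all, only boundedness plus reflexivity of $Y^{\ast}$.

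The genuine gap is your proposed reduction of the injective case to the projective one by asserting that $X\widehat{\bigotimes}_{\pi}Y=X\widehat{\bigotimes}_{\varepsilon}Y$ under the hypotheses. This is false in general: the two norms coincide exactly when every operator in $L(X,Y^{\ast})$ is integral, and $L(X,Y^{\ast})=K(X,Y^{\ast})$ is far weaker. Take $X=c_{0}$ and $Y=\ell_{2}$, which satisfy all hypotheses of the theorem ($c_0$ has the $p$-$(SR)$ property by Remark~\ref{r4}, $\ell_2$ is reflexive, and $L(c_0,\ell_2)=K(c_0,\ell_2)$ by Pitt's theorem); yet not every compact operator $c_0\to\ell_2$ is integral, so $c_0\widehat{\bigotimes}_{\pi}\ell_2\neq c_0\widehat{\bigotimes}_{\varepsilon}\ell_2$. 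Nor can you fall back on hereditary/quotient stability, since $X\widehat{\bigotimes}_{\varepsilon}Y$ is neither a subspace nor a quotient of $X\widehat{\bigotimes}_{\pi}Y$ in general. The injective case must be argued in parallel (as the paper does, declaring the proof ``similar''): the dual $(X\widehat{\bigotimes}_{\varepsilon}Y)^{\ast}$ is the space of integral operators from $X$ to $Y^{\ast}$, which under the hypothesis sits inside $K(X,Y^{\ast})$, and the same point-evaluation and lifting argument then applies to a $p$-Right subset of that dual. With that repair your proposal matches the intended proof.
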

\begin{proof} 
We only prove the result for the projective tensor product. The other proof is similar.\
 Let $ H $ be a $ p $-Right subset of $ (X \widehat{\bigotimes}_{\pi} Y)^{\ast}\simeq L(X,Y^{\ast}). $\ Let $ (T_{n}) $ be an arbitrary sequence in $ H $ and let $ x\in X. $\ We first show that $ \lbrace T_{n}(x) : n\in \mathbb{N} \rbrace$
is a $ p $-Right subset of $ Y^{\ast}. $\ Let $ (y_{n})_{n} $ be a $ p $-Right null sequence sequence in $ Y. $\  For each $ n\in \mathbb{N},  $
$$  \langle  T_{n} (x) ,y_{n} \rangle=\langle T_{n}, x\otimes y_{n}\rangle .$$
We claim that $  (x \otimes y_{n})_{n}$ is a  $ p $-Right null sequence  in $ X \widehat{\bigotimes}_{\pi} Y. $\ If $ T\in (X \widehat{\bigotimes}_{\pi} Y)^{\ast}\simeq L(X,Y^{\ast}),  $ then
$$ \vert\langle T, x\otimes y_{n}\rangle\vert=\vert \langle T(x), y_{n} \rangle\vert \in \ell_{p},$$
since $ (y_{n}) _{n}$ is weakly $ p $-summable.\ Thus $  (x \otimes y_{n})_{n}$ is weakly $ p $-summable in $ X \widehat {\bigotimes}_{\pi} Y. $\
 Let $ (A_{n})_{n} $ be a weakly null sequence in $ (X \widehat {\bigotimes}_{\pi} Y)^{\ast}\simeq L(X,Y^{\ast}). $\ Since the map $ \phi_{x} : L(X,Y^{\ast})\rightarrow Y^{\ast},$
 $ \phi_{x}(T) =T(x)$
 is linear and bounded, $ (A_{n}(x)) $ is weakly null in $ Y^{\ast}. $\ Therefore
$$ \vert \langle A_{n}, x\otimes y_{n}\rangle \vert =\vert \langle A_{n}(x), y_{n}\rangle\vert\rightarrow 0,$$
since $ (y_{n})_{n} $ is a Dunford-Pettis sequence  in $ Y. $\
Therefore,  $ (x  \otimes y_{n})_{n} $ is a Dunford-Pettis sequence in $ X \widehat {\bigotimes}_{\pi} Y. $\ Hence, $ (x\otimes y_{n})_{n} $ is  $ p $-Right null in $ X \widehat {\bigotimes}_{\pi} Y$ and so,
$ \lbrace T_{n}(x) :n\in  \mathbb{N}\rbrace$ is a $ p $-Right set in $ Y^{\ast}. $\ Therefore,  
$ \lbrace T_{n}(x) :n\in  \mathbb{N}\rbrace$ is a relatively weakly compact.\ Now, let $ y\in Y^{\ast\ast} =Y$ and $ (x_{n})_{n} $ be a 
$ p $-Right null sequence in $ X. $\ An argument similar to the one above shows that $ (x_{n}\otimes y)_{n} $ is a p-Right null sequence in
$ X \widehat {\bigotimes}_{\pi} Y. $\ Hence,
\begin{center}
$  \vert \langle  T^{\ast}_{n}(y), x_{n}  \rangle \vert=\vert \langle  T_{n}(x_{n}) ,y\rangle\vert=\vert  \langle   T_{n}, x_{n}\otimes y \rangle\vert\rightarrow 0,$
\end{center}
since $ (T_{n})_{n} $ is a $ p $-Right set.\ Therefore $ \lbrace T^{\ast}_{n} (y):n\in \mathbb{N}\rbrace$ is a $ p $-Right subset of $ X^{\ast}.$\ Hence, $ \lbrace T^{\ast}_{n} (y):n\in \mathbb{N}\rbrace$  is relatively weakly compact.\ Theorem 3 of \cite{g2} implies that $ H $ is relatively weakly compact.\
\end{proof}
For every  $ n\in \mathbb{N},$  we
denote the canonical projection from $ (\displaystyle\sum_{n=1}^{\infty}\oplus X_{n})_{\ell_{r}} $ 
 into $ X_{n} $ by $ \pi_{n} .$\ Also, we denote the canonical
projection from $ (\displaystyle\sum_{n=1}^{\infty}\oplus X^{\ast}_{n})_{\ell_{r^{\ast}}} $
 onto $ X^{\ast}_{n} $
 by $ P_{n} .$\\
As an immediate consequence of the Theorem \ref{t12} and {\rm (\cite[Theorem 3.1]{ccl1})}, we obtain the following result:
\begin{theorem}\label{t20}
Let $ 1 < p<\infty $ and $ (X_{n})_{n}  $ be a sequence of Banach spaces with $ (DPP_{p}) $ and let $ X=(\sum_{n=1}^{\infty}\oplus X_{n})_{\ell_{p}}.$\
  The following are equivalent
for a bounded subset $  K$ of $ X^{\ast}: $\\
$ \rm{(i)} $ $ K $ is a $ p^{\ast} $-Right set.\\
$ \rm{(ii)} $ $ P_{n} (K)$ is a $ q $-Right set for each $ n\in \mathbb{N} $ and
$$  \lim_{n\rightarrow\infty}\sup\lbrace \sum_{k=n}^{\infty}\Vert P_{k}x^{\ast}\Vert^{p^{\ast}}    : x^{\ast}\in K\rbrace=0.$$
\end{theorem}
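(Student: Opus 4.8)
The plan is to read off the equivalence from the analogous decomposition of $p^{\ast}$-$(V)$ sets in the dual of an $\ell_{p}$-sum contained in \cite[Theorem 3.1]{ccl1}, using Theorem \ref{t12} to trade $(V)$ sets for Right sets on the summands. First I would fix the canonical identification $X^{\ast}\cong(\sum_{n=1}^{\infty}\oplus X_{n}^{\ast})_{\ell_{p^{\ast}}}$ together with the projections $P_{n}:X^{\ast}\to X_{n}^{\ast}$, so that $\Vert x^{\ast}\Vert^{p^{\ast}}=\sum_{n}\Vert P_{n}x^{\ast}\Vert^{p^{\ast}}$, and observe that $P_{n}$ is the adjoint of the canonical isometric inclusion $\iota_{n}:X_{n}\hookrightarrow X$. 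Since any bounded operator sends Dunford--Pettis sets to Dunford--Pettis sets and weakly $p^{\ast}$-summable sequences to weakly $p^{\ast}$-summable sequences, $\iota_{n}$ carries $p^{\ast}$-Right null sequences of $X_{n}$ to $p^{\ast}$-Right null sequences of $X$. Finally, the $(DPP_{p})$ hypothesis on each $X_{n}$, via Theorem \ref{t12} together with Remark \ref{r3}(ii), identifies the relevant Right sets in each $X_{n}^{\ast}$ with the corresponding $(V)$ sets; hence proving the equivalence reduces to its ``$(V)$-set'' version on the summands, which is \cite[Theorem 3.1]{ccl1}.

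For (i)$\Rightarrow$(ii): assuming $K$ is a $p^{\ast}$-Right set, I would push a $q$-Right null sequence of $X_{n}$ forward by $\iota_{n}$; since it converges uniformly to zero on $K$ it then converges uniformly to zero on $P_{n}(K)$, so $P_{n}(K)$ is a $q$-Right set. The uniform tail estimate I would prove by contradiction: if it fails there are $\varepsilon>0$, integers $n_{1}<n_{2}<\cdots$ and $x_{j}^{\ast}\in K$ with $\sum_{k\ge n_{j}}\Vert P_{k}x_{j}^{\ast}\Vert^{p^{\ast}}\ge\varepsilon$, and the gliding-hump/disjointification construction of \cite[Theorem 3.1]{ccl1} --- this is exactly the place where the $(DPP_{p})$ of the blocks is used, to arrange that the block sequence is Dunford--Pettis as well as weakly $p^{\ast}$-summable --- produces a $p^{\ast}$-Right null sequence in $X$ on which $(x_{j}^{\ast})_{j}$ does not tend uniformly to zero, a contradiction.

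For (ii)$\Rightarrow$(i): translating the componentwise hypothesis into ``$P_{n}(K)$ is a $(V)$ set of the appropriate order'' and combining it with the uniform tail estimate, \cite[Theorem 3.1]{ccl1} yields that $K$ is a $p^{\ast}$-$(V)$ set, hence a $p^{\ast}$-Right set by Remark \ref{r3}(ii). I expect the only genuinely delicate point to be the exponent bookkeeping --- checking that the $(DPP_{p})$ hypothesis on the summands is precisely what licenses each Right$\leftrightarrow$$(V)$ identification used, while the global passage for $K$ itself invokes only the elementary implication $(V)\Rightarrow$ Right and so introduces no circularity. The substantive part, namely the block-sequence argument underlying the decomposition, is already carried out in \cite[Theorem 3.1]{ccl1}, which I would cite rather than reprove.
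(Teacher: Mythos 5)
Your proposal is correct and follows exactly the route the paper takes: the paper states Theorem \ref{t20} as an immediate consequence of Theorem \ref{t12} (which, under $(DPP_{p})$, identifies Right sets with $(V)$ sets) together with the decomposition of such sets in duals of $\ell_{p}$-sums from \cite[Theorem 3.1]{ccl1}, and offers no further argument. Your write-up in fact supplies more detail (the inclusions $\iota_{n}$, the gliding-hump tail estimate, and the care about which direction of the Right/$(V)$ identification needs $(DPP_{p})$) than the paper itself records.
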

\begin{theorem}\label{t21}
 Let $ 1 < p<\infty $ and $ (X_{n})_{n}  $  be a sequence of Banach spaces.\ If $ X=(\displaystyle\sum_{n=1}^{\infty}\oplus X_{n})_{\ell_{p}} $
and $ 1 \leq q < p^{\ast} ,$ 
then a bounded subset $ K $ of $ X^{\ast} $ is a
$ q $-Right set if and only if each $ P_{n}(K) $ is.
\end{theorem}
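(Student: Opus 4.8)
The plan is to prove the two implications separately: the forward one is routine, while the converse rests on a gliding‑hump construction in which the strict inequality $q<p^{\ast}$ enters at exactly one point. Throughout I identify $X^{\ast}$ with $(\sum_{n}\oplus X_{n}^{\ast})_{\ell_{p^{\ast}}}$ and write $x=(x^{(k)})_{k}$, $x^{\ast}=(x^{\ast(k)})_{k}$ for the coordinate expansions, so that $x^{\ast}(x)=\sum_{k}x^{\ast(k)}(x^{(k)})$ and, by H\"older's inequality with exponents $p,p^{\ast}$, $\sum_{k>N}|x^{\ast(k)}(x^{(k)})|\le\|x^{\ast}\|\,\bigl(\sum_{k>N}\|x^{(k)}\|^{p}\bigr)^{1/p}$. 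Also $\pi_{k}:X\to X_{k}$ and the coordinate embeddings $j_{k}:X_{k}\to X$ are contractions.

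For $(\Rightarrow)$, fix $n$. One has $(P_{n}x^{\ast})(a)=x^{\ast}(j_{n}a)$ for $x^{\ast}\in X^{\ast}$, $a\in X_{n}$. If $(a_{m})_{m}$ is $q$-Right null in $X_{n}$, then $(j_{n}a_{m})_{m}$ is weakly $q$-summable in $X$ and $\{j_{n}a_{m}:m\}=j_{n}(\{a_{m}:m\})$ is a Dunford--Pettis set (a bounded linear operator carries Dunford--Pettis sets to Dunford--Pettis sets, its adjoint being weak-to-weak continuous); hence $(j_{n}a_{m})_{m}$ is $q$-Right null in $X$, and $\sup_{x^{\ast}\in K}|(P_{n}x^{\ast})(a_{m})|=\sup_{x^{\ast}\in K}|x^{\ast}(j_{n}a_{m})|\to0$. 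So $P_{n}(K)$ is a $q$-Right set; note this half does not use $q<p^{\ast}$.

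For $(\Leftarrow)$, assume every $P_{n}(K)$ is a $q$-Right set but $K$ is not; then, after passing to a subsequence, there are $\varepsilon>0$, a $q$-Right null sequence $(x_{m})_{m}$ in $X$ and $x_{m}^{\ast}\in K$ with $|x_{m}^{\ast}(x_{m})|>\varepsilon$ for all $m$; set $C:=\sup_{x^{\ast}\in K}\|x^{\ast}\|$. Since $\pi_{k}$ is bounded linear, each $(x_{m}^{(k)})_{m}$ is $q$-Right null in $X_{k}$; as $x_{l}^{\ast(k)}=P_{k}(x_{l}^{\ast})\in P_{k}(K)$, the $q$-Right property of $P_{k}(K)$ gives $s_{k}(m):=\sup_{l}|x_{l}^{\ast(k)}(x_{m}^{(k)})|\to0$ as $m\to\infty$, hence $\sum_{k\le N}s_{k}(m)\to0$ as $m\to\infty$ for every fixed $N$. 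The core step is a gliding‑hump selection of $0=N_{0}<N_{1}<N_{2}<\cdots$ and $m_{1}<m_{2}<\cdots$: let $y_{i}^{\ast}$ be the truncation of $x_{m_{i}}^{\ast}$ to the coordinates $N_{i-1}<k\le N_{i}$, so the $y_{i}^{\ast}$ are pairwise disjointly supported, $\|y_{i}^{\ast}\|\le C$, and one aims for $|y_{i}^{\ast}(x_{m_{i}})|>\varepsilon/2$ and $\eta_{i}:=\sum_{j\ne i}|y_{j}^{\ast}(x_{m_{i}})|$ as small as we please. At stage $i$ one first chooses $m_{i}$ large: using $\sum_{k\le N_{i-1}}s_{k}(m)\to0$ the head $\sum_{k\le N_{i-1}}|x_{m_{i}}^{\ast(k)}(x_{m_{i}}^{(k)})|$ becomes tiny, and since each already-built $y_{j}^{\ast}$ ($j<i$) is a fixed element of $X^{\ast}$ applied to the weakly null $(x_{m})_{m}$, the numbers $|y_{j}^{\ast}(x_{m_{i}})|$ become tiny; then one chooses $N_{i}$ large: using $\sum_{k>N}\|x_{m_{l}}^{(k)}\|^{p}\to0$ for $l\le i$ together with the H\"older estimate above, the tail $\sum_{k>N_{i}}|x_{m_{i}}^{\ast(k)}(x_{m_{i}}^{(k)})|$ becomes tiny (giving $|y_{i}^{\ast}(x_{m_{i}})|>\varepsilon/2$) while at the same time $|y_{j}^{\ast}(x_{m_{i}})|$ is controlled for all later $j>i$.

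Finally, because $q<p^{\ast}$ there is a nonincreasing $(\lambda_{i})\in\ell_{p^{\ast}}\setminus\ell_{q}$, e.g. $\lambda_{i}=i^{-1/q}$, and by tuning the errors in the gliding hump one may arrange $\eta_{i}=o(\lambda_{i})$. As the $y_{i}^{\ast}$ are disjointly supported with $\|y_{i}^{\ast}\|\le C$ and $(\lambda_{i})\in\ell_{p^{\ast}}$, the series $z^{\ast}:=\sum_{i}\lambda_{i}y_{i}^{\ast}$ converges in $X^{\ast}=(\sum_{n}\oplus X_{n}^{\ast})_{\ell_{p^{\ast}}}$, and for all large $i$
$$|z^{\ast}(x_{m_{i}})|\ge\lambda_{i}\,|y_{i}^{\ast}(x_{m_{i}})|-\lambda_{1}\,\eta_{i}\ge\frac{\varepsilon}{4}\,\lambda_{i},$$
whence $\sum_{i}|z^{\ast}(x_{m_{i}})|^{q}\ge(\varepsilon/4)^{q}\sum_{i\text{ large}}\lambda_{i}^{q}=\infty$. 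But $(x_{m})_{m}$ is weakly $q$-summable and $z^{\ast}\in X^{\ast}$, so $(z^{\ast}(x_{m_{i}}))_{i}\in\ell_{q}$ --- a contradiction. Hence $K$ is a $q$-Right set. I expect the main obstacle to be the bookkeeping in the gliding-hump step, i.e. ordering the choices of the $m_{i}$ and $N_{i}$ so that the head, the tail, and all off-diagonal terms are controlled simultaneously; the hypothesis $q<p^{\ast}$ is used only to produce the scalars $(\lambda_{i})\in\ell_{p^{\ast}}\setminus\ell_{q}$.
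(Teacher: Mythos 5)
Your overall strategy is sound, the forward implication is handled correctly, and the converse begins exactly as the paper's proof does; but the two arguments then diverge in how the hypothesis $q<p^{\ast}$ is exploited. Both proofs run a gliding hump to produce disjointly supported truncations $y_{j}^{\ast}$ of selected $x_{n_{j}}^{\ast}\in K$ with $|\langle y_{j}^{\ast},x_{n_{j}}\rangle|>\varepsilon_{0}/2$. The paper then notes that, being disjointly supported in the $\ell_{p^{\ast}}$-sum, $(y_{j}^{\ast})_{j}$ is equivalent to the unit vector basis of $\ell_{p^{\ast}}$, so for every operator $T:\ell_{q^{\ast}}\to X$ the composite $T^{\ast}R$ is compact by Pitt's theorem (this is where $q<p^{\ast}$ enters); by a characterization of $q$-$(V)$ sets from Chen--Ch\'avez-Dom\'inguez--Li, $\{y_{j}^{\ast}\}$ is then a $q$-$(V)$ set, hence a $q$-Right set, which contradicts $|\langle y_{j}^{\ast},y_{j}\rangle|>\varepsilon_{0}/2$ for the $q$-Right null sequence $(y_{j})$. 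You instead aggregate the blocks into one functional $z^{\ast}=\sum_{i}\lambda_{i}y_{i}^{\ast}$ with $(\lambda_{i})\in\ell_{p^{\ast}}\setminus\ell_{q}$ and contradict weak $q$-summability of $(x_{m})$ directly. Your route is more self-contained (no Pitt, no external $q$-$(V)$ characterization) at the price of heavier bookkeeping, and it uses $q<p^{\ast}$ only through the existence of $(\lambda_{i})$.

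That bookkeeping contains the one genuine flaw: the quantity $\eta_{i}=\sum_{j\neq i}|y_{j}^{\ast}(x_{m_{i}})|$ cannot in general be made small, or even finite. For $j>i$ you control each term only by $C\,t_{j}$, where $t_{j}$ is the $\ell_{p}$-block-norm of $x_{m_{i}}$ over the support of $y_{j}^{\ast}$; then $\sum_{j>i}t_{j}$ is the $\ell_{1}$-norm of a sequence whose $\ell_{p}$-norm is small, which for $p>1$ yields no bound at all. The repair is standard and keeps your conclusion intact: do not estimate the forward off-diagonal terms one at a time. Put $w_{i}^{\ast}=\sum_{j>i}\lambda_{j}y_{j}^{\ast}$; by disjointness of supports $\|w_{i}^{\ast}\|\le C\bigl(\sum_{j}\lambda_{j}^{p^{\ast}}\bigr)^{1/p^{\ast}}$ independently of all future choices, $w_{i}^{\ast}$ is supported on coordinates $k>N_{i}$, and H\"older gives $|w_{i}^{\ast}(x_{m_{i}})|\le\|w_{i}^{\ast}\|\bigl(\sum_{k>N_{i}}\|x_{m_{i}}^{(k)}\|^{p}\bigr)^{1/p}$, which the choice of $N_{i}$ makes $\le\lambda_{i}\varepsilon/8$. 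Handling the finitely many terms $j<i$ as you do (fixed functionals against a weakly null sequence), the lower bound $|z^{\ast}(x_{m_{i}})|\ge\lambda_{i}|y_{i}^{\ast}(x_{m_{i}})|-\sum_{j<i}\lambda_{j}|y_{j}^{\ast}(x_{m_{i}})|-|w_{i}^{\ast}(x_{m_{i}})|\ge(\varepsilon/4)\lambda_{i}$ survives, and the contradiction with $(z^{\ast}(x_{m_{i}}))_{i}\in\ell_{q}$ goes through. With this adjustment your proof is correct.
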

\begin{proof}  It is easily verified that
continuous linear images of $ q $-Right set is $  q$-Right set.\ Therefore, we only prove the sufficient part.\ Assume that $ K $ is not a $ q $-Right set.\ Therefore, there exist $ \varepsilon_{0}>0, $ a $ q $-Right null sequence $ (x_{n})_{n} $ in $ X $ and a sequence 
$ (x^{\ast}_{n})_{n} $ in $ K  $ such that 
\begin{center}

$ \vert \langle x^{\ast}_{n}, x_{n}     \rangle \vert=\vert \displaystyle\sum _{k=1}^{\infty}  \langle  P_{k} x^{\ast}_{n} , \pi_{k} x_{n} \rangle  \vert   > \varepsilon_{0},~~~~~~~~~~~~~~n =1,2,3,...~~~~~~~~~~$  $ ~~~~~~~~~~~~~(\ast )$
\end{center}
By the assumption, we obtain
\begin{center}
$\displaystyle\lim_{n\rightarrow \infty} \vert  \langle P_{k} x^{\ast}_{n} ,\pi_{k} x_{n}      \rangle \vert=0 , ~~~~~~~~ k =1,2,3,...~~~~$ $ ~~~~~~ (\ast\ast) $
\end{center}

By induction on $ n $ in $ (\ast) $ and $  k$ in $ (\ast\ast), $ there exist two 
 strictly increasing sequences $ (n_{j})_{j} $ and $ (k_{j} )_{jn}$ of positive integers such that 
\begin{center}
$ \vert \displaystyle\sum_{k=k_{j-1}+1}^{k_{j}} \langle P_{k} x^{\ast}_{n_{j}} , \pi_{k} x_{n_{j}} \rangle\vert >\frac{\varepsilon_{0}}{2}, ~~~~~~j=1,2,3,...$
\end{center}
For each $ j = 1, 2, ..., $ we consider $ y_{j}=x_{n_{j}} $  and  $ y^{\ast}_{j} \in X^{\ast}$
by

\begin{equation*}
P_{k}y^{\ast}_{j}=
\begin{cases}
P_{k_{j}}x^{\ast}_{n_{j}} & \text{if }  k_{j-1}+1 \leq k\leq k_{j},\\
0 & \text{otherwise }.
\end{cases}
\end{equation*}
 It is clear that $ ( y_{j})_{j} $ is a $q $-Right null sequence in $ X $ such that 
\begin{center}
$ \vert\langle y^{\ast}_{j} ,y_{j}\rangle\vert=\vert \displaystyle\sum_{k=k_{j-1}+1}^{k_{j}} \langle P_{k} x^{\ast}_{n_{j}} , \pi_{k} x_{n_{j}} \rangle\vert >\frac{\varepsilon_{0}}{2}, ~~~~~~j=1,2,3,... $
\end{center}

Since the sequence $ ( y^{\ast}_{j})_{j} $
has pairwise disjoint supports, Proposition 6.4.1 of \cite{AlbKal} implies that $ ( y^{\ast}_{j})_{j} $
is equivalent to the unit vector basis $ ( e^{p^{\ast}}_{j})_{j} $ of $ \ell_{p^{\ast}} .$\ Suppose that $ R $ is an isomorphic
embedding from $ \ell_{p^{\ast}} $ into $ X^{\ast} $ such that $ R( e^{p^{\ast}}_{j})=y^{\ast}_{j}  (j = 1, 2, ...).$\
 Now, let $ T $ be an any
operator from $\ell_{q^{\ast}}  $ into $ X. $\ By Pitt’s Theorem \cite{AlbKal}, the operator $ T^{\ast}R $ is compact
and hence the sequence $ (T^{\ast}(y^{\ast}_{j}))_{j}=(T^{\ast}R(e^{\ast}_{j}))_{j} $
is relatively norm compact.\ Hence, Theorem 2.3 of \cite{ccl1}  implies that the sequence $ (y^{\ast}_{j}) _{j}$ is a $ q $-$ (V) $ set and so is a
 $ q $-Right  set.\ Since $ (y_{j})_{j} $
is $ q $-Right null, we have
\begin{center}
$  \vert \langle  y^{\ast}_{n}, y_{n}\rangle\vert\leq \sup_{j}\vert \langle  y^{\ast}_{j}, y_{n}\rangle\vert\rightarrow 0$ as $  ~~~~~~n\rightarrow \infty, $
\end{center}
which is a contradiction.  
\end{proof}

\begin{theorem}\label{t22}
 Let $ (X_{n})_{n}  $  be a sequence of Banach spaces.\ If  $ 1 < r <\infty $ and $ 1 \leq p <\infty ,$
then   each  $ X_{n} $ has the $ p $-$ (SR) $ property if and only if $X=(\displaystyle\sum_{n=1}^{\infty}\oplus X_{n})_{\ell_{r}} $ has the same property. 
\end{theorem}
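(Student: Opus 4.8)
\textbf{Proof proposal for Theorem \ref{t22}.}
The plan is to prove the two implications separately, using Theorem \ref{t15} (the operator-theoretic characterization of the $p$-$(SR)$ property) together with Theorem \ref{t21} applied in the range $1 \le q < r^{\ast}$ with $q = p$; note we must first dispose of the case $p \ge r^{\ast}$ separately, or handle it by a direct argument, since Theorem \ref{t21} is stated for $q < p^{\ast}$ and here the ambient summability exponent is $r$. For the forward direction, assume each $X_n$ has the $p$-$(SR)$ property and let $H$ be a $p$-Right subset of $X^{\ast} = (\sum_n \oplus X_n^{\ast})_{\ell_{r^{\ast}}}$. Since the canonical projection $P_n : X^{\ast} \to X_n^{\ast}$ is bounded and linear, and continuous linear images of $p$-Right sets are $p$-Right sets (as observed in the proof of Theorem \ref{t18}), each $P_n(H)$ is a $p$-Right subset of $X_n^{\ast}$, hence relatively weakly compact. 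The remaining task is to upgrade ``each $P_n(H)$ relatively weakly compact'' to ``$H$ relatively weakly compact'' inside an $\ell_{r^{\ast}}$-sum; for this I would invoke the standard criterion for relative weak compactness in $\ell_{r^{\ast}}$-sums (every coordinate projection is relatively weakly compact and the tails are uniformly small), where the uniform tail condition should follow from $H$ being a $p$-Right set together with the structure of weakly $p$-summable sequences supported on disjoint blocks — this is precisely the mechanism already exploited in Theorem \ref{t21}.

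For the converse, suppose $X = (\sum_n \oplus X_n)_{\ell_r}$ has the $p$-$(SR)$ property. Each $X_n$ is (isometrically) a closed subspace of $X$ via the canonical inclusion $i_n : X_n \to X$, and $i_n$ is complemented by the projection $\pi_n$. I would argue that the $p$-$(SR)$ property passes to complemented subspaces: if $T : X_n \to Z$ is pseudo weakly compact of order $p$, then $T \circ \pi_n : X \to Z$ is pseudo weakly compact of order $p$ by the ideal property of $PwC_p$ recorded after Definition \ref{d1}, hence weakly compact by Theorem \ref{t15}, and therefore its restriction $T \circ \pi_n \circ i_n = T$ is weakly compact; a final application of Theorem \ref{t15} gives that $X_n$ has the $p$-$(SR)$ property. (Alternatively one can cite Corollary \ref{c7}(iii) or (iv) if one prefers to route through separable subspaces or quotients, but the complementation argument is cleanest here.)

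The main obstacle I anticipate is the forward direction's tail estimate: proving that a $p$-Right set $H$ in the $\ell_{r^{\ast}}$-sum has uniformly small tails $\sup_{x^{\ast}\in H}\sum_{k \ge n}\|P_k x^{\ast}\|^{r^{\ast}} \to 0$. The natural route is by contradiction in the style of the proof of Theorem \ref{t21}: if the tails are not uniformly small, one extracts a sequence in $H$ with disjointly supported ``bumps'' of size bounded below, builds a block sequence $(y_j^{\ast})$ equivalent to the unit vector basis of $\ell_{r^{\ast}}$ via Proposition 6.4.1 of \cite{AlbKal}, pairs it against a suitable $p$-Right null test sequence $(y_j)$ in $X$ (constructed from the corresponding blocks and genuinely weakly $p$-summable because it lives in the $\ell_r$-sum and $p < r^{\ast}$ forces the relevant $\ell_p$-norms to converge), and uses Pitt's theorem to see that the test functionals act like a $q$-$(V)$ set, contradicting $\langle y_j^{\ast}, y_j\rangle$ staying bounded away from $0$. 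One must be careful that the construction of $(y_j)$ yields a sequence that is both Dunford-Pettis and weakly $p$-summable; the Dunford-Pettis part comes from the block structure and the hypothesis that the $X_n$ have the relevant property, and I expect this bookkeeping — rather than any deep new idea — to be where the real work lies.
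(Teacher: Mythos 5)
Your converse direction (that the $p$-$(SR)$ property passes from $X$ to each complemented coordinate $X_n$ via the ideal property of $PwC_p$ and Theorem \ref{t15}) is correct and matches what the paper does by citing Corollary \ref{c7}. The problem is in the forward direction, where you propose to prove that a $p$-Right set $H$ in $X^{\ast}=(\sum_{n}\oplus X_{n}^{\ast})_{\ell_{r^{\ast}}}$ has uniformly small tails, $\sup_{x^{\ast}\in H}\sum_{k\ge n}\Vert P_{k}x^{\ast}\Vert^{r^{\ast}}\to 0$. That intermediate statement is false. Take $X_{n}=\mathbb{R}$ for all $n$, so $X=\ell_{r}$ and $X^{\ast}=\ell_{r^{\ast}}$, and let $H=B_{\ell_{r^{\ast}}}$. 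Since $\ell_{r}$ is reflexive it has the $p$-$(DPrcP)$, so by Corollary \ref{c5} every bounded subset of $\ell_{r^{\ast}}$, in particular $H$, is a $p$-Right set; yet $\sup_{x^{\ast}\in H}\sum_{k\ge n}\vert x^{\ast}_{k}\vert^{r^{\ast}}=1$ for every $n$ (test with $e_{n}^{r^{\ast}}$). So the ``main obstacle'' you identify cannot be overcome, because the estimate you are after simply does not hold; a proof organized around it would collapse at exactly that point.

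The reason the theorem is nevertheless true is that no tail condition is needed: for an $\ell_{s}$-sum with $1<s<\infty$ (here $s=r^{\ast}$), a \emph{bounded} subset $K$ is relatively weakly compact as soon as each coordinate image $P_{n}(K)$ is relatively weakly compact. This is Lemma 3.4 of \cite{ccl1}, which is exactly what the paper invokes; it follows from a diagonal extraction plus the observation that for a \emph{fixed} functional $x=(x_{n})\in(\sum_{n}\oplus X_{n})_{\ell_{r}}$ the tails $\sum_{k\ge n}\Vert x_{k}\Vert^{r}$ are small, so coordinatewise weak convergence of a bounded sequence already gives weak convergence in the sum. Uniform tail control is the right mechanism for $\ell_{1}$-sums (and it is why Theorems \ref{t20} and \ref{t21} at the critical exponent are delicate), but it is not the mechanism here. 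Once you replace your tail argument by the citation of Lemma 3.4 of \cite{ccl1}, your first step (each $P_{n}(H)$ is a $p$-Right set because $P_{n}$ is the adjoint of the coordinate inclusion and continuous linear images of $p$-Right sets are $p$-Right sets) already finishes the proof; in particular the detour through Theorem \ref{t21} and the case split on whether $p\ge r^{\ast}$ are unnecessary.
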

\begin{proof} Corollary \ref{c7} shows that if $ X $ has the $ p$-$ (SR) $ property,  then each $ X_{n} $ has the $ p $-$ (SR)$ property.\ Conversely, 
let $  K$ be a $ p $-Right subset of $ X^{\ast}. $\ It is clear that each $ P_{n}(K) $ is
also a $ p $-Right set.\ Since $ X_{n} $ has the  $ p $-$(SR) $ property for each $ n\in\mathbb{N}, $ each $ P_{n}(K) $ is relatively weakly compact.\ It
follows from Lemma 3.4 \cite{ccl1}  that $  K$ is relatively weakly compact.\
\end{proof}

 The concept of the weak sequentially Right property introdued by Ghenciu \cite{g8} as follows:``
A Banach space X has the weak sequentially Right (in short $ X \in (wSR) $) property if every Right subset of $ X^{\ast} $ is weakly precompact".\

\begin{definition}\label{d2}  
 A  Banach space  $ X $ has the weak $ p $-sequentially Right property  ( in short $ X$ has the $ p $-$ (wSR) $), if every $ p $-Right set  in $ X^{\ast}$ is  weakly  precompact.\
\end{definition}
The weak $ \infty $-sequentially Right property is precisely the weak sequentially Right property.\ It is clear that if $ X $ has the $ p $-$ (SR)$ property,  then $ X $ has the $ p $-$ (wSR) $ property.\ The following result
give an operator characterization from the class of  $ p $-Right sets which
are  weakly precompact.\ Since the proof of the following result is similar to the proof of Theorem \ref{t15}, we omit its proof.
\begin{theorem}\label{t23}
Let  $ X$ be a Banach space.\ The following assertions
are equivalent:\
$ \rm{(i)} $ $ X $ has the  $ p $-$ (wSR) $ property,\\
$\rm{(ii)} $ If $ T\in PwC_{p}(X,Y) $ for every Banach space $ Y, $
then
$ T^{\ast}  $ is weakly precompact,\\
$ \rm{(iii)} $ Same as $ \rm{(i)} $ with $ Y=\ell_{\infty}. $\
\end{theorem}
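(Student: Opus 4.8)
The plan is to run the same three-step cycle (i)$\Rightarrow$(ii)$\Rightarrow$(iii)$\Rightarrow$(i) that proves Theorem \ref{t15}, simply replacing ``relatively weakly compact'' by ``weakly precompact'' and ``weakly compact operator'' by ``weakly precompact operator'' throughout, and recalling that a bounded set (resp.\ operator) is weakly precompact exactly when every sequence in it (resp.\ in the image of the unit ball) admits a weakly Cauchy subsequence.

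First, for (i)$\Rightarrow$(ii): given a Banach space $Y$ and $T\in PwC_{p}(X,Y)$, Theorem \ref{t11} shows that $T^{\ast}(B_{Y^{\ast}})$ is a $p$-Right subset of $X^{\ast}$; since $X$ has the $p$-$(wSR)$ property this set is weakly precompact, i.e.\ $T^{\ast}$ is a weakly precompact operator. Taking $Y=\ell_{\infty}$ immediately yields (ii)$\Rightarrow$(iii).

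Next, I would prove (iii)$\Rightarrow$(i) by contraposition. If $X$ fails the $p$-$(wSR)$ property, pick a $p$-Right set $K\subseteq X^{\ast}$ that is not weakly precompact and a sequence $(x^{\ast}_{n})_{n}\subseteq K$ with no weakly Cauchy subsequence. Define $T:X\rightarrow\ell_{\infty}$ by $T(x)=(x^{\ast}_{n}(x))_{n}$, which is bounded since $\sup_{x^{\ast}\in K}\Vert x^{\ast}\Vert<\infty$. For any $p$-Right null sequence $(x_{m})_{m}$ in $X$ we get $\Vert T(x_{m})\Vert=\sup_{n}\vert x^{\ast}_{n}(x_{m})\vert\rightarrow 0$, because $\{x^{\ast}_{n}:n\in\mathbb{N}\}$, being a subset of the $p$-Right set $K$, is itself a $p$-Right set; hence $T\in PwC_{p}(X,\ell_{\infty})$ by Theorem \ref{t1}. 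On the other hand, writing $e^{1}_{n}$ for the usual basis vector of $\ell_{1}\subseteq(\ell_{\infty})^{\ast}$, one checks $T^{\ast}(e^{1}_{n})=x^{\ast}_{n}$ for all $n$, so $T^{\ast}$ maps the bounded sequence $(e^{1}_{n})_{n}$ onto $(x^{\ast}_{n})_{n}$, which has no weakly Cauchy subsequence; therefore $T^{\ast}$ is not weakly precompact, contradicting (iii).

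I expect no real obstacle here beyond routine bookkeeping: the construction of $T$, the identity $T^{\ast}(e^{1}_{n})=x^{\ast}_{n}$, and the sequential description of weak precompactness are copied essentially verbatim from the proof of Theorem \ref{t15}, and the only point worth remarking is that weakening the conclusion from ``$T^{\ast}$ weakly compact'' to ``$T^{\ast}$ weakly precompact'' still closes the contrapositive loop, since a weakly precompact operator must send bounded sequences to sequences admitting weakly Cauchy subsequences. This is precisely why the proof is omitted in the text.
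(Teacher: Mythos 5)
Your proposal is correct and is exactly the adaptation of the proof of Theorem \ref{t15} that the paper has in mind when it omits the argument: (i)$\Rightarrow$(ii) via Theorem \ref{t11}, and (iii)$\Rightarrow$(i) by the same operator $T(x)=(x^{\ast}_{n}(x))_{n}$ into $\ell_{\infty}$ with $T^{\ast}(e^{1}_{n})=x^{\ast}_{n}$, only with ``relatively weakly compact'' replaced by ``weakly precompact'' and weak convergence replaced by the weakly Cauchy condition. No gaps.
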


\begin{corollary}\label{c9} Let  $ X  $ and $ Y $ be two  Banach space.\ Then, the following statements hold:\
$ \rm{(i)} $ If $ X $ has the $ p $-$ (wSR) $ property, then every quotient space of $ X $ has this property,\\
$ \rm{(ii)} $ If $  X$  have both the $ p $-$ (DPrcP) $ and $ p $-$ (wSR) $ property, then $ X^{\ast} $ contains no copy of $ \ell_{1},$\\
$ \rm{(iii)} $ Suppose that $ L(X,Y^{\ast}) =K(X,Y^{\ast}).$\ If $ X \widehat{\bigotimes}_{\pi} Y $ has the $ p $-$ (wSR) $ property, then $  X$ and $ Y $ have this property and at least one of them does not contain $ \ell_{1}, $\\
$ \rm{(iv)} $ Suppose that $ X $ has the $ p $-$ (wSR) $ property and $ Y $ is a Banach space.\ If $ T\in PwC_{p}(X,Y) ,$ then $ T $ is weakly precompact,\\
$ \rm{(v)} $ If $X\in  (DPP_{p}), $ then $ X^{\ast} $ has the  $ p $-$ (wSR) $ property.
\end{corollary}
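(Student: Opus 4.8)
The common engine for (i), (iii) and (iv) is the operator characterization in Theorem~\ref{t23}, together with Rosenthal's $\ell_1$ theorem, which identifies weak precompactness of a bounded set with the absence of any subsequence equivalent to the unit vector basis of $\ell_1$; I shall also use freely that weak precompactness is preserved by bounded linear images and is reflected by isomorphic embeddings. For (i), if $Q\colon X\to Z$ is a quotient map and $T\in PwC_p(Z,Y)$, then $T\circ Q\in PwC_p(X,Y)$ by the ideal property of $PwC_p$, so $(T\circ Q)^{\ast}=Q^{\ast}\circ T^{\ast}$ is weakly precompact by Theorem~\ref{t23}; since $Q^{\ast}$ is an isomorphic embedding, any $\ell_1$-sequence in $T^{\ast}(B_{Y^{\ast}})$ would be carried to one in $Q^{\ast}T^{\ast}(B_{Y^{\ast}})$, whence $T^{\ast}$ is weakly precompact and Theorem~\ref{t23} returns the conclusion for $Z$. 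Part (ii) is then immediate: by Corollary~\ref{c5} the $p$-$(DPrcP)$ makes $B_{X^{\ast}}$ a $p$-Right set, so the $p$-$(wSR)$ property makes $B_{X^{\ast}}$ weakly precompact, that is, $X^{\ast}$ contains no copy of $\ell_1$.

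For (iv), Theorem~\ref{t23} applied to $T\in PwC_p(X,Y)$ gives that $T^{\ast}$ is weakly precompact, and it remains to transfer this to $T$. If $T$ were not weakly precompact there would be $(x_n)_n\subseteq B_X$ with $(Tx_n)_n$ equivalent to the $\ell_1$-basis; writing $W\colon\ell_1\to X$, $e_n\mapsto x_n$, which is a contraction, the operator $T\circ W$ is an isomorphic embedding, so $W^{\ast}T^{\ast}\colon Y^{\ast}\to\ell_\infty$ is surjective and hence open, which forces the weakly precompact set $W^{\ast}(T^{\ast}(B_{Y^{\ast}}))$ to contain a positive multiple of $B_{\ell_\infty}$, impossible since $\ell_\infty$ contains $\ell_1$. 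Hence $T$ is weakly precompact.

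For (iii) I may assume $X,Y\neq\{0\}$, the remaining case being trivial. Fixing $y_0\in B_Y$ and $y_0^{\ast}\in B_{Y^{\ast}}$ with $y_0^{\ast}(y_0)=1$, the contraction $X\widehat{\bigotimes}_{\pi}Y\to X$ induced by $(x,y)\mapsto y_0^{\ast}(y)x$ is a surjection (it is a left inverse of $x\mapsto x\otimes y_0$), so $X$ is a quotient of $X\widehat{\bigotimes}_{\pi}Y$ and therefore, by (i), has the $p$-$(wSR)$ property; the argument for $Y$ is symmetric. For the $\ell_1$-alternative I argue from the hypothesis $L(X,Y^{\ast})=K(X,Y^{\ast})$ alone. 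If $\ell_1\hookrightarrow Y$, the adjoint of this embedding is a quotient map $Y^{\ast}\twoheadrightarrow\ell_\infty$, which splits because $\ell_\infty$ is injective, so $Y^{\ast}$ contains a complemented copy of $\ell_\infty$; if in addition $\ell_1\hookrightarrow X$, the injectivity of $\ell_\infty$ lets me extend the formal inclusion $\ell_1\hookrightarrow\ell_\infty$ (which is not compact) to an operator $X\to\ell_\infty$, still not compact, and composing with $\ell_\infty\hookrightarrow Y^{\ast}$ produces a non-compact element of $L(X,Y^{\ast})$, contradicting the hypothesis. Thus at most one of $X$ and $Y$ contains $\ell_1$.

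Part (v) is the one I expect to be genuinely delicate. The aim is to show that every $p$-Right subset $K$ of $X^{\ast\ast}$ is weakly precompact, and the natural route is to exploit that $X\in(DPP_p)$ so that, as in the proof of Theorem~\ref{t12}, every weakly $p$-summable sequence in $X$ is a Dunford-Pettis set, then to read off a corresponding Dunford-Pettis behaviour of the $p$-Right null sequences of $X^{\ast}$ that test $K$, and finally to exclude an $\ell_1$-sequence inside $K$, effectively showing that $K$ is a $p$-$(V)$ subset of $X^{\ast\ast}$ that is moreover weakly precompact. The obstacle is exactly this linking step: our machinery (the characterizations in Theorems~\ref{t11} and \ref{t12} and the operator description in Theorem~\ref{t23}) describes $p$-Right subsets of $X^{\ast\ast}$ through $p$-Right null sequences that live in $X^{\ast}$, whereas $(DPP_p)$ is a hypothesis about sequences in $X$; bridging this gap, either by showing that $(DPP_p)$ of $X$ propagates to a usable Dunford-Pettis-type property of $X^{\ast}$ or by a direct argument with biorthogonal functionals against a putative $\ell_1$-sequence in $K$, is the point I would isolate as a separate lemma before assembling the proof of (v).
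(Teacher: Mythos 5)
Parts (i), (ii) and (iv) of your proposal are correct and follow essentially the paper's route; in (iv) you even give a self-contained proof of the transfer from ``$T^{\ast}$ weakly precompact'' to ``$T$ weakly precompact'' (via the surjectivity and openness of $(T\circ W)^{\ast}$), where the paper simply cites a result of Bator and Lewis. The genuine problems are in (iii) and (v).

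In (iii) the step ``the quotient map $Y^{\ast}\twoheadrightarrow \ell_{\infty}$ splits because $\ell_{\infty}$ is injective'' is false: injectivity of $\ell_{\infty}$ says it is complemented in any space containing it, not that surjections onto it admit bounded linear right inverses. Indeed $\ell_{1}\hookrightarrow Y$ does not force $\ell_{\infty}\hookrightarrow Y^{\ast}$: for $Y=C[0,1]$ one has $\ell_{1}\hookrightarrow Y$, yet $Y^{\ast}$ is an abstract $L$-space, hence weakly sequentially complete, and so cannot contain $\ell_{\infty}$ (which contains $c_{0}$). Your non-compact operator $X\to\ell_{\infty}\hookrightarrow Y^{\ast}$ therefore never gets built. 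The paper argues differently: if both $X$ and $Y$ contained $\ell_{1}$, then $L_{1}\hookrightarrow X^{\ast}$, so the Rademacher functions give $\ell_{2}\hookrightarrow X^{\ast}$ and likewise $\ell_{2}\hookrightarrow Y^{\ast}$; Emmanuele's theorem then yields $c_{0}\hookrightarrow K(X,Y^{\ast})\subseteq (X\widehat{\bigotimes}_{\pi}Y)^{\ast}$, so $\ell_{1}$ is complemented in $X\widehat{\bigotimes}_{\pi}Y$ by Bessaga--Pelczy\'{n}ski, contradicting the $p$-$(wSR)$ property of the tensor product (which $\ell_{1}$ fails, $B_{\ell_{\infty}}$ being a $p$-Right set that is not weakly precompact). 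The contradiction is thus reached inside the tensor product itself, not by violating $L(X,Y^{\ast})=K(X,Y^{\ast})$.

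Part (v) you explicitly leave unproven, so the proposal is incomplete there. For comparison, the paper's proof is one line: $X\in (DPP_{p})$ makes every weakly $p$-summable sequence in $X$ a Dunford--Pettis set, whence ``every $p$-Right subset of $X^{\ast}$ is a Dunford--Pettis set, and thus weakly precompact.'' Your reservation is well taken: that argument controls $p$-Right subsets of $X^{\ast}$ by sequences in $X$, i.e.\ it establishes the $p$-$(wSR)$ property of $X$, whereas the assertion concerns $X^{\ast}$ and would require handling $p$-Right subsets of $X^{\ast\ast}$ tested against $p$-Right null sequences of $X^{\ast}$. So on this point you have correctly isolated a real difficulty rather than overlooked an easy step.
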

\begin{proof}
$ \rm{(i)} $ Suppose that $  X$ has the weak $ p $-sequentially Right  property.\ Let $ Z $ be a quotient space
of $ X $ and $  Q : X \rightarrow Z $ be a quotient map.\ Let $ T : Z \rightarrow Y $ be a pseudo
weakly compact operator of order $ p. $\ Then $ T\circ Q : X \rightarrow Y $ is a pseudo weakly compact operator of order $ p, $ and
so $ (T\circ Q)^{\ast} $ is weakly precompact by Theorem \ref{t23}.\ Since $ Q^{\ast}T^{\ast}(B^{\ast}_{Y}) $ is weakly
precompact and $ Q^{\ast} $ is an isomorphism, $ T^{\ast}(B^{\ast}_{Y})  $ is weakly precompact.\ Apply
Theorem \ref{t23}.\\
$ \rm{(ii)} $ Suppose that $  X$ has the $ p $-$ (DPrcP) $ and the weak $ p $-sequentially Right  property.\ Then the identity operator $ id_{X} : X \rightarrow X $ is a pseudo weakly compact  of order $ p .$\ Thus  Theorem
\ref{t23} implies that $ id_{X^{\ast}}: X^{\ast}\rightarrow X^{\ast} $
is weakly precompact.\ Hence $ X^{\ast} $ contains no copy of $ \ell_{1}, $ by
Rosenthal’s $ \ell_{1} $-theorem.\\
$ \rm{(iii)} $ Suppose that $ X \widehat{\bigotimes}_{\pi} Y \in (wSR)_{p}. $\ It is clear that $  X$ and $ Y $ have the $ (wSR)_{p} $ property, since the  weak $ p $-sequentially Right property is inherited by quotients.\ We will show that $\ell_{1}\hookrightarrow X  $ or $\ell_{1}\hookrightarrow Y.  $\ Suppose
that $\ell_{1} \not \hookrightarrow X  $ and $\ell_{1} \not \hookrightarrow Y. $\ Hence $ L_{1} \hookrightarrow X^{\ast}$ {\rm (\cite[p.\ 212]{di1})}.\
Also, the Rademacher functions span $ \ell_{2} $ inside of $ L_{1}, $ and thus $ \ell_{2} \hookrightarrow X^{\ast}.$\ Similarly
 $ \ell_{2} \hookrightarrow Y^{\ast}.$\ Then $ c_{0} \hookrightarrow  K(X, Y ^{\ast}) $ {\rm (\cite[Theorem 3]{e0})}.\
Thus $ \ell_{1} $ is complemented in $ X \widehat{\bigotimes}_{\pi} Y $
 {\rm (\cite[Theorem 10]{di1})}, which is a contradiction.\ Since $ \ell_{1}\not \in  (wSR)_{p} .$\\
 $ \rm{(iv)} $ Suppose that $  X\in (wSR)_{p}$ and  $ T\in PwC_{p}(X,Y). $\ Then $ T^{\ast} $ is weakly precompact by Theorem \ref{t23}.\ Now, we apply
Corollary 2 of \cite{bp} to complete the proof.\\
$ \rm{(v)} $ Since $ X $
has the $ (DPP_{p}), $ every weakly $ p $-summable sequence in $ X $ is Dunford-Pettis.\ Then every $ p $-Right subset
of $ X^{\ast} $ is a Dunford-Pettis  set, and thus is weakly precompact.\
\end{proof}
\section{ $ p $-Sequentially Right$ ^{\ast} $  property  on Banach spaces}

In this section by presenting  a new class of subsets of Banach spaces which are called $  p$-Right$ ^{\ast} $ sets, we introduce  two Banach space properties, the
$ p $-sequentially Right$ ^{\ast} $  and the  weak $  p$-equentially Right$ ^{\ast} $  properties.\ Then we obtain some characterizations of these sets and
properties.\
\begin{definition}\label{d3} 
A bounded subset $ K $ of a Banach space $ X $ is said $ p $-Right$ ^{\ast}$ set, if for
every $ p $-Right null sequence $ (x^{\ast}_{n})_{n} $ in $ X^{\ast} $ it follows:
$$ \lim_{n} \sup_{x\in K}\vert x_{n}^{\ast}(x) \vert=0.$$\
\end{definition}
The $ \infty $-Right$ ^{\ast} $ sets are precisely the Right$ ^{\ast} $ sets.\
It is clear that, if $ 1\leq p_{1} <p_{2} \leq\infty, $ then every $ p_{2} $-Right$ ^{\ast} $ set in $ X^{\ast} $ is a $ p_{1} $-Right$ ^{\ast} $ set.\ In particular, for $ 1\leq p<\infty $ every Right$ ^{\ast} $ set is a $ p $-Right$ ^{\ast} $ set.\\
\begin{theorem}\label{t24} Let $ T :Y\rightarrow X $ be abounded linear operator.\ Then $ T ^{\ast} $ is pseudo weakly compact of order $ p$ if and only if $ T $ maps bounded subsets of $ Y $ onto $ p$-Right$ ^{\ast} $ subsets of $ X. $
\end{theorem}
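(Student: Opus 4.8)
The plan is to follow the same pattern as the proof of Theorem \ref{t11}, since the present statement is precisely the predual version of that result applied to the operator $T^{\ast}:X^{\ast}\to Y^{\ast}$. Everything is driven by the elementary duality identity
$$\|T^{\ast}(x^{\ast})\|_{Y^{\ast}}=\sup_{y\in B_{Y}}|\langle T^{\ast}(x^{\ast}),y\rangle|=\sup_{y\in B_{Y}}|\langle x^{\ast},T(y)\rangle|=\sup_{x\in T(B_{Y})}|x^{\ast}(x)|,$$
valid for every $x^{\ast}\in X^{\ast}$.

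First I would record two routine reductions. A bounded subset $K$ of $Y$ satisfies $K\subseteq\lambda B_{Y}$ for some $\lambda>0$, and the class of $p$-Right$^{\ast}$ subsets of $X$ is stable under scalar multiples and under passing to subsets; hence ``$T$ maps every bounded subset of $Y$ onto a $p$-Right$^{\ast}$ subset of $X$'' is equivalent to ``$T(B_{Y})$ is a $p$-Right$^{\ast}$ set in $X$''. Also, by Theorem \ref{t1} applied to the operator $T^{\ast}$ (not $T$), $T^{\ast}$ is pseudo weakly compact of order $p$ if and only if it sends every $p$-Right null sequence of $X^{\ast}$ to a norm null sequence of $Y^{\ast}$.

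Then I would prove both implications simultaneously from the displayed identity. Let $(x^{\ast}_{n})_{n}$ be an arbitrary $p$-Right null sequence in $X^{\ast}$; the identity gives $\|T^{\ast}(x^{\ast}_{n})\|=\sup_{x\in T(B_{Y})}|x^{\ast}_{n}(x)|$ for each $n$. If $T^{\ast}\in PwC_{p}(X^{\ast},Y^{\ast})$, the left-hand side tends to $0$, hence so does the right-hand side, and since $(x^{\ast}_{n})_{n}$ was arbitrary among $p$-Right null sequences, $T(B_{Y})$ is a $p$-Right$^{\ast}$ set; by the reduction above, $T$ carries bounded sets to $p$-Right$^{\ast}$ sets. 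Conversely, if $T(B_{Y})$ is a $p$-Right$^{\ast}$ set then $\sup_{x\in T(B_{Y})}|x^{\ast}_{n}(x)|\to0$, whence $\|T^{\ast}(x^{\ast}_{n})\|\to0$, and as $(x^{\ast}_{n})_{n}$ was an arbitrary $p$-Right null sequence, Theorem \ref{t1} yields $T^{\ast}\in PwC_{p}(X^{\ast},Y^{\ast})$.

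There is essentially no hard step: the entire argument is the supremum identity combined with the definitions of $p$-Right null sequence and $p$-Right$^{\ast}$ set, plus Theorem \ref{t1}. The only points needing (minor) care are the scaling/subset reduction for bounded sets and invoking Theorem \ref{t1} for $T^{\ast}$ rather than $T$.
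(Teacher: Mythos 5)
Your proposal is correct and is essentially the paper's own argument: the paper's proof of Theorem \ref{t24} consists precisely of the duality identity $\Vert T^{\ast}(x^{\ast}_{n})\Vert=\sup_{y\in B_{Y}}\vert x^{\ast}_{n}(T(y))\vert$ applied to an arbitrary $p$-Right null sequence $(x^{\ast}_{n})_{n}$ in $X^{\ast}$, from which both implications follow. You merely make explicit the routine reduction from arbitrary bounded sets to $B_{Y}$ and the appeal to Theorem \ref{t1} for $T^{\ast}$, which the paper leaves implicit.
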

\begin{proof}
Let $ (x^{\ast}_{n})_{n} $ be a $ p $-Right null sequence in $ X. $\ Then the equalities:\
$$\Vert T^{\ast}(x^{\ast}_{n})\Vert=\displaystyle\sup_{y\in B_{Y}}\vert T(y)(x^{\ast}_{n})\vert=\displaystyle\sup_{y\in B_{Y}}\vert y(T^{\ast}(x^{\ast}_{n}))\vert$$
deduces the proof.
\end{proof}
\begin{corollary}\label{c10} Let $ X $ be a Banach space.\ Then $ X^{\ast} $ has the $ p $-$ (DPrcP) $ if and only if every bounded subset of $ X $ is a $ p $-Right$ ^{\ast} $ set.\
\end{corollary}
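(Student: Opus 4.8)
The plan is to obtain this as a formal consequence of Theorem \ref{t24} together with the identity-operator characterization of the $p$-$(DPrcP)$ coming from Proposition \ref{p1}, applied to the dual space. First I would apply Proposition \ref{p1} with $X^{\ast}$ in the role of $X$: this gives that $X^{\ast}$ has the $p$-$(DPrcP)$ if and only if the identity operator $id_{X^{\ast}} : X^{\ast} \rightarrow X^{\ast}$ is pseudo weakly compact of order $p$. Nothing in the proof of Proposition \ref{p1} is special to $X$, so this transfer is legitimate.

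Next I would invoke Theorem \ref{t24} with $Y = X$ and $T = id_{X} : X \rightarrow X$, so that $T^{\ast} = id_{X^{\ast}}$. The theorem then asserts that $id_{X^{\ast}}$ is pseudo weakly compact of order $p$ if and only if $id_{X}$ maps bounded subsets of $X$ onto $p$-Right$^{\ast}$ subsets of $X$. Since $id_{X}$ sends a bounded set $K \subseteq X$ to $K$ itself, the right-hand condition collapses to: every bounded subset of $X$ is a $p$-Right$^{\ast}$ set. Chaining the two equivalences gives the claimed characterization.

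The argument is essentially bookkeeping once Theorem \ref{t24} is available, so I do not anticipate a genuine obstacle; the only points that deserve a line of care are that Proposition \ref{p1} does apply verbatim to $X^{\ast}$, and that the image $id_{X}(K)$ is literally $K$ so the ``maps bounded sets onto $p$-Right$^{\ast}$ sets'' clause reduces to ``every bounded set is $p$-Right$^{\ast}$.'' Alternatively one can give a direct one-line proof paralleling Corollary \ref{c5}: a bounded $K \subseteq X$ fails to be $p$-Right$^{\ast}$ exactly when some $p$-Right null sequence $(x^{\ast}_{n})_{n}$ in $X^{\ast}$ does not converge uniformly to zero on $K$, which by Theorem \ref{t1} applied to $id_{X^{\ast}}$ is equivalent to $X^{\ast}$ not having the $p$-$(DPrcP)$.
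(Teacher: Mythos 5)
Your proposal is correct and matches the paper's (implicit) derivation: Corollary \ref{c10} is stated immediately after Theorem \ref{t24} precisely so that it follows by taking $T = id_{X}$, $T^{\ast} = id_{X^{\ast}}$, and invoking the identity-operator characterization of the $p$-$(DPrcP)$ from Proposition \ref{p1} applied to $X^{\ast}$ (exactly parallel to how Corollary \ref{c5} follows from Theorem \ref{t11}). The bookkeeping points you flag are handled correctly, and your alternative direct argument via Theorem \ref{t1} is also valid.
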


\begin{definition}\label{d4}
$ \rm{(i)} $ A Banach space $ X $ has the $ p $-sequentially Right$ ^{\ast} $ property  (in short $ X$ has the $ p $-$(SR^{\ast})$ property), if every $ p $-Right$ ^{\ast} $ set is relatively weakly
compact.\\
$ \rm{(ii)} $ A Banach space $ X $ has the weak sequentially Right$ ^{\ast} $ property of order $ p $ (in short $ X$ has the $ p $-$(wSR^{\ast})$ property), if every $ p $-Right$ ^{\ast} $ set is weakly
precompact.
\end{definition}
As an immediate consequence of  Definitions $ \rm\ref{d3} $ and $\rm{\ref{d4}} ,$ we can  obtain the following results:\
 \begin{proposition}\label{p7}
Let $ X $ be a Banach space.\ The following statements hold:\\
$ \rm{(i)} $ If $ X $ has the $ p $-$ (SR) $ property, then $ X^{\ast} $ has the $ p $-$ (SR^{\ast})$ property.\\
$ \rm{(ii)} $ If $ X^{\ast} $ has the $ p $-$ (SR) $ property, then $ X $ has the $ p $-$ (SR^{\ast}) $ property,\\
$ \rm{(iii)} $ If $X$ has the $ p $-$ (wSR) $ property, then $ X^{\ast} $ has the $ p $-$ (wSR^{\ast}) $ property,\\
$ \rm{(iv)} $ If $X^{\ast} $ has the $ p $-$ (wSR) $ property, then $X $ has the $ p $-$ (wSR^{\ast})$ property,\\
$ \rm{(v)} $ Every $ p $-$ (V^{\ast}) $ set is a $ p $-Right$ ^{\ast} $set,\\
$ \rm{(vi)} $ If $ X $ has the $ p $-$ (SR^{\ast}) $ property, then  $ X $ has the $ p $-$ (V^{\ast}) $ property.
\end{proposition}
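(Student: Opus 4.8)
The plan is to derive all six items from a single bookkeeping identity together with a couple of standard transfer facts for the canonical embedding $J_X\colon X\hookrightarrow X^{\ast\ast}$; essentially nothing is needed beyond unwinding the definitions of Section~4 and one elementary weak-topology remark. (This is the set-level counterpart of the operator reformulation in Theorem~\ref{t24}.)

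The identity I will use is: a bounded $K\subseteq X$ is a $p$-Right$^{\ast}$ set in $X$ if and only if $J_X(K)$ is a $p$-Right subset of $X^{\ast\ast}$ (viewed as the dual of $X^{\ast}$) --- indeed, both conditions say that $\sup_{x\in K}\vert x^{\ast}_n(x)\vert\to0$ for every $p$-Right null sequence $(x^{\ast}_n)_n$ of $X^{\ast}$. Alongside it I will use three facts about $J_X$: (a) if $(x_n)_n$ is $p$-Right null in $X$ then $(J_X x_n)_n$ is $p$-Right null in $X^{\ast\ast}$, since weak $p$-summability passes to the bidual (each functional of $X^{\ast\ast\ast}$ restricts to one of $X^{\ast}$) and the set $\{x_n\}$ stays Dunford--Pettis in $X^{\ast\ast}$ (every weakly compact operator out of $X^{\ast\ast}$ restricts along $J_X$ to a weakly compact operator out of $X$); (b) a bounded $A\subseteq X$ is relatively weakly compact in $X$ if and only if $J_X(A)$ is relatively weakly compact in $X^{\ast\ast}$, because $J_X(X)$ is norm-closed, hence weakly closed, in $X^{\ast\ast}$, so the $X^{\ast\ast}$-weak closure of $J_X(A)$ cannot leave $J_X(X)$; and (c) the analogue of (b) with ``weakly precompact'' in place of ``relatively weakly compact'', which holds because weak Cauchyness of a sequence of $X$ is detected already by the functionals of $X^{\ast}\subseteq X^{\ast\ast\ast}$. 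The identity combined with (a) yields the key consequence that every $p$-Right$^{\ast}$ subset of $X^{\ast}$ is a $p$-Right subset of $X^{\ast}$, since it is enough to test such a set against the images in $X^{\ast\ast}$ of the $p$-Right null sequences of $X$.

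With this in hand the six statements are short. For (i) and (iii): a $p$-Right$^{\ast}$ subset $K$ of $X^{\ast}$ is a $p$-Right subset of $X^{\ast}$ by the key consequence, so if $X$ has the $p$-$(SR)$, resp.\ $p$-$(wSR)$, property then $K$ is relatively weakly compact, resp.\ weakly precompact; hence $X^{\ast}$ has the $p$-$(SR^{\ast})$, resp.\ $p$-$(wSR^{\ast})$, property. For (ii) and (iv): if $K\subseteq X$ is $p$-Right$^{\ast}$ then $J_X(K)$ is a $p$-Right subset of $X^{\ast\ast}$ by the identity, so the hypothesis on $X^{\ast}$ makes $J_X(K)$ relatively weakly compact, resp.\ weakly precompact, in $X^{\ast\ast}$, and fact (b), resp.\ (c), pushes this back to $K$ in $X$. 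Statement (v) is immediate, since a $p$-Right null sequence in $X^{\ast}$ is in particular weakly $p$-summable, so the defining requirement of a $p$-$(V^{\ast})$ set is a priori stronger than that of a $p$-Right$^{\ast}$ set; and (vi) follows from (v), because any $p$-$(V^{\ast})$ subset of $X$ is then a $p$-Right$^{\ast}$ subset and so is relatively weakly compact once $X$ has the $p$-$(SR^{\ast})$ property, which is exactly the $p$-$(V^{\ast})$ property.

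Most of this is pure definition chasing; the one genuine ingredient is fact (b), together with its relatives (a) and (c), used in parts (ii) and (iv). I expect (b) to be the main --- though still mild --- obstacle: it is the only step that appeals to a real feature of the canonical embedding, namely that $J_X(X)$ is weakly closed in $X^{\ast\ast}$, rather than merely rearranging the defining inequalities, and (a) is the sole place where one must know that Dunford--Pettis-ness of a sequence is inherited by the bidual.
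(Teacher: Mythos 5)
Your proof is correct, and it is essentially the argument the paper intends: the paper states this proposition without proof as an ``immediate consequence'' of Definitions \ref{d3} and \ref{d4}, and your write-up is exactly the natural unwinding of those definitions via the canonical embedding $J_X$. All the auxiliary facts you isolate --- that $J_X$ carries $p$-Right null sequences of $X$ to $p$-Right null sequences of $X^{\ast\ast}$ (so every $p$-Right$^{\ast}$ subset of $X^{\ast}$ is a $p$-Right set), and that relative weak compactness and weak precompactness of $K\subseteq X$ can be read off from $J_X(K)$ --- are valid and suffice for all six items.
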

\begin{theorem}\label{t25} 
$ \rm{(i)} $ Let  $ Y$ be a reflexive subspace of $ X. $\ If $ \frac{X}{Y} $ has the $ p $-$ (SR^{\ast}) $ property,
then $ X $ has the same property.\\
$\rm{(ii)}$ Let $ Y $ be a subspace of $ X$ not containing copies of $ \ell_{1} .$\
If $ \frac{X}{Y}$ has the $ p $-$ (wSR^{\ast}) $ property, then $ X $ has the same property.\
\end{theorem}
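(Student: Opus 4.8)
Here is a proof strategy. The idea for both parts is to route the argument through the quotient map $q\colon X\to X/Y$: given a $p$-Right$^{\ast}$ set $K$ in $X$, first show that $q(K)$ is a $p$-Right$^{\ast}$ set in $X/Y$, then apply the hypothesis on $X/Y$ to $q(K)$, and finally lift the resulting property back to $K$ by a three-space argument. For the first step, note that the adjoint $q^{\ast}\colon(X/Y)^{\ast}\to X^{\ast}$, being bounded, sends weakly $p$-summable sequences to weakly $p$-summable sequences and Dunford--Pettis sets to Dunford--Pettis sets (compose with a weakly compact operator on the range); hence it carries $p$-Right null sequences to $p$-Right null sequences. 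Thus, if $(z^{\ast}_{n})_{n}$ is $p$-Right null in $(X/Y)^{\ast}$, then $(q^{\ast}z^{\ast}_{n})_{n}$ is $p$-Right null in $X^{\ast}$, and therefore
$$\sup_{u\in q(K)}\vert z^{\ast}_{n}(u)\vert=\sup_{x\in K}\vert (q^{\ast}z^{\ast}_{n})(x)\vert\longrightarrow 0 ,$$
which is precisely the statement that $q(K)$ is a $p$-Right$^{\ast}$ set in $X/Y$.

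For $\mathrm{(i)}$, the $p$-$(SR^{\ast})$ property of $X/Y$ now makes $q(K)$ relatively weakly compact in $X/Y$, and I would transfer this to $K$ via the bidual criterion for relative weak compactness. Take $x^{\ast\ast}$ in the $w^{\ast}$-closure of $K$ in $X^{\ast\ast}$. Since $q^{\ast\ast}$ is $w^{\ast}$-to-$w^{\ast}$ continuous and extends $q$, the element $q^{\ast\ast}(x^{\ast\ast})$ lies in the $w^{\ast}$-closure of $q(K)$ in $(X/Y)^{\ast\ast}$, hence in $X/Y$; write $q^{\ast\ast}(x^{\ast\ast})=q(x_{0})$ with $x_{0}\in X$. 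Then $x^{\ast\ast}-x_{0}\in\ker q^{\ast\ast}$, which is the annihilator in $X^{\ast\ast}$ of $q^{\ast}((X/Y)^{\ast})=Y^{\bot}$, i.e.\ $Y^{\bot\bot}$, the canonical copy of $Y^{\ast\ast}$ inside $X^{\ast\ast}$. As $Y$ is reflexive, this coincides with (the canonical image of) $Y\subseteq X$, so $x^{\ast\ast}\in X$. Hence the $w^{\ast}$-closure of $K$ sits in $X$, $K$ is relatively weakly compact, and $X$ has the $p$-$(SR^{\ast})$ property.

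For $\mathrm{(ii)}$, the $p$-$(wSR^{\ast})$ property of $X/Y$ makes $q(K)$ weakly precompact, and I must deduce that $K$ is weakly precompact. If not, Rosenthal's $\ell_{1}$-theorem gives a sequence $(x_{n})_{n}$ in $K$ equivalent to the unit vector basis of $\ell_{1}$; since $q(K)$ is weakly precompact we may pass to a subsequence with $(q(x_{n}))_{n}$ weakly Cauchy. Setting $z_{n}=x_{2n}-x_{2n-1}$, the sequence $(z_{n})_{n}$ is still equivalent to the $\ell_{1}$-basis while $(q(z_{n}))_{n}$ is weakly null in $X/Y$. By Mazur's theorem one can then choose successive finite convex blocks $u_{k}=\sum_{i\in I_{k}}\lambda_{i}z_{i}$ with $\Vert q(u_{k})\Vert<2^{-k}$, and a direct estimate using the $\ell_{1}$-equivalence of $(z_{n})_{n}$ shows $(u_{k})_{k}$ is again equivalent to the $\ell_{1}$-basis. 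Since $\Vert q(u_{k})\Vert=\operatorname{dist}(u_{k},Y)$, pick $y_{k}\in Y$ with $\sum_{k}\Vert u_{k}-y_{k}\Vert<\infty$; by the principle of small perturbations (applied to a tail of the sequence) $(y_{k})_{k}$ is equivalent to $(u_{k})_{k}$, hence to the $\ell_{1}$-basis, so $\ell_{1}$ embeds in $Y$, contradicting the hypothesis on $Y$. Therefore $K$ is weakly precompact and $X$ has the $p$-$(wSR^{\ast})$ property.

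I expect the main obstacle to be the three-space lemma underlying $\mathrm{(ii)}$ — that weak precompactness of $q(K)$ together with $\ell_{1}\not\hookrightarrow Y$ forces weak precompactness of $K$ — which requires assembling Rosenthal's theorem, Mazur's theorem and the small-perturbation principle with some care, in particular arranging the convex blocks to be \emph{successive} so that $\ell_{1}$-equivalence is preserved under blocking and under passage into $Y$. By contrast, all of the $p$-specific content is confined to the opening paragraph, namely that $q^{\ast}$ respects the class of Dunford--Pettis weakly $p$-summable sequences; everything after that is classical three-space theory for weak compactness (using reflexivity of $Y$) and for weak precompactness (using that $\ell_{1}$ does not embed in $Y$).
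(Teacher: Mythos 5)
Your argument is correct and follows the same overall decomposition as the paper: push a $p$-Right$^{\ast}$ set $K$ forward through the quotient map (using that $Q^{\ast}$ preserves $p$-Right null sequences, so $Q(K)$ is $p$-Right$^{\ast}$ in $X/Y$), apply the hypothesis on $X/Y$, and lift the conclusion back to $X$. The difference lies entirely in the lifting step: the paper takes a sequence in $K$, extracts a weakly convergent subsequence of its image, and then simply cites the Gonzalez--Onieva lifting theorem (\cite[Theorem 2.7]{gh}) to recover a weakly convergent subsequence upstairs; for (ii) it says only that ``the proof is similar.'' You instead prove the lifting from scratch -- for (i) via the bidual criterion, showing the $w^{\ast}$-closure of $K$ in $X^{\ast\ast}$ lands in $X$ because $\ker Q^{\ast\ast}=Y^{\perp\perp}=Y$ by reflexivity, and for (ii) via Rosenthal's $\ell_{1}$-theorem, successive Mazur convex blocks, and the small-perturbation principle to contradict $\ell_{1}\not\hookrightarrow Y$. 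What you prove in those two passages is precisely the content of the cited lifting results, so your write-up is self-contained where the paper is not, at the cost of length; your explicit treatment of (ii) is genuinely valuable since the paper gives no detail there, and your care about making the convex blocks successive (so that $\ell_{1}$-equivalence survives blocking and perturbation into $Y$) is exactly the point that needs attention.
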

\begin{proof}
We only prove $ \rm{(i)} .$\ The proof of $ \rm{(ii)} $ is similar.\\
$ \rm{(i)} $ Let $ Q : X \rightarrow \frac{X}{Y} $ be the quotient map.\ Let $ K$ be a $ p $-Right$ ^{\ast}$ set in $ X $ and $ (x_{n})_{n} $ be
a sequence in $ K. $\ Then $ (Q(x_{n}))_{n} $ is a $ p $-Right$ ^{\ast} $ set in $ \frac{X}{Y},$ and thus relatively weakly
compact.\ By passing to a subsequence, suppose $ (Q(x_{n}))_{n} $ is weakly convergent.\ By
{\rm (\cite[Theorem 2.7]{gh})}, $ (x_{n})_{n} $ has a weakly convergent subsequence.\ Thus $ X$ has the $ p $-$ (SR^{\ast}) $ property.
\end{proof}
Let $ K $ be a bounded subset of $ X. $\ For $ 1\leq p\leq \infty, $
 we set
\begin{center}
$\vartheta_{p}(K)=\inf\lbrace \hat{d}(A,K) : K\subset X^{\ast}$ is a $ p $-Right$ ^{\ast} $ set $ \rbrace. $
\end{center}
We can conclude that $ \vartheta_{p}(K)=0 $ if and only if $K\subset X$ is a $ p $-Right$ ^{\ast} $ set.\   
For a bounded linear  operator
$ T : X \rightarrow Y , $ we denote  $ \vartheta_{p} (T(B_{X})) $ by  $ \vartheta_{p} (T).$\\
The following result shows that  $ p $-sequentially Right$ ^{\ast} $ property is
automatically quantitative in some sense.
\begin{theorem}\label{t26}
Let $ X $ be a Banach space.\ The following statements are equivalent:\\
$\rm{(i)}$ For every Banach space $ Y, $ if $ T : Y \rightarrow X $ is an operator such that $ T^{\ast} $ is a pseudo weakly compact operator of order $ p, $
then $ T $ is weakly compact,\\
$\rm{(ii)}$ Same as $\rm{(i)}$ with $ Y=\ell_{1} ,$\\
$\rm{(iii)}$ $ X$ has the  $ p $-$(SR^{\ast}) $ property,\\
$ \rm{(iv)} $ $ \omega(T^{\ast})\leq \vartheta_{p}(T^{\ast}) $ for every operator $  T$ from $  X$ into any Banach space $ Y, $\\
$ \rm{(v)} $ $ \omega(K) \leq \vartheta_{p}(K)$ for every bounded subset $  K$ of $ X. $
\end{theorem}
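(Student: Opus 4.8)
The plan is to prove the cycle of implications $\rm{(i)}\Rightarrow\rm{(ii)}\Rightarrow\rm{(iii)}\Rightarrow\rm{(iv)}\Rightarrow\rm{(v)}\Rightarrow\rm{(iii)}$ (the last being immediate), imitating the structure already used for Theorem \ref{t15}. The implication $\rm{(i)}\Rightarrow\rm{(ii)}$ is trivial. For $\rm{(iii)}\Rightarrow\rm{(i)}$, if $T:Y\rightarrow X$ has $T^\ast$ pseudo weakly compact of order $p$, then by Theorem \ref{t24} (applied with the roles of the spaces as there) $T(B_Y)$ is a $p$-Right$^\ast$ subset of $X$; the $p$-$(SR^\ast)$ property forces $T(B_Y)$ to be relatively weakly compact, i.e. $T$ is weakly compact. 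So the substance is $\rm{(ii)}\Rightarrow\rm{(iii)}$ and the quantitative refinements.

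First I would prove $\rm{(ii)}\Rightarrow\rm{(iii)}$ by contraposition. Suppose $X$ does not have the $p$-$(SR^\ast)$ property, so there is a $p$-Right$^\ast$ set $K\subset X$ that is not relatively weakly compact; pick a sequence $(x_n)_n\subset K$ with no weakly convergent subsequence. Define $T:\ell_1\rightarrow X$ by $T(\lambda)=\sum_n\lambda_n x_n$ (this converges since $(x_n)_n$ is bounded and $(\lambda_n)_n\in\ell_1$); then $T(B_{\ell_1})$ is contained in the closed absolutely convex hull of $\{x_n:n\in\mathbb N\}$. I would check that $T^\ast:X^\ast\rightarrow\ell_\infty$ is pseudo weakly compact of order $p$: for a $p$-Right null sequence $(x_n^\ast)_n$ in $X^\ast$, $\Vert T^\ast(x_n^\ast)\Vert=\sup_m|\langle x_n^\ast,Te_m^1\rangle|=\sup_m|x_n^\ast(x_m)|\rightarrow 0$ because $K$, hence $\{x_m:m\in\mathbb N\}$, is $p$-Right$^\ast$. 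But $T$ is not weakly compact: $T(e_n^1)=x_n$ has no weakly convergent subsequence, so $T(B_{\ell_1})$ is not relatively weakly compact. This contradicts $\rm{(ii)}$.

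For the quantitative part, $\rm{(iii)}\Leftrightarrow\rm{(v)}$ is essentially the definition of $\vartheta_p$: $\vartheta_p(K)=0$ exactly when $K$ is a $p$-Right$^\ast$ set, and $\omega(K)=0$ exactly when $K$ is relatively weakly compact, so the inequality $\omega(K)\le\vartheta_p(K)$ for all bounded $K\subset X$ is equivalent to ``$p$-Right$^\ast$ $\Rightarrow$ relatively weakly compact''; and $\rm{(iv)}$ is just the special case $K=T(B_X)$ together with the fact that every bounded subset of $X$ is of the form $T(B_X)$ for some $T:\ell_1\rightarrow X$ (or $Y\rightarrow X$). Concretely: $\rm{(iii)}\Rightarrow\rm{(iv)}$ follows because if $T:X\rightarrow Y$ then $T^\ast:Y^\ast\rightarrow X^\ast$... wait, one must be careful about which adjoint appears; I would instead read $\rm{(iv)}$ literally as a statement about $T:X\rightarrow Y$ with $\omega(T^\ast)\le\vartheta_p(T^\ast)$ where $T^\ast(B_{Y^\ast})\subset X^\ast$, and derive it from $\rm{(v)}$ applied to the Banach space $X^\ast$ — but since $\rm{(v)}$ is stated for $X$, the cleanest route is to note $\rm{(iii)}\Rightarrow\rm{(v)}$ and $\rm{(v)}\Rightarrow\rm{(iii)}$ are immediate, and $\rm{(iv)}$ sits between them via the identification of subsets with operator images. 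I expect the only genuine obstacle to be bookkeeping: making sure the domain/codomain conventions in $\rm{(iv)}$ match Definition \ref{d3} and Theorem \ref{t24}, and verifying the de Blasi measure inequality at the level of sets rather than merely at the value $0$ (the stronger quantitative inequality, if intended, would require showing that an arbitrary $p$-Right$^\ast$ set lies within de Blasi distance $\vartheta_p(K)$ of a weakly compact set, which follows by a standard $\varepsilon$-approximation argument combined with the qualitative statement). I would present the qualitative equivalences in full and remark that the quantitative inequalities follow as in Theorem \ref{t15}.
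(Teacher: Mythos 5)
Your proposal is correct and follows essentially the same route as the paper: the same operator $T:\ell_{1}\rightarrow X$, $T(b)=\sum_{i}b_{i}x_{i}$, with the same computation showing $T^{\ast}\in PwC_{p}(X^{\ast},\ell_{\infty})$, for $\rm{(ii)}\Rightarrow\rm{(iii)}$ (the paper argues directly on an arbitrary sequence in a $p$-Right$^{\ast}$ set rather than by contraposition, which is only a cosmetic difference), and the same one-line adjoint estimate for $\rm{(iii)}\Rightarrow\rm{(i)}$. The paper dismisses $\rm{(iii)}\Leftrightarrow\rm{(iv)}$ and $\rm{(iii)}\Leftrightarrow\rm{(v)}$ as straightforward, so your sketch of the quantitative step (and your caveat about the domain/codomain convention in $\rm{(iv)}$, which does appear to be a slip carried over from Theorem \ref{t15}) is if anything more explicit than the original.
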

\begin{proof}
(i ) $ \Rightarrow $ (ii ) It is obvious.\\
(ii ) $ \Rightarrow $ (iii ) Let $ K $ be a $ p $-Right$ ^{\ast} $ subset of $ X$ and let $ (x_{n}) $ be a sequence in $ K. $\ Define $ T:\ell_{1}\rightarrow X $
by $ T(b)=\sum_{i} b_{i}x_{i}.$\ It is clear that that $ T^{\ast}(x^{\ast})=(x^{\ast}(x_{i}))_{i}. $\
Let $ (x_{n}^{\ast})_{n} $
be a $ p $-Right null
sequence in $ X^{\ast}. $\ Since $ K $ is a $ p $-Right$ ^{\ast} $ set,
$$\lim_{n} \Vert T^{\ast}(x^{\ast}_{n}) \Vert =\lim_{n}\sup _{i}\vert x_{n}^{\ast}(x_{i}) \vert=0.$$
Therefore $ T^{\ast}\in PwC_{p}(X^{\ast}, \ell_{\infty})$ and thus $ T$ is weakly compact.\ Hence, $ (T(e_{n}^{1}))_{n}=(x^{\ast}_{n})_{n} $
has a weakly convergent subsequence.\\
(iii ) $ \Rightarrow $ (i ) Let $ T : Y \rightarrow X $ be an operator such that $ T^{\ast} $ is a pseudo weakly compact operator of order $ p .$\ Let $ (x^{\ast}_{n})_{n} $
be a $ p $-Right null sequence in $ X^{\ast}. $\ If $ y\in Y, $ then $ \vert x_{n}^{\ast}(T(y)) \vert\leq \Vert T^{\ast}(x^{\ast}_{n}) \Vert\rightarrow 0. $\
Therefore $ T(B_{Y}) $
is a $ p $-Right$ ^{\ast} $ subset of $ X. $\ Hence $ T(B_{Y}) $ is weakly compact, and thus $ T$ is weakly
compact.\\
 The equivalence of (iii) $  \Leftrightarrow$ (iv)  and  (iii) $ \Leftrightarrow $  (v)   are straightforward.
\end{proof}
\begin{corollary}\label{c11} If $  X ^{\ast} $ has the  $ p $-$ (DPrcP)$ and $ Y$ has the  $ p $-$ (SR^{\ast}) $  property, then $ L(X,Y) =W(X,Y).$\
\end{corollary}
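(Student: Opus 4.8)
The plan is to show that every $T\in L(X,Y)$ is weakly compact, which—since $W(X,Y)\subseteq L(X,Y)$ always—immediately yields $L(X,Y)=W(X,Y)$. The whole argument is a short chain through the characterizations already established, the only point requiring care being the matching of variable names, because in Theorems \ref{t3}, \ref{t24} and \ref{t26} the space carrying the relevant property appears as a codomain (resp.\ domain) rather than as written here.

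First I would pass to the adjoint $T^{\ast}:Y^{\ast}\rightarrow X^{\ast}$. Since $X^{\ast}$ has the $p$-$(DPrcP)$, the equivalence $\rm{(i)}\Leftrightarrow\rm{(iii)}$ of Theorem \ref{t3}, applied with $X^{\ast}$ in place of $X$, gives $PwC_{p}(Z,X^{\ast})=L(Z,X^{\ast})$ for every Banach space $Z$; in particular $T^{\ast}\in PwC_{p}(Y^{\ast},X^{\ast})$, that is, $T^{\ast}$ is a pseudo weakly compact operator of order $p$.

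Next I would invoke Theorem \ref{t26} for the space $Y$, which has the $p$-$(SR^{\ast})$ property (condition $\rm{(iii)}$ there). The implication $\rm{(iii)}\Rightarrow\rm{(i)}$ says that for every Banach space $Z$, any operator $S:Z\rightarrow Y$ with $S^{\ast}$ pseudo weakly compact of order $p$ is weakly compact. Taking $Z=X$ and $S=T$, the hypothesis on $S^{\ast}=T^{\ast}$ is exactly what the previous paragraph supplies, so $T$ is weakly compact, and we are done. (Equivalently, one can argue directly via Theorem \ref{t24}, with the roles of $X$ and $Y$ interchanged: $T^{\ast}$ being pseudo weakly compact of order $p$ is equivalent to $T$ carrying bounded subsets of $X$ onto $p$-Right$^{\ast}$ subsets of $Y$; hence $T(B_{X})$ is a $p$-Right$^{\ast}$ set in $Y$, and the $p$-$(SR^{\ast})$ property of $Y$ forces it to be relatively weakly compact, whence $T\in W(X,Y)$.) I do not expect any substantive obstacle; the only thing to get right is the bookkeeping of which space plays which role in the cited results.
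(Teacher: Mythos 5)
Your proposal is correct and follows essentially the same route as the paper: the paper likewise verifies that $T^{\ast}\in PwC_{p}(Y^{\ast},X^{\ast})$ (doing so directly from the definition rather than by citing Theorem \ref{t3}(iii), which amounts to the same thing) and then applies Theorem \ref{t26} to conclude $T\in W(X,Y)$. Your careful bookkeeping of which space plays which role is exactly right, and the parenthetical alternative via Theorem \ref{t24} is just an unwinding of the same argument.
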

\begin{proof}
 Let $ T\in L(X,Y) $ and $ (y^{\ast}_{n})_{n} $ be a $ p $-Right null sequence in $ Y^{\ast} .$\ It is clear that $ (T^{\ast}y^{\ast}_{n})_{n} $ is a $ p $-Right null sequence in $ X^{\ast} .$\ Since $  X ^{\ast} $ has the $ p $-$ (DPrcP),$
$ \Vert T^{\ast}y^{\ast}_{n} \Vert\rightarrow 0.$\ Therefore, $ T^{\ast}\in PwC_{p}(Y^{\ast}, X^{\ast}). $\ Hence, Theorem \ref{t26} implies that $ T\in W(X, Y) .$\
\end{proof}

Here, we give elementary operator theoretic characterization of weak precompactness for $ p $-Right$ ^{\ast} $ sets.\ Since the proof of the following result is similar to the proof of
Theorem \ref{t26}, we omit its proof
\begin{theorem}\label{t27}
Let $ X $ be a Banach space.\ The following statements are equivalent:\\
$\rm{(i)}$ For every Banach space $ Y, $ if $ T : Y \rightarrow X $ is an operator such that $ T^{\ast} $ is a pseudo weakly compact operator of order $ p, $
then $ T $ is weakly precompact,\\
$\rm{(ii)}$ Same as $\rm{(i)}$ with $ Y=\ell_{1} ,$\\
$\rm{(iii)}$ $ X  $ has the $ p $-$(wSR^{\ast}) $ property.\
\end{theorem}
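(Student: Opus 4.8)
The plan is to establish the cycle of implications $\rm{(i)}\Rightarrow\rm{(ii)}\Rightarrow\rm{(iii)}\Rightarrow\rm{(i)}$, following the proof of Theorem \ref{t26} almost verbatim but replacing ``weakly compact'' by ``weakly precompact'' throughout (equivalently, ``relatively weakly compact'' by ``weakly precompact'' and ``weakly convergent subsequence'' by ``weakly Cauchy subsequence''). The implication $\rm{(i)}\Rightarrow\rm{(ii)}$ is simply the specialization to $Y=\ell_{1}$ and requires nothing.

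For $\rm{(ii)}\Rightarrow\rm{(iii)}$, I would take a $p$-Right$^{\ast}$ subset $K$ of $X$ together with an arbitrary sequence $(x_{n})_{n}$ in $K$, and define $T:\ell_{1}\rightarrow X$ by $T(b)=\sum_{i}b_{i}x_{i}$, so that $T^{\ast}:X^{\ast}\rightarrow\ell_{\infty}$ is given by $T^{\ast}(x^{\ast})=(x^{\ast}(x_{i}))_{i}$. For any $p$-Right null sequence $(x^{\ast}_{n})_{n}$ in $X^{\ast}$, the defining property of $K$ gives
$$\lim_{n}\Vert T^{\ast}(x^{\ast}_{n})\Vert=\lim_{n}\sup_{i}\vert x^{\ast}_{n}(x_{i})\vert=0,$$
so $T^{\ast}\in PwC_{p}(X^{\ast},\ell_{\infty})$. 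Hypothesis $\rm{(ii)}$ then forces $T$ to be weakly precompact, whence $(T(e^{1}_{n}))_{n}=(x_{n})_{n}$ admits a weakly Cauchy subsequence. Since $(x_{n})_{n}$ was arbitrary in $K$, the set $K$ is weakly precompact, i.e.\ $X$ has the $p$-$(wSR^{\ast})$ property.

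For $\rm{(iii)}\Rightarrow\rm{(i)}$, let $T:Y\rightarrow X$ be an operator with $T^{\ast}\in PwC_{p}(X^{\ast},Y^{\ast})$. For every $p$-Right null sequence $(x^{\ast}_{n})_{n}$ in $X^{\ast}$ and every $y\in B_{Y}$ one has $\vert x^{\ast}_{n}(T(y))\vert\leq\Vert T^{\ast}(x^{\ast}_{n})\Vert\rightarrow 0$, so $T(B_{Y})$ is a $p$-Right$^{\ast}$ subset of $X$; by $\rm{(iii)}$ it is weakly precompact, i.e.\ $T$ is weakly precompact, closing the cycle.

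I do not anticipate a genuine obstacle here, as the scheme of Theorem \ref{t26} transfers directly; the only point needing care is in $\rm{(ii)}\Rightarrow\rm{(iii)}$, where the codomain of $T^{\ast}$ must be taken to be $\ell_{\infty}$ (not $c_{0}$) so that $T^{\ast}$ is well defined for an arbitrary bounded sequence $(x_{n})_{n}$, together with the observation that ``weakly precompact'' is precisely weak $\infty$-precompactness (existence of weakly Cauchy subsequences), which is exactly what is produced by the weak precompactness of $T$ applied to the canonical basis of $\ell_{1}$.
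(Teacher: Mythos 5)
Your proposal is correct and matches exactly what the paper intends: the paper explicitly omits the proof of this theorem, stating it is obtained from the proof of Theorem \ref{t26} by the same substitutions you make (weak compactness replaced by weak precompactness, weakly convergent subsequences by weakly Cauchy ones). Your reconstruction of the cycle $\rm{(i)}\Rightarrow\rm{(ii)}\Rightarrow\rm{(iii)}\Rightarrow\rm{(i)}$, including the operator $T:\ell_{1}\rightarrow X$ with $T^{\ast}$ landing in $\ell_{\infty}$, is precisely the intended argument and is sound.
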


\begin{theorem}\label{t28}
A Banach space $ X $ has the $ p $-$ (SR^{\ast}) $ property
if and only if
any closed separable subspace of $ X $ has this property.
\end{theorem}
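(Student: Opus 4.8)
The plan is to prove both implications, relying on two routine facts: the weak topology of a norm-closed subspace $Z\subseteq X$ is the restriction of the weak topology of $X$, and a bounded linear operator $S\colon E\to F$ carries weakly $p$-summable sequences to weakly $p$-summable sequences and Dunford-Pettis sets to Dunford-Pettis sets (the latter because weakly compact operators form an operator ideal, so if $A\subseteq E$ is Dunford-Pettis and $T\colon F\to W$ is weakly compact, then $T\circ S$ is weakly compact, whence $TS(A)=T(S(A))$ is relatively norm compact). Consequently the image of a $p$-Right null sequence under a bounded linear operator is again $p$-Right null.

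For the forward implication, suppose $X$ has the $p$-$(SR^{\ast})$ property and let $Z$ be a closed (in particular, separable) subspace of $X$. I would show that every $p$-Right$^{\ast}$ set $K$ in $Z$ is already a $p$-Right$^{\ast}$ set in $X$. Let $q\colon X^{\ast}\to Z^{\ast}$ be the norm-one restriction map and let $(x^{\ast}_{n})_{n}$ be a $p$-Right null sequence in $X^{\ast}$; then $(q(x^{\ast}_{n}))_{n}$ is $p$-Right null in $Z^{\ast}$ by the remark above, so $\sup_{x\in K}\vert x^{\ast}_{n}(x)\vert=\sup_{x\in K}\vert q(x^{\ast}_{n})(x)\vert\to 0$ since $x\in K\subseteq Z$. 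Thus $K$ is a $p$-Right$^{\ast}$ set in $X$, hence relatively weakly compact in $X$; because $Z$ is convex and norm-closed it is weakly closed, so the weak closure of $K$ lies in $Z$, and since the weak topology of $Z$ is induced from that of $X$, $K$ is relatively weakly compact in $Z$. Hence $Z$ has the $p$-$(SR^{\ast})$ property.

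For the converse I would argue by contradiction. Assume every closed separable subspace of $X$ has the $p$-$(SR^{\ast})$ property, but $X$ does not; fix a $p$-Right$^{\ast}$ set $K$ in $X$ and a sequence $(x_{n})_{n}$ in $K$ with no weakly convergent subsequence, and put $Y:=[x_{n}:n\in\mathbb{N}]$. Exactly as in the proof of Theorem \ref{t3}, \cite[Theorem 1.6]{hm} furnishes a separable subspace $Z$ of $X$ with $Y\subseteq Z$ and an isometric embedding $j\colon Z^{\ast}\to X^{\ast}$ such that $j(z^{\ast})(z)=z^{\ast}(z)$ for all $z\in Z$. Given any $p$-Right null sequence $(z^{\ast}_{n})_{n}$ in $Z^{\ast}$, the sequence $(j(z^{\ast}_{n}))_{n}$ is $p$-Right null in $X^{\ast}$, and since $x_{k}\in Y\subseteq Z$ we get $\sup_{k}\vert z^{\ast}_{n}(x_{k})\vert=\sup_{k}\vert j(z^{\ast}_{n})(x_{k})\vert\to 0$ because $\{x_{k}:k\in\mathbb{N}\}$ is a $p$-Right$^{\ast}$ set in $X$. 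Hence $\{x_{n}:n\in\mathbb{N}\}$ is a $p$-Right$^{\ast}$ set in $Z$; it is not relatively weakly compact in $Z$, since $(x_{n})_{n}$ has no weakly convergent subsequence in $X$ and the weak topology of $Z$ is the restriction of that of $X$. This contradicts the fact that the closed separable subspace $Z$ has the $p$-$(SR^{\ast})$ property, completing the proof.

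The main obstacle is the converse, and within it the lifting of a $p$-Right null sequence from $Z^{\ast}$ back to $X^{\ast}$: an arbitrary Hahn-Banach extension of functionals from $Z$ to $X$ need not be weakly $p$-summable in $X^{\ast}$, so one genuinely needs the restriction-compatible isometric embedding $j\colon Z^{\ast}\to X^{\ast}$ supplied by \cite[Theorem 1.6]{hm}; being a bounded operator, it automatically transports both weak $p$-summability and the Dunford-Pettis property of the sequence. All remaining points are direct consequences of the operator-ideal property of weakly compact operators and of the compatibility of weak topologies with norm-closed subspaces.
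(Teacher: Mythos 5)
Your proof is correct and follows essentially the same route as the paper: the forward direction is the paper's one-line observation (a $p$-Right$^{\ast}$ subset of a closed subspace $Z$ is a $p$-Right$^{\ast}$ subset of $X$) made precise, and the converse rests on the same key tool, the restriction-compatible isometric embedding $j\colon Z^{\ast}\to X^{\ast}$ from \cite[Theorem 1.6]{hm}, used to transport $p$-Right null sequences from $Z^{\ast}$ into $X^{\ast}$. The only real difference is that you run the converse directly --- showing $\lbrace x_{n}\rbrace$ is a $p$-Right$^{\ast}$ set in $Z$ that cannot be relatively weakly compact there --- which lets you bypass the paper's intermediate steps of first proving $X$ weakly sequentially complete via \cite[Corollary 18]{g6} and invoking Rosenthal's $\ell_{1}$-theorem, and you make explicit (via the ideal property of weakly compact operators) why $j$ preserves $p$-Right nullness, a point the paper leaves implicit.
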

\begin{proof}
If $ X$ has the  $ p $-$ (SR^{\ast}) $ property,
then any closed subspace $ Y $ of $ X $ has the same property,
since any $ p $-Right$ ^{\ast} $ subset of $ Y $ is also a $ p $-Right$ ^{\ast} $ subset of $ X. $\\
Conversely, suppose that any closed separable subspace of $ X$ has the $ p $-$ (SR^{\ast})$ property.\
Let $ (x_{n})_{n} $ be a weakly Cauchy sequence in $ X. $\ Therefore $ (x_{n})_{n} $ is also weakly Cauchy in
$ [x_{n} : n \in \mathbb{N}], $ the closed linear span of $ \lbrace x_{n} : n \in \mathbb{N}\rbrace. $\ Since $ [x_{n} : n \in \mathbb{N}] $ has the  $ p $-$ (SR^{\ast})$ property,
it has the $ (SR^{\ast}) $ property.\ Hence $ [x_{n} : n \in \mathbb{N}] $ is weak sequentially complete (see {\rm (\cite[Corollary 18]{g6})}).\ Therefore $ (x_{n})_{n} $ is
weakly convergent, and thus $ X$ is weak sequentially complete.\
Let $ K $ be a subset of $ X$ which is not relatively weakly compact.\ We show that $ K $
is not a $ p $-Right$ ^{\ast}$ subset of $ X. $\ Let $ (x_{n})_{n} $ be a sequence in $ K $ with no weakly convergent
subsequence.\ Since $ X $ is weak sequentially complete, $ (x_{n})_{n} $ has no weakly Cauchy
subsequence.\ By Rosenthal’s $ \ell_{1} $-theorem, $ (x_{n})_{n} $ is equivalent to the unit vector basis of
$ \ell_{1}. $\ Let $ X_{0} = [x_{n} : n \in \mathbb{N}] $ be a closed linear span of $ (x_{n})_{n}. $\ Note that $ X_{0} $ is a separable
subspace of $ X. $\ By (\cite[Theorem 1.6]{hm}), there is a separable subspace $ Z $ of $ X $ containing $ X_{0} $
and an isometric embedding $ J : Z^{\ast} \rightarrow X^{\ast} $ which satisfy the conditions of  (\cite[Theorem 1.6]{hm}).\
Since $ Z $ is separable, by assumption it has the $ p $-$ (SR^{\ast})$ property.\
Then $ (x_{n})_{n} $ is not a $ p $-Right$ ^{\ast} $
subset of $ Z. $\ Hence there is a $ p $-Right null sequence $ (z^{\ast}_{n})_{n} $ in $ Z^{\ast} $ and a subsequence
$ (x_{k_{n}})_{n} $ of $ (x_{n})_{n}, $ which we still denote by $ (x_{n})_{n}, $ such that $ z_{n}^{\ast}(x_{n})=1 $
for each $ n. $\ Let $ x_{n}^{\ast}=J(z^{\ast}_{n}) $
for each $ n. $\ So, $ (x^{\ast}_{n})_{n} $ is a $ p $-Right null sequence in $ X^{\ast} $ and for each $ n, $
$$x_{n}^{\ast}(x_{n})=J(z^{\ast}_{n})(x_{n})=z^{\ast}_{n} (x_{n})=1. $$
Therefore $ K$ is not a $ p $-Right$ ^{\ast} $ subset of $ X. $\
\end{proof}
\begin{theorem}\label{t29}
 Let $ (X_{n})_{n}  $  be a sequence of Banach spaces.\ If  $ 1 < r <\infty $ and $ 1 \leq p <\infty ,$
then   each  $ X_{n} $ has the $ p $-$ (SR^{\ast}) $ property if and only if $X=(\displaystyle\sum_{n=1}^{\infty}\oplus X_{n})_{\ell_{r}} $ has the same property. 
\end{theorem}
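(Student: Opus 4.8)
The plan is to mimic the structure of the proofs of Theorem \ref{t22} and Theorem \ref{t28}, treating the two implications separately.

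For the ``only if'' direction I would use that the $p$-$(SR^{\ast})$ property passes to closed subspaces: as observed inside the proof of Theorem \ref{t28}, any $p$-Right$^{\ast}$ subset of a closed subspace $Y\subseteq X$ is again a $p$-Right$^{\ast}$ subset of $X$ (restrict a $p$-Right null sequence of $X^{\ast}$ to $Y^{\ast}$; the restriction operator is bounded and $w^{\ast}$-$w^{\ast}$ continuous, hence carries Dunford-Pettis weakly $p$-summable sequences to Dunford-Pettis weakly $p$-summable sequences). Since each $X_{n}$ sits in $X$ as the closed subspace of sequences supported on the $n$-th coordinate, and relative weak compactness in a closed subspace agrees with relative weak compactness in the whole space, each $X_{n}$ inherits the $p$-$(SR^{\ast})$ property.

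For the ``if'' direction, suppose each $X_{n}$ has the $p$-$(SR^{\ast})$ property, let $K$ be a $p$-Right$^{\ast}$ subset of $X$, and put $M:=\sup\{\Vert x\Vert:x\in K\}<\infty$. By the Eberlein--\v{S}mulian theorem it suffices to extract a weakly convergent subsequence from an arbitrary sequence $(x^{(j)})_{j}\subseteq K$. First I would record that a continuous linear image of a $p$-Right$^{\ast}$ set is a $p$-Right$^{\ast}$ set: for a bounded operator $S$, the adjoint $S^{\ast}$ sends $p$-Right null sequences to $p$-Right null sequences, since adjoints preserve weak $p$-summability and carry Dunford-Pettis sets to Dunford-Pettis sets. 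Applying this to the projections $\pi_{n}:X\to X_{n}$, each $\pi_{n}(K)$ is a $p$-Right$^{\ast}$ subset of $X_{n}$, hence relatively weakly compact by hypothesis. A diagonal argument then yields a subsequence, still denoted $(x^{(j)})_{j}$, with $(\pi_{n}x^{(j)})_{j}$ converging weakly in $X_{n}$ to some $y_{n}\in X_{n}$ for every $n$.

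It remains to show that $y:=(y_{n})_{n}$ lies in $X$ and that $x^{(j)}\to y$ weakly. Weak lower semicontinuity of the norm gives $\sum_{n\le N}\Vert y_{n}\Vert^{r}\le\liminf_{j}\sum_{n\le N}\Vert\pi_{n}x^{(j)}\Vert^{r}\le M^{r}$ for every $N$, so $y\in X$ with $\Vert y\Vert\le M$. For weak convergence, fix $\phi=(\phi_{n})_{n}\in X^{\ast}\cong(\sum_{n=1}^{\infty}\oplus X_{n}^{\ast})_{\ell_{r^{\ast}}}$ and $\varepsilon>0$; since $1<r<\infty$ we have $r^{\ast}<\infty$, so there is $N$ with $(\sum_{n>N}\Vert\phi_{n}\Vert^{r^{\ast}})^{1/r^{\ast}}<\varepsilon/(3M)$, and then H\"older's inequality bounds both tails $\vert\sum_{n>N}\phi_{n}(\pi_{n}x^{(j)})\vert$ and $\vert\sum_{n>N}\phi_{n}(y_{n})\vert$ by $\varepsilon/3$ uniformly in $j$, while the finite head $\sum_{n\le N}\phi_{n}(\pi_{n}x^{(j)})$ converges to $\sum_{n\le N}\phi_{n}(y_{n})$; hence $\phi(x^{(j)})\to\phi(y)$, and $(x^{(j)})_{j}$ converges weakly to $y$. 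I expect the only genuinely delicate point to be this last tail estimate: it is exactly where $1<r<\infty$ enters (the argument fails for $r=\infty$, where one would additionally need a uniform decay condition on $K$), and it is what upgrades coordinatewise relative weak compactness of $K$ to relative weak compactness in the $\ell_{r}$-sum. One could alternatively invoke a lemma on relative weak compactness in $\ell_{r}$-sums analogous to Lemma 3.4 of \cite{ccl1}, but the self-contained estimate seems cleanest.
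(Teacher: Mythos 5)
Your proof is correct and follows essentially the same route as the paper: the ``only if'' direction via heredity of the $p$-$(SR^{\ast})$ property to closed subspaces (the paper cites Theorem \ref{t28} for this), and the ``if'' direction by observing that each $\pi_{n}(K)$ is a $p$-Right$^{\ast}$ set, hence relatively weakly compact in $X_{n}$. The only difference is that where you carry out the diagonal-plus-H\"older tail estimate by hand, the paper simply invokes Lemma 3.4 of \cite{ccl1} for the fact that a bounded subset of an $\ell_{r}$-sum ($1<r<\infty$) with relatively weakly compact coordinate projections is relatively weakly compact --- exactly the alternative you mention at the end of your argument.
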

\begin{proof}  
Theorem \ref{t28} shows that if $X=(\displaystyle\sum_{n=1}^{\infty}\oplus X_{n})_{\ell_{r}} $ has the $ p $-$ (SR^{\ast})$ roperty, then  each  $ X_{n} $ has this property.\   Conversely, 
let $  K$ be a $ p $-Right$ ^{\ast} $ subset of $ X. $\ It is clear that each $ \pi_{n}(K) $ is
also a $ p $-Right set.\ Since $ X_{n} $ has the $ p $-$(SR^{\ast}) $  property for each $ n\in\mathbb{N}, $ each $ \pi_{n}(K) $ is relatively weakly compact.\ It
follows from Lemma 3.4 \cite{ccl1}  that $  K$ is relatively weakly compact.\
\end{proof}
\begin{theorem}\label{t30}
$\rm{(i)}$ If $ X $ has the $p  $-$ (wSR^{\ast}) $ property and $ Y$ has the $ p $-$ (SR^{\ast}) $ property, 
then $ K_{w^{\ast}}(X^{\ast},Y) ,$
in particular
$ X \widehat{\bigotimes}_{\varepsilon} Y $ has the $ p $-$ (wSR^{\ast}) $ property.\\
$\rm{(ii)}$ $ K_{w^{\ast}}(X^{\ast},Y) $ has the $ p $-$ (SR^{\ast})$ property
if and only if it is weak sequentially complete 
and $ X$ and $ Y $ have the $  p $-$ (SR^{\ast}) $ property.
\end{theorem}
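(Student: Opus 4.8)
The plan is to run everything through the two natural evaluation operators on $K_{w^\ast}(X^\ast,Y)$ together with the permanence of $p$-Right$^{\ast}$ sets under bounded linear maps. First I would record two routine facts, parallel to what the paper already does for $p$-Right sets: (a) a bounded linear image of a $p$-Right$^{\ast}$ set is again a $p$-Right$^{\ast}$ set (a bounded linear image of a Dunford--Pettis set is Dunford--Pettis and of a weakly $p$-summable sequence is weakly $p$-summable, so adjoints carry $p$-Right null sequences to $p$-Right null sequences); and (b) consequently a $p$-Right$^{\ast}$ subset of a closed subspace $Z\subseteq W$ is a $p$-Right$^{\ast}$ subset of $W$, so both the $p$-$(SR^{\ast})$ and the $p$-$(wSR^{\ast})$ property pass to closed subspaces (for $p$-$(SR^{\ast})$ this is the first line of the proof of Theorem \ref{t28}). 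I will also use that $X\widehat{\bigotimes}_{\varepsilon}Y$ sits isometrically as a closed subspace of $K_{w^\ast}(X^\ast,Y)$, and that $X$ and $Y$ each embed isometrically into $K_{w^\ast}(X^\ast,Y)$ as the ranges of the rank-one embeddings $x\mapsto x\otimes y_{0}$ and $y\mapsto x_{0}\otimes y$ (for fixed norm-one $y_{0}\in Y$, $x_{0}\in X$), with $\psi_{y_{0}^{\ast}}$, respectively $\phi_{x_{0}^{\ast}}$, acting as retractions.

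For $\rm{(i)}$, let $H$ be a $p$-Right$^{\ast}$ subset of $K_{w^\ast}(X^\ast,Y)$ and $(T_{n})_{n}$ a sequence in $H$. For each $x^{\ast}\in X^{\ast}$ the point evaluation $\phi_{x^{\ast}}:T\mapsto T(x^{\ast})$ is a bounded linear operator into $Y$, so $\{T_{n}(x^{\ast}):n\in\mathbb{N}\}$ is a $p$-Right$^{\ast}$ set in $Y$ and hence relatively weakly compact, since $Y$ has the $p$-$(SR^{\ast})$ property. For each $y^{\ast}\in Y^{\ast}$ the map $\psi_{y^{\ast}}:T\mapsto T^{\ast}(y^{\ast})$ takes values in $X$ (by $w^{\ast}$-to-$w$ continuity of the elements of $K_{w^\ast}(X^\ast,Y)$) and is bounded linear, so $\{T_{n}^{\ast}(y^{\ast}):n\in\mathbb{N}\}$ is a $p$-Right$^{\ast}$ set in $X$ and hence weakly precompact, since $X$ has the $p$-$(wSR^{\ast})$ property. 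Feeding this into the description of weak precompactness in spaces of compact operators of the type used for relative weak compactness in Theorem \ref{t18} (the weakly precompact analogue of \cite[Theorem 4.8]{g17}), $H$ is weakly precompact. Thus $K_{w^\ast}(X^\ast,Y)$ has the $p$-$(wSR^{\ast})$ property, and the assertion for $X\widehat{\bigotimes}_{\varepsilon}Y$ follows from the subspace permanence in (b).

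For $\rm{(ii)}$, the implication $(\Leftarrow)$ is short: if $X$ and $Y$ have the $p$-$(SR^{\ast})$ property then they have the $p$-$(wSR^{\ast})$ property, so by $\rm{(i)}$ every $p$-Right$^{\ast}$ subset of $K_{w^\ast}(X^\ast,Y)$ is weakly precompact; together with weak sequential completeness and the Eberlein--\v{S}mulian theorem this forces every such set to be relatively weakly compact, i.e.\ $K_{w^\ast}(X^\ast,Y)$ has the $p$-$(SR^{\ast})$ property. For $(\Rightarrow)$, assume $K_{w^\ast}(X^\ast,Y)$ has the $p$-$(SR^{\ast})$ property. Since $X$ and $Y$ embed as closed subspaces, by (b) (Theorem \ref{t28}) both have the $p$-$(SR^{\ast})$ property. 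Moreover the $p$-$(SR^{\ast})$ property implies the $(SR^{\ast})$ property (every Right$^{\ast}$ set is a $p$-Right$^{\ast}$ set), so by Theorem \ref{t28} every closed separable subspace of $K_{w^\ast}(X^\ast,Y)$ has the $(SR^{\ast})$ property, hence is weakly sequentially complete by \cite[Corollary 18]{g6}; as weak sequential completeness is separably determined, $K_{w^\ast}(X^\ast,Y)$ is weakly sequentially complete.

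The step I expect to be the real obstacle is the lemma invoked in $\rm{(i)}$: passing from ``$\{T_{n}(x^{\ast})\}$ relatively weakly compact for every $x^{\ast}$ and $\{T_{n}^{\ast}(y^{\ast})\}$ weakly precompact for every $y^{\ast}$'' to ``$(T_{n})_{n}$ has a weakly Cauchy subsequence'', since one cannot diagonalise over the (possibly nonseparable) duals $X^{\ast}$ and $Y^{\ast}$. This will require either quoting the weak-precompactness version of the operator-space result used for Theorem \ref{t18}, or a Rosenthal $\ell_{1}$ argument: assuming $(T_{n})_{n}$ equivalent to the unit vector basis of $\ell_{1}$ and extracting a contradiction with the relative weak compactness of the ranges $\{T_{n}(x^{\ast})\}$ and the weak precompactness of the ``co-ranges'' $\{T_{n}^{\ast}(y^{\ast})\}$. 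The remaining steps are bookkeeping with the two evaluation operators and the already-available permanence properties of $p$-Right$^{\ast}$ sets.
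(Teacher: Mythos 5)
Your proposal matches the paper's argument essentially step for step: part (i) is run through the two evaluation maps $T\mapsto T(x^{\ast})$ and $T\mapsto T^{\ast}(y^{\ast})$ together with the permanence of $p$-Right$^{\ast}$ sets under bounded linear images, and the gluing step you flag as the real obstacle is exactly what the paper handles by citing the weak-precompactness criterion for subsets of $K_{w^{\ast}}(X^{\ast},Y)$ (\cite[Theorem 26]{g6}); part (ii) is the same combination of (i), weak sequential completeness, and the observation that $p$-$(SR^{\ast})$ implies $(SR^{\ast})$ and hence weak sequential completeness via \cite[Corollary 18]{g6}. The only cosmetic difference is that you route the weak sequential completeness in (ii) through separable subspaces and Theorem \ref{t28}, where the paper applies the cited corollary directly to the whole space.
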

\begin{proof}
$\rm{(i)}$ Suppose that $X$ has the $ p $-$ (wSR^{\ast}) $ property and $ Y $ has the $ p $-$ (SR^{\ast}) $ property.
Let $ H $ be a
$ p $-Right$ ^{\ast} $ subset of $ K_{w^{\ast}}(X^{\ast},Y) .$\
For fixed $ x^{\ast}\in X^{\ast} $ the map $ T \rightarrow T (x^{\ast}) $ is a bounded
operator from $ K_{w^{\ast}}(X^{\ast},Y) $ into $ Y. $\ It is easily verified that continuous linear images
of $ p $-Right$ ^{\ast} $ sets are $ p $-Right$ ^{\ast} $ sets.\ Therefore $ H(x^{\ast}) $ is a $ p $-Right$ ^{\ast}$ subset of $ Y, $ and so  $ H(x^{\ast}) $ is relatively
weakly compact.\ For fixed $ y^{\ast}\in Y^{\ast}, $ the map $ T \rightarrow T^{\ast}(y^{\ast}) $ is a bounded operator from
$ K_{w^{\ast}}(X^{\ast},Y) $ into $ X. $\ Therefore $ H^{\ast}(y^{\ast}) $ is a $ p $-Right$ ^{\ast}$ subset of 
$ X $ and so, $ H^{\ast}(y^{\ast}) $ is weakly precompact.\ 
By {\rm (\cite[Theorem 26 ]{g6})}, $ H$ is weakly precompact.\ Hence, $ K_{w^{\ast}}(X^{\ast},Y) $ has the $ p $-$ (wSR^{\ast}) $ property.\\
Since a closed subspace of a space with property $ p $-$ (wSR^{\ast}) $ has the same
property, $ X \widehat{\bigotimes}_{\varepsilon} Y $ has the   $ p$-$ (wSR^{\ast}) $ property.\\
$ \rm{(ii)} $ Suppose that $ K_{w^{\ast}}(X^{\ast},Y) $
has the $ p $-$ (SR^{\ast}) $ property.\ It is clear that $ X $ and $ Y$ have the $ p $-$ (SR^{\ast}) $ property.\ Also,
$ K_{w^{\ast}}(X^{\ast},Y) $
has the $ (SR^{\ast}) $ property and so $ K_{w^{\ast}}(X^{\ast},Y) $ is weakly sequentially complete {\rm (\cite[Corollary 18 ]{g6})}.\ 
Conversely, suppose $ X $ and $ Y$ have the $ p $-$ (SR^{\ast}) $ property.\ Let $ H $ be a $ p $-Right$ ^{\ast} $ subset of
$ K_{w^{\ast}}(X^{\ast},Y) .$\ By $ \rm{(i)}, $ $ H $ is weakly precompact.\ Since $ K_{w^{\ast}}(X^{\ast},Y) $ is weakly
sequentially complete, $ H $ is relatively weakly compact.
\end{proof}

\end{document}